\date{}
\crefname{appsec}{Appendix}{Appendices}
\theoremstyle{plain}
\newtheorem{thm}{\protect\theoremname}
\crefname{thm}{Theorem}{Theorems}
\theoremstyle{definition}
\theoremstyle{plain}
\newtheorem{prop}[thm]{\protect\propositionname}
\newtheorem{lem}[thm]{\protect\lemmaname}
\crefname{lem}{Lemma}{Lemmas}
\theoremstyle{definition}
\theoremstyle{plain}
\theoremstyle{plain}
\let\originalleft\left
\let\originalright\right
\renewcommand{\left}{\mathopen{}\mathclose\bgroup\originalleft}
\renewcommand{\right}{\aftergroup\egroup\originalright}
\renewcommand*{\UrlTildeSpecial}{%
  \do\~{%
    \mbox{%
      \fontfamily{ptm}\selectfont
      \textasciitilde
    }%
  }%
}%
\let\Url@force@Tilde\UrlTildeSpecial
\tikzstyle{vertex}=[circle,draw=black,fill=black,inner sep=0,minimum size=0.2cm,text=white,font=\footnotesize]
\tikzset{every loop/.style={min distance=50,in=50,out=130,looseness=7}}
\global\long\def\mk#1{\textcolor{blue}{\textbf{[MK comments:} #1\textbf{]}}}
\def\mk#1{}
\providecommand{\claimname}{Claim}
\providecommand{\conjecturename}{Conjecture}
\providecommand{\corollaryname}{Corollary}
\providecommand{\definitionname}{Definition}
\providecommand{\examplename}{Example}
\providecommand{\factname}{Fact}
\providecommand{\lemmaname}{Lemma}
\providecommand{\propositionname}{Proposition}
\providecommand{\remarkname}{Remark}
\providecommand{\theoremname}{Theorem}
\begin{document}
\title{
The random $k$-matching-free process}

\author{Michael Krivelevich\thanks{School of Mathematical Sciences, Raymond and Beverly Sackler Faculty
of Exact Sciences, Tel Aviv University, Tel Aviv 6997801, Israel. Email: \href{krivelev@post.tau.ac.il} {\nolinkurl{krivelev@post.tau.ac.il}}. Research supported in part by USA-Israel BSF grant 2014361 and by ISF grant 1261/17.}\and Matthew Kwan\thanks{Department of Mathematics, ETH, 8092 Z\"urich, Switzerland. Email: \href{mailto:matthew.kwan@math.ethz.ch} {\nolinkurl{matthew.kwan@math.ethz.ch}}.}\and
Po-Shen Loh\thanks{Department of Mathematical Sciences, Carnegie Mellon University, Pittsburgh,
PA 15213. Email: \href{mailto:ploh@cmu.edu} {\nolinkurl{ploh@cmu.edu}}.
Research supported by NSF Grant DMS-1201380 and by NSF CAREER Grant DMS-1455125.}\and Benny Sudakov\thanks{Department of Mathematics, ETH, 8092 Z\"urich, Switzerland. Email:
\href{mailto:benjamin.sudakov@math.ethz.ch} {\nolinkurl{benjamin.sudakov@math.ethz.ch}}. Research supported in part by SNSF grant 200021-175573.}}

\maketitle
\global\long\def\QQ{\mathbb{Q}}
\global\long\def\E{\mathbb{E}}
\global\long\def\Var{\operatorname{Var}}
\global\long\def\CC{\mathbb{C}}
\global\long\def\NN{\mathbb{N}}
\global\long\def\ZZ{\mathbb{Z}}
\global\long\def\GG{\mathbb{G}}
\global\long\def\supp{\operatorname{supp}}
\global\long\def\one{\boldsymbol{1}}
\global\long\def\range#1{\left[#1\right]}
\global\long\def\d{\operatorname{d}\!}
\global\long\def\falling#1#2{\left(#1\right)_{#2}}
\global\long\def\im{\operatorname{im}}
\global\long\def\sp{\operatorname{span}}
\global\long\def\rank{\operatorname{rank}}
\global\long\def\sign{\operatorname{sign}}
\global\long\def\mod{\operatorname{mod}}
\global\long\def\id{\operatorname{id}}
\global\long\def\tr{\operatorname{tr}}
\global\long\def\adj{\operatorname{adj}}
\global\long\def\Unif{\operatorname{Unif}}
\global\long\def\Po{\operatorname{Po}}
\global\long\def\Bin{\operatorname{Bin}}
\global\long\def\Ber{\operatorname{Ber}}
\global\long\def\Geom{\operatorname{Geom}}
\global\long\def\Exp{\operatorname{Exp}}
\global\long\def\Hom{\operatorname{Hom}}
\global\long\def\vol{\operatorname{vol}}
\global\long\def\floor#1{\left\lfloor #1\right\rfloor }
\global\long\def\ceil#1{\left\lceil #1\right\rceil }
\global\long\def\cond{\,\middle|\,}
\global\long\def\EGc{G_{\mathrm{clique}}}
\global\long\def\EGs{G_{\mathrm{star}}}
\global\long\def\all{\mathrm{all}}
\global\long\def\fix{\mathrm{fix}}
\global\long\def\sub{\mathrm{sub}}

\begin{abstract}
Let $\mathcal{P}$ be a graph property which is preserved by removal
of edges, and consider the random graph process that starts with the
empty $n$-vertex graph and then adds edges one-by-one, each chosen
uniformly at random subject to the constraint that $\mathcal{P}$
is not violated. These types of random processes have been the subject
of extensive research over the last 20 years, having striking applications
in extremal combinatorics, and leading to the discovery of important probabilistic
tools. In this paper we consider the \emph{$k$-matching-free process},
where $\mathcal{P}$ is the property of not containing a matching
of size $k$. We are able to analyse the behaviour of this process
for a wide range of values of $k$; in particular we prove that if
$k=o\left(n\right)$ or if $n-2k=o\left(\sqrt{n}/\log n\right)$ then
this process is likely to terminate in a $k$-matching-free graph
with the maximum possible number of edges, as characterised by Erd\H os
and Gallai. We also show that these bounds on $k$ are essentially
best-possible, and we make a first step towards understanding the
behaviour of the process in the intermediate regime. 
\end{abstract}

\section{Introduction}

Following Erd\H os and R\'enyi's seminal papers
on random graphs \cite{ER59,ER60}, there has been great interest in many different
kinds of random graphs and random graph processes, with broad applications
to various combinatorial problems and to real-world networks.
The most basic random graph process, introduced by Erd\H os and R\'enyi,
starts with the empty $n$-vertex graph and adds edges one-by-one,
each selected uniformly at random among the edges not used so far.
A particularly important variation of this basic process is the \emph{random
greedy} process. Here a decreasing\footnote{We say a graph property is \emph{decreasing} if it is preserved by
removal of edges, and we say a property is \emph{increasing }if it
is preserved by addition of edges.} property $\mathcal{P}$ is specified, and then edges are added to
the empty $n$-vertex graph one-by-one, chosen uniformly at random
among edges whose addition to the current graph would not violate
$\mathcal{P}$. A specific example of this type of process was first studied by Ruci\'nski
and Wormald \cite{RW} in 1992, and the idea was first discussed in full generality
by Erd\H os, Suen and Winkler \cite{ESW} in 1995.

Since then, a wide range of different types of random greedy processes
have been studied. Perhaps the most famous specific example is the
\emph{triangle-free} \emph{process}, where $\mathcal{P}$ is the property
that a graph does not contain a triangle (see for example \cite{ESW,Boh,FGM}).
More generally, much of the work on random greedy processes has focused
on cases of the \emph{$H$-free} \emph{process}, where\emph{ }$\mathcal{P}$
is the property that a graph does not contain a copy of a specified
graph $H$ (see for example \cite{BR,OT,Wol,BK,Picl,War}). The theory
of $H$-free processes has also been extended to hypergraphs (see
for example \cite{GRW,BFMRS2,BFL,KOT,BMP}). We remark that in all
the aforementioned results $H$ is a fixed ``small'' (hyper)graph,
whose size does not depend on $n$, and therefore the property of
being $H$-free is in some sense a ``local'' constraint. Much less
is known about random greedy processes for more ``global'' properties
$\mathcal{P}$; two notable exceptions are the random greedy\emph{
planar graph process} \cite{GSST}, and the random greedy \emph{$k$-colourable
process} \cite{ESW,KSV}.

There are a variety of different questions one can ask about random
greedy processes. Commonly, one asks about the size and structure
of the final (or almost-final) outcome of such a process. The process
may a.a.s.\footnote{By ``asymptotically almost surely'', or ``a.a.s.'', we mean that
the probability of an event is $1-o\left(1\right)$. Here and for
the rest of the paper, asymptotics are as $n\to\infty$.}\ ``saturate'' and result in a graph with (almost) the maximum possible
number of edges permitted by $\mathcal{P}$, or it may a.a.s.\ result
in a graph with special properties that are useful for applications.
Examples of the former situation include the \emph{bounded-degree
process} \cite{RW} that pioneered the study of random greedy processes,
and the \emph{triangle removal process }\cite{BFL}, which has become
an important tool in the study of Steiner triple systems \cite{Kee,Kwa}.
A celebrated example of the latter situation is the triangle-free
process, which a.a.s.\ produces triangle-free graphs with no large
independent set; analysis of this process led to important breakthroughs
in Ramsey theory \cite{Boh,FGM,BK2}. There are also situations where
the \emph{intermediate} states of a random greedy process are of particular
interest; for example, the intermediate stages of the random \emph{satisfiable
process} \cite{KSV} are a good source of satisfiable formulas with
certain unique properties.

In this paper we take a first look at the behaviour of the $H$-free
process for an important choice of $H$ with non-fixed size. A \emph{$k$-matching}
is a union of $k$ disjoint edges. For any $k$ (which may depend
on $n$), the \emph{$k$-matching-free} \emph{process} is formally
defined as follows. Let $N={n \choose 2}$, and let $e\left(1\right),\dots,e\left(N\right)$
be a uniformly random ordering of the unordered pairs in ${\range n \choose 2}$
(that is, a random ordering of the edges of the complete graph $K_{n}$). This is the distribution obtained by iteratively selecting each $e(t)$ uniformly at random from the previously unseen edges.
Let $G\left(0\right)$ be the empty $n$-vertex graph, and for $1\le t\le N$
define 
\[
G\left(t\right)=\begin{cases}
G\left(t-1\right) & \text{if }G\left(t-1\right)+e\left(t\right)\text{ contains a }k\text{-matching};\\
G\left(t-1\right)+e\left(t\right) & \text{otherwise.}
\end{cases}
\]
In the former case we say $e\left(t\right)$ is \emph{rejected} and
in the latter case we say it is \emph{accepted}. The outcome of this
random process is a $k$-matching-free graph $G\left(N\right)$ which is $k$-matching-\emph{saturated}, meaning that the addition of any edge would create a $k$-matching. We remark that the general notion of saturation in graphs and hypergraphs is of broad interest; see for example the surveys of Bollob\'as \cite[Section~3]{BolChapter} and Faudree, Faudree and Schmitt \cite{FFS}.

The general problem of determining whether a graph is $k$-matching-free
(or, basically equivalently, the problem of determining the size of the largest
matching in a graph) is of broad importance in various different areas
of mathematics, computer science and even computational chemistry.
One of the most basic results in this area is due to Erd\H os and
Gallai \cite{EG}, who proved that the maximum possible number of
edges in a $k$-matching-free $n$-vertex graph is
\[
\max\left\{ {2k-1 \choose 2},\;{k-1 \choose 2}+\left(k-1\right)\left(n-k+1\right)\right\} .
\]
This result falls under the umbrella of \emph{extremal graph theory},
one of the central branches of modern combinatorics (see for example
the book of Bollob\'as \cite{BolEGT}). Up to isomorphism, the extremal
graphs that attain the Erd\H os-Gallai bound are as follows.
\begin{itemize}
\item $\EGc$ is a clique on $2k-1$ vertices with the remaining $n-2k+1$
vertices isolated.
\item $\EGs$ is a clique on $k-1$ vertices, in addition to every possible
edge between this clique and the remaining $n-k+1$ vertices. Equivalently,
$\EGs$ is a star $K_{1,n-k+1}$ with its center vertex ``blown up''
to a $\left(k-1\right)$-clique.
\end{itemize}
As our main result, we find that if $k$ is sufficiently small or
sufficiently large (i.e. sufficiently close to $n/2$), then the $k$-matching-free process a.a.s.\ produces
an Erd\H os-Gallai extremal graph, as follows.
\begin{thm}
\label{thm:o(n)}If $k=o\left(n\right)$ then a.a.s.\ $G\left(N\right)\cong\EGs$.
This is tight; if $k=\Omega\left(n\right)$ 
then $G\left(N\right)\ncong\EGs$ with probability $\Omega\left(1\right)$.
\end{thm}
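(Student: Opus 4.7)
The plan is to split the argument into two directions: showing that $G(N) \cong \EGs$ a.a.s.\ when $k = o(n)$, and showing that $G(N) \ncong \EGs$ with constant probability when $k = \Omega(n)$.

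For the $k = o(n)$ direction, I would track the process through several phases. Define the stopping time $\tau_0 := \min\{t \ge 0 : \nu(G(t)) = k-1\}$, where $\nu$ denotes the matching number. Since $\nu$ increases by at most one per accepted edge, no edge is rejected while $\nu(G(t)) < k-1$, so $G(\tau_0)$ is simply the graph of the first $\tau_0$ edges of a uniformly random ordering of $E(K_n)$. A standard moment computation shows that among the first $O(k)$ random edges, the expected number of vertex-overlapping pairs is $O(k^2/n) = o(k)$ when $k = o(n)$; hence $\tau_0 = (1+o(1))(k-1)$ a.a.s., and $G(\tau_0)$ is close to a $(k-1)$-matching $M = \{u_1v_1,\dots,u_{k-1}v_{k-1}\}$ plus $o(k)$ incident extra edges.

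For $t \ge \tau_0$ I would use the Gallai--Edmonds decomposition $V = A(G(t)) \cup C(G(t)) \cup D(G(t))$: an edge $xy$ is valid (does not create a $k$-matching) iff it is not the case that $x, y$ both lie in $D(G(t))$ and in distinct components of $G(t)[D(G(t))]$. Just after $\tau_0$ we have $A \approx \emptyset$, $C \approx V(M)$, and $D \approx V \setminus V(M)$ is a union of singletons. For each matching pair $(u_i, v_i)$, the first subsequent accepted edge incident to $\{u_i, v_i\}$ and reaching $V \setminus V(M)$ \emph{activates} the pair: one of $u_i, v_i$ moves into $A$ (a ``center''), the other into $D$ (a ``leaf''). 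With $\Theta(n)$ eligible activating edges per pair and only $k-1 = o(n)$ pairs, a union bound yields that a.a.s.\ every pair activates, producing a center set $K$ of size exactly $k-1$. Setting $L := V \setminus K$, I then claim $G(N) = \EGs$ with center $K$: any edge with an endpoint in $K$ keeps $G(t)$ a subgraph of $\EGs$ and is always accepted, while for each edge $l_1 l_2 \subseteq L$, rejection requires $G(t-1)$ to contain a $(k-1)$-matching of $K$ into $L \setminus \{l_1, l_2\}$; since $|L| = \Theta(n)$ and $|K| = o(n)$, a Chernoff/Hall argument shows that a.a.s.\ this matching exists by the time $l_1 l_2$ is considered, simultaneously for every pair.

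For the tightness direction, assume $k = \Omega(n)$ with $k \le n/2$ (if $k > n/2$ the constraint is vacuous and $G(N) = K_n \ncong \EGs$ is handled separately). Then $\tau_0 = \Theta(n)$ and $G(\tau_0)$ is a random graph with $\Theta(n)$ edges. The expected number of isolated triangles in such a graph is a positive constant, so with probability $\Omega(1)$ an isolated triangle is present; each such component has matching $1$ but vertex cover $2$, a K\"onig gap that forces the vertex-cover number of $G(\tau_0)$ to strictly exceed $\nu(G(\tau_0)) = k - 1$. Since vertex cover is edge-monotone and $\EGs$ has vertex cover exactly $k - 1$ (its clique $K$), we conclude $G(N) \ncong \EGs$ with probability $\Omega(1)$. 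The main technical obstacle in the $k = o(n)$ direction is the simultaneous Hall-type concentration for all $L$-$L$ rejections, which involves strong dependencies across $\binom{|L|}{2}$ pairs and likely requires a careful martingale or coupling argument rather than a naive union bound.
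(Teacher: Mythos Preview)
Your negative direction matches the paper's: find an isolated triangle in $G(\tau_0)$ with probability $\Omega(1)$. Your vertex-cover finish (an isolated triangle forces the vertex-cover number of $G(\tau_0)$ to be at least $k$, and this is monotone and exceeds the vertex-cover number $k-1$ of $\EGs$) is a clean alternative to the paper's degree-based argument. One gap: the claim $\tau_0 = \Theta(n)$ fails when $n - 2k = o(n)$, since then one needs a near-perfect matching and $\tau_0$ is of order $n\log n$, so isolated triangles a.a.s.\ do not survive. The paper covers that range separately with an independence-number argument.

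Your positive direction has the right skeleton but a genuine gap. First, a minor point: the Gallai--Edmonds ``iff'' is wrong in general (on a $5$-path $abcde$ one has $a,c\in D$ in distinct singleton components, yet adding $ac$ does not raise $\nu$), though you only actually use the valid implications. The real problem is the claim that every $L$--$L$ edge is rejected via a Hall condition. This is false at early times: immediately after the activating edge $u_i b_i$, the leaf-to-root edge $v_i b_i$ is \emph{accepted}, because $u_i$'s only $L$-neighbours are $v_i$ and $b_i$, so no $(k-1)$-matching avoids both; the component then becomes an isolated triangle, and by your own negative argument $G(N)\ncong\EGs$. So the task is not ``verify Hall at each step'' but ``show a.a.s.\ no such intra-component edge is offered before its centre gains a second $B$-neighbour'', and this must account for cascading, since a pair can also be activated indirectly by an edge from an already-determined leaf $v_j$ rather than from $B$. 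The paper handles exactly this: it introduces a dynamic charge/root bookkeeping, defines hitting times $\tau_v^F$ (first edge from $v$ into its own generalised component or to its root) and $\tau_{m_v}^B$ (first edge from $m_v$ to $B$ minus its root), proves that $\tau_v^F > \tau_{m_v}^B$ for all $v$ implies $G(N)\cong\EGs$, and then reduces this to showing that the susceptibility of the generalised-component partition is $o(n)$, which it controls by coupling with a simpler auxiliary process and ultimately a subcritical binomial random graph. The dependency you flag over $\binom{|L|}{2}$ pairs is not the real obstacle; the timing of these intra-component edges is.
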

\begin{thm}
\label{thm:clique}If $k=n/2-o\left(\sqrt{n}/\log n\right)$ then
a.a.s.\ $G\left(N\right)\cong\EGc$. This is essentially tight; if
$k=n/2-\omega\left(\sqrt{n}/\log n\right)$
then a.a.s.\ $G\left(N\right)\ncong\EGc$.
\end{thm}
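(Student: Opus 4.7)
It suffices to show $|U(N)| = s + 1$ a.a.s., where $U(N)$ denotes the isolated vertices of $G(N)$ and $s := n - 2k$. Indeed, $G(N)$ is $k$-matching-saturated so $\nu(G(N)) = k - 1$, and every non-edge $uv$ satisfies $\nu(G(N) - \{u, v\}) = k - 1$. This immediately forces $|U(N)| \le s + 1$: if $|U(N)| \ge s + 2$, then $|V \setminus U(N)| \le 2k - 2$, so for $u \in U(N)$ and $v \in V \setminus U(N)$ the graph $G(N) - v$ has only $\le 2k - 3$ non-isolated vertices and $\nu \le k - 2$, a contradiction. The same argument applied to non-edges with $u, v \in V \setminus U(N)$ shows that when $|U(N)| = s + 1$ the induced subgraph $G(N)[V \setminus U(N)]$ has no non-edges, hence equals $K_{2k - 1}$ and $G(N) \cong \EGc$.

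Let $t^* := \inf\{t : \nu(G(t)) = k - 1\}$. For $t < t^*$ adding any edge yields $\nu \le k - 1 < k$, so no rejection occurs and $G(t)$ is exactly the Erd\H os--R\'enyi graph $G(n, t)$. By classical results on matchings in $G(n, m)$ near the perfect matching threshold, a.a.s.\ the non-matched vertices of $G(t^*)$ are precisely its isolated vertices, so $|U(t^*)| \in \{s + 1, s + 2\}$, and $H^* := G(t^*)[V \setminus U(t^*)]$ has a (near-)perfect matching. A refined such analysis additionally shows that a.a.s.\ $H^*$ satisfies a robust matchability property: for every pair $v, w \in V \setminus U(t^*)$, the graph $H^* - \{v, w\}$ still has a (near-)perfect matching.

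After $t^*$ I track the Gallai--Edmonds decomposition of each $G(t)$. An edge within $V \setminus U(t)$ is always accepted (removing its endpoints leaves $\le 2k - 3$ non-isolated vertices and $\nu \le k - 2$), while an edge $uv$ with $u \in U(t)$ and $v \in V \setminus U(t)$ is accepted if and only if $v \in A \cup C$ of the decomposition of $G(t)$. Combined with the robust matchability of $H^*$ this yields the following dichotomy. In the case $|U(t^*)| = s + 1$, robust matchability makes $H^*$ factor-critical (i.e.\ $D(H^*) = V(H^*)$), so no edge from $U$ is ever accepted and $|U|$ remains at $s + 1$ while the non-isolated subgraph fills out to $K_{2k - 1}$. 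In the case $|U(t^*)| = s + 2$, exactly one edge $u_0 w_1$ from $U$ to $V \setminus U$ is accepted at some time $t_1$; afterwards $w_1$ is the unique vertex in $A \cup C$ until a second edge $u_0 x$ of $u_0$ is accepted, at which point $H(t) - w_1$ gains a perfect matching (by robust matchability) and $w_1$ joins $D$. Thereafter no edge from $U$ is ever accepted and $|U|$ stabilizes at $s + 1$, provided that in the uniform random edge order after $t_1$ the first edge from $\{u_0 x : x \in V \setminus U(t_1),\ x \neq w_1\}$ is offered before any edge from $\{u' w_1 : u' \in U(t_1) \setminus \{u_0\}\}$.

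The main obstacle is making this race analysis quantitative. The race alone fails with probability $O(s/n)$, yielding $o(1)$ only for $s = o(n)$; the sharper threshold $s = o(\sqrt n / \log n)$ in the statement comes from also controlling the failure probability of the robust matchability property of $H^*$, which a union bound over pairs of removed vertices and degree-one witnesses in $G(n, t^*)$ shows to be of order $s^2 (\log n)^2 / n$. Combining both error sources gives $o(1)$ failure exactly in the regime $s = o(\sqrt{n}/\log n)$. Tightness (the statement that $G(N) \ncong \EGc$ a.a.s.\ when $s = \omega(\sqrt{n}/\log n)$) is proved by exhibiting a constant-probability event, based on two vertices of $H^*$ whose removal creates an isolated vertex, which allows an additional edge from $U$ to be accepted and forces $|U|$ strictly below $s + 1$.
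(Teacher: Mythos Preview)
Your reduction to $|U(N)|=s+1$ and the idea of tracking acceptance of edges out of $U$ via the Gallai--Edmonds set $A\cup C$ of $G(t)[V\setminus U(t)]$ are correct and clean. But the heart of the argument, the ``robust matchability'' of $H^*$ for \emph{every} pair $\{v,w\}$, does not hold a.a.s.\ in the stated range. At time $t^*$ the expected number of degree-$1$ vertices in $H^*$ is $\Theta(s\log n)$, so a.a.s.\ there are at least two of them, with distinct neighbours $y_1,y_2$; removing $\{y_1,y_2\}$ isolates both and the matching falls two short. More to the point, even factor-criticality of $H^*$ (your case $|U(t^*)|=s+1$) fails at every neighbour of a degree-$1$ vertex, so there genuinely \emph{are} vertices in $A\cup C$ and an edge from $U$ to one of them would be accepted. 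The number $s^2(\log n)^2/n$ you quote is not the probability that $H^*$ fails robust matchability as a graph property (that probability tends to $1$); it matches instead the probability that, in a time window of length $\Theta(n\log n)$, an edge is offered from one of the $s+1$ isolated vertices to one of the $\Theta(s\log n)$ ``bad'' targets. But that is a different statement, and making it rigorous requires you to (a) name the exceptional set of targets and (b) prove that deleting any vertex \emph{outside} it still leaves a perfect matching. Neither step is in your plan.

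This is exactly what the paper supplies. A vertex is declared \emph{dangerous} if it neighbours a degree-$1$ vertex, or neighbours a degree-$2$ vertex which is itself within distance $2$ of another vertex of degree at most $2$; the paper then proves (Lemma~13, via a delicate greedy argument in which the degree-$2$ clause is essential) that deleting any single non-dangerous vertex from $G(\tau)\setminus W$ leaves a perfect matching, and separately (Lemma~14) that there are only $o(\sqrt n)$ dangerous vertices, so the chance of being offered an edge from $W$ to a dangerous vertex before the non-isolated part becomes Hamiltonian is $o(1)$. For the negative direction, note that the theorem asserts $G(N)\ncong\EGc$ \emph{a.a.s.}, not merely with constant probability; the paper obtains this by showing that a cherry (a $2$-path with both endpoints of degree $1$) persists a.a.s., and that an edge from $W$ to a vertex of degree at least $2$ is a.a.s.\ offered while such a cherry is still present.
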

The proofs of \cref{thm:o(n)} and \cref{thm:clique} are quite different
to each other, and involve quite different methods to those typically
used for studying $H$-free processes. In particular, we do not require
the so-called \emph{differential equation method}. The positive and
negative parts of \cref{thm:o(n)} will be proved separately in \cref{sec:o(n)}
and \cref{sec:negative-o(n)}, and \cref{thm:clique} will be proved
in \cref{sec:clique}. We remark that while we made no particular attempt to consider the case $k=n/2-\Theta\left(\sqrt{n}/\log n\right)$, we expect that our proof of \cref{thm:clique} can be modified to show that in this case $G\left(N\right)\ncong\EGc$ with probability $\Omega(1)$.

The regime where $k=\Omega\left(n\right)$ and $n-2k=\omega\left(\sqrt{n}/\log n\right)$
is significantly more challenging to study. As a first step, we show
that if $k\le\varepsilon n$ for small $\varepsilon$,
then a.a.s.\ $G\left(N\right)$ resembles $\EGs$. Observe that $\EGs$ has independence number $n-k+1$ and $k-1$ vertices of degree $n-1$.
\begin{prop}
\label{thm:eps}For all $k$, a.a.s.\ $G\left(N\right)$ has an independent
set of size $n-(1+O(k/n))k$, and at least $\left(1-O(k/n)\right)k$ vertices with degree $n-1$. 
\end{prop}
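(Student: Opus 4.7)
The plan is to analyse $G(N)$ via its Gallai--Edmonds decomposition $V=A\sqcup C\sqcup D$ (with $D$ the set of vertices exposed by some maximum matching, $A=N(D)\setminus D$, and $C$ the rest), assuming $n>2k-1$ throughout. The first observation is that the degree-$(n-1)$ vertices of $G(N)$ are exactly $A\cup C$: if $v$ has a non-neighbour $u$, then $k$-matching-saturation gives a maximum matching of $G(N)$ exposing both $u$ and $v$, so $v\in D$; conversely, any $v\in A\cup C$ with a non-edge produces the same contradiction.

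Next I would show that $|C|=0$: if an edge $xy$ of a Gallai--Edmonds maximum matching $M$ lay inside $C$, then since $\deg(x)=n-1$, we could form $M-\{xy\}+\{xu\}$ for any unmatched $u\in V\setminus V(M)$, giving a maximum matching exposing $y$ and contradicting $y\in C$. Combined with the Gallai--Edmonds identity $\nu(G(N))=(n-c(D)+|A|)/2=k-1$, this yields $c(D)=n-2k+2+|A|$, and selecting one vertex per factor-critical component of $G[D]$ gives an independent set of this size. So both parts of the proposition reduce to the single bound $|A_N|\ge (1-O(k/n))k$ a.a.s.; via the identity $|A_N|=k-1-(|D_N|-c(D_N))/2$, this is equivalent to showing that at most $O(k^2/n)$ of the $k-1$ matched pairs in any maximum matching of $G(N)$ have both endpoints of degree $<n-1$.

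To establish the latter, I would track the Gallai--Edmonds sets $A_t\cup C_t$ as the process evolves. Once $\nu$ first reaches $k-1$ at some time $T$, the set $A_t\cup C_t$ is monotonically non-increasing, since edge additions (which preserve $\nu=k-1$) only proliferate maximum matchings and enlarge $D_t$. Starting from $|A_T\cup C_T|\le 2(k-1)$ (as $A_t\cup C_t\subseteq V(M^*)$ for any max matching $M^*$), fix such an $M^*$ at time $T$: by the same argument that gives $|C|=0$, each of the $k-1$ matched pairs in $M^*$ has at most one endpoint in $A_N$, so the task is to show that at most $O(k^2/n)$ pairs have both endpoints fall into $D_N$.

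The main obstacle is this last quantitative bound. Heuristically, once one endpoint of a matched pair falls into $D$, the other becomes anchored in $A$---it is then adjacent to many $D$-vertices, and subsequent accepted edges are unlikely to create a new max matching exposing it. To formalise this, I expect to combine a per-step case analysis (each accepted edge causes only $O(1)$ vertices to ``fall'' from $A\cup C$ into $D$, with a suitable structural bound on when fallings occur) with a martingale or coupling argument that converts the per-pair probability of a double-fall into an $O(k/n)$ estimate, giving the desired $O(k^2/n)$ total by summing over the $k-1$ matched pairs.
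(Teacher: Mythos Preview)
Your Gallai--Edmonds reduction is clean and correct: the identification of the degree-$(n-1)$ vertices with $A_N\cup C_N$, the argument that $C_N=\emptyset$, the identity $|A_N|=k-1-(|D_N|-c(D_N))/2$, the monotonicity of $D_t$ once $\nu$ reaches $k-1$, and the independent-set construction all check out, and they reduce the proposition to showing that at most $O(k^2/n)$ edges of a fixed maximum matching $M^*$ have both endpoints in $D_N$.

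The gap is exactly where you flag it. Your sketched strategy for the quantitative bound does not work as stated: the claim that ``each accepted edge causes only $O(1)$ vertices to fall from $A\cup C$ into $D$'' is false. Take $G$ to be the path $v_0v_1\cdots v_{2k-2}$ together with $n-2k+1$ isolated vertices; then $\nu=k-1$, the set $A$ consists of the $k-1$ odd-indexed vertices, and $D$ is the rest. Adding the single edge $v_0v_{2k-2}$ keeps $\nu=k-1$ (so it is accepted) but produces an odd cycle, which is factor-critical; now every cycle vertex lies in $D$ and $|A|$ drops from $k-1$ to $0$ in one step. Your heuristic that ``once one endpoint falls into $D$ the other becomes anchored in $A$'' is likewise unreliable, since both endpoints of a matched pair can sit in the same factor-critical component of $G[D]$. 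Without a replacement for these claims you have a reduction but not a proof.

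The paper avoids tracking the Gallai--Edmonds sets altogether. It first shows $\tau\le(1+O(k/n))k$, so at time $\tau$ all but $O(k^2/n)$ edges of $M$ are isolated in $G^{\all}(\tau)$. For each such isolated matching edge $e$ it defines an event $E_e$ depending only on the relative order of a few edges (roughly: two edges from one endpoint $a_e^+$ of $e$ to $B$ appear before any edge from $e$ to $A$ or the ``wrong'' edge to $B$), computes $\Pr(E_e)=1-O(k/n)$ by elementary order statistics, verifies near-independence of the $E_e$ and applies Chebyshev, and finally shows by a short augmenting-path argument that $E_e$ deterministically forces $\deg_{G(N)}(a_e^+)=n-1$. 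The point is that $E_e$ lives entirely in the uniform random ordering of edges and can be analysed without ever understanding how $A_t$ evolves.
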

The proof of \cref{thm:o(n)} suggests that \cref{thm:eps} is actually far from best possible; we suspect that if $k\le \varepsilon n$ for small $\varepsilon$, the error term $O(k/n)$ can be substantially improved. However, we observe that there is in fact a range of $k$ in which $G\left(N\right)$
does not resemble any extremal $k$-matching-free graph. Observe that $\EGc$ has $n-2k+1$ isolated vertices, and as before $\EGs$ has independence number $n-k+1$.
\begin{prop}
\label{thm:not-extremal}The following hold.
\begin{enumerate}
\item[(1)]{There is a constant $c<1/2$ such that if $k\ge c n$ then a.a.s.\ $G\left(N\right)$ has independence number $n-k-\Omega\left(n\right)$.}
\item[(2)]{If  $k=\Omega(n)$ and $n-2k=\Omega(n)$ then a.a.s.\ $G\left(N\right)$ has $n-2k-\Omega(n)$ isolated
vertices.}
\end{enumerate}
\end{prop}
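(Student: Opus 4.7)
My approach for both parts rests on two simple observations. First, no edge can be rejected before the matching number of the current graph reaches $k-1$, so up to that first moment $G(t)$ is distributed as a uniform random graph $G(n,t)$. Second, both the number of isolated vertices of $G(t)$ and its independence number $\alpha(G(t))$ are non-increasing in~$t$ (since edges are only added). Thus in each part it suffices to find a time $t_{0}=\lfloor c^{*}n\rfloor$ (for an appropriate constant $c^{*}$ depending on $k/n$) such that, a.a.s., the matching number of $G(n,t_{0})$ is less than $k-1$ and the relevant invariant of $G(n,t_{0})$ is already smaller, by $\Omega(n)$, than the claimed upper bound; monotonicity then transfers the estimate to $G(N)$.

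For part~(2), I take $c^{*}$ slightly larger than $-\tfrac{1}{2}\log(1-2k/n)$, so that standard binomial concentration gives that $G(n,t_{0})$ has $(1\pm o(1))ne^{-2c^{*}}\le n-2k-\Omega(n)$ isolated vertices. The delicate condition is ensuring no rejection has occurred: although the expected number of non-isolated vertices slightly exceeds $2k$, in the sub- or near-critical regime the graph contains $\Omega(n)$ small odd-sized components (e.g.\ paths $P_{3}$), each of which contributes an additional unit of Gallai--Edmonds deficit beyond the isolated vertices. Consequently twice the matching number is smaller than the non-isolated count by $\Omega(n)$, which is enough slack to ensure that the matching number is still below~$k-1$ at time $t_{0}$.

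For part~(1), I take $c^{*}$ to be a sufficiently large absolute constant and apply a first-moment bound: setting $s=\lceil(1-k/n-\delta)n\rceil$ for small $\delta>0$,
\[
\Pr\!\left[\alpha(G(n,t_{0}))\ge s\right]\;\le\;\binom{n}{s}\bigl(1-2c^{*}/n\bigr)^{\binom{s}{2}}\;=\;o(1),
\]
provided $c^{*}>h(k/n)/(1-k/n)^{2}$, where $h$ is binary entropy in nats. For $k/n$ in a left neighbourhood of $\tfrac{1}{2}$ this threshold is at most $4\log 2$, so a universal $c^{*}$ suffices. Meanwhile, in this denser regime the matching number of $G(n,t_{0})$ concentrates at $\tfrac{n}{2}-\Theta(ne^{-2c^{*}})$, which is below $k-1$ provided $k\ge\bigl(\tfrac{1}{2}-\Theta(e^{-2c^{*}})\bigr)n$; this determines the constant $c<\tfrac{1}{2}$ in the statement.

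The main technical obstacle is to obtain matching-number estimates for $G(n,cn)$ sharp enough to guarantee simultaneously that no rejection has yet occurred and that the target invariant is already below the claimed threshold. This is especially tight for part~(2) when $k/n$ approaches $\tfrac{1}{2}$, so that $c^{*}$ is pushed into the (mildly) supercritical regime where the Gallai--Edmonds deficit beyond the isolated vertices becomes small; making the argument go through in this range requires a quantitative Karp--Sipser-type analysis of the matching number of sparse random graphs.
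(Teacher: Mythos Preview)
Your approach is correct and, for part~(1), essentially identical to the paper's: both fix $t_{0}=6n$ (or some large constant times $n$), use a first-moment bound to kill large independent sets, and bound the matching number via the $\Theta(n)$ isolated vertices so that $\tau>t_{0}$.

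For part~(2) the paper takes a cleaner route that sidesteps the balancing act you identify as ``the main technical obstacle''. Rather than trying to pin down a single time $t_{0}$ at which \emph{both} the isolated-vertex count is already below $n-2k-\Omega(n)$ \emph{and} the matching number is still below $k-1$, the paper works directly at the stopping time $\tau$. It shows (via a concentration inequality) that for \emph{every} $t$ in a window $[R_{1}n,R_{2}n]$ the graph $G^{\mathrm{all}}(t)$ has $\Omega(n)$ isolated $2$-paths, and separately that a.a.s.\ $\tau$ lands in this window. At time $\tau$ the matching number is exactly $k-1$ by definition, and each isolated $2$-path contributes a non-isolated vertex outside any maximum matching; hence the non-isolated vertex count is at least $2(k-1)+\Omega(n)$, and the isolated count is at most $n-2k-\Omega(n)$. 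No tuning of $c^{*}$ is required, and no Karp--Sipser-type input is needed: the second-moment (in fact Bernstein-type) bound on the number of isolated $P_{3}$'s is all that is used.

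Your fixed-time approach does go through, since for $c^{*}$ just above $-\tfrac{1}{2}\log(1-2k/n)$ the $P_{3}$-count $\Theta((c^{*})^{2}e^{-6c^{*}})n$ is a fixed positive multiple of $n$ (here $n-2k=\Omega(n)$ keeps $c^{*}$ bounded), so one can choose the ``slightly larger'' gap smaller than this. But the paper's device of sweeping over a range of $t$ and evaluating at $\tau$ is what makes the argument short; you may want to adopt it and drop the Karp--Sipser reference.
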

That is to say, there is a range of $\Theta(n)$ values of $k$ for which the outcome of the $k$-matching-free process is typically substantially different from the Erd\H os-Gallai extremal graphs, in the sense that edges incident to an $\Omega(1)$-proportion of its vertices must be changed to arrive at either $\EGs$ or $\EGc$. We will give simple proofs of \cref{thm:eps} and \cref{thm:not-extremal} in \cref{sec:eps} and \cref{sec:not-extremal}, respectively. Also, we remark that for \cref{thm:not-extremal}, we can take $c=1/2-e^{-13}/2$, but no effort was made to optimise this constant.

Finally, recall that a vertex cover of a graph is a set of vertices such that every edge in the graph is incident to one of the vertices of this set. The problem of finding a maximum matching in a graph is in a certain sense dual to the problem of finding a minimum vertex cover, and the matching number (maximum size of a matching) and vertex cover number (minimum size of a vertex cover) are often considered together. Therefore one might naturally consider the restricted covering process $G^\mathrm{vc}(1),\dots,G^\mathrm{vc}(N)$, where we accept an edge $e(t)$ if and only if the vertex cover number would stay below $k$. However, in sharp contrast to the $k$-matching-free process, this restricted covering process exhibits quite trivial behaviour. One can easily check that, up to isomorphism, $\EGs$ is the only graph which is saturated with respect to the property of having vertex cover number less than $k$, so we will always have $G^\mathrm{vc}(N)\cong\EGs$.

\subsection{Notation}

For a probability distribution $\mathcal L$, we write $X\in \mathcal L$ to denote that a random element has distribution $L$. We write $\GG(n,m)$ for the distribution of a uniformly random $m$-edge subset of $K_n$ (this is known as the Erd\H{o}s-R\'enyi random graph), and we use the same notation $\GG\left(n,p\right)$ for the \emph{binomial} random graph where each edge of $K_n$ is present independently with probability $p$. Also, for $0\le t\le N=\binom n2$, let $G^{\all}\left(t\right)$ be the graph with all the edges $e\left(1\right),\dots,e\left(t\right)$. This graph has precisely the Erd\H{o}s-R\'enyi distribution $\GG(n,t)$.

For a real number $x$, the floor and ceiling functions are denoted $\floor{x}=\max\{i\in\mathbb{Z}:i\le x\}$ and $\ceil{x}=\min\{i\in\mathbb{Z}:i\ge x\}$. For a positive integer $i$, we write $[i]$ for the set $\{1,2,\dots,i\}$. For real numbers $x,y$, we write $x\lor y$ to denote $\max\{x,y\}$ and we write $x\land y$ to denote $\min\{x,y\}$. All logs are base $e$.

Finally, we use standard asymptotic notation throughout, as follows. For functions $f=f(n)$ and $g=g(n)$ we write
$f=O(g)$ to mean there is a constant $C$ such that $|f|\le C|g|$, 
we write $f=\Omega(g)$ to mean there is a constant $c>0$ such that $f\ge c|g|$, 
we write $f=\Theta(g)$ to mean that $f=O(g)$ and $f=\Omega(g)$, 
and we write $f=o(g)$ or $g=\omega(f)$ to mean that $f/g\to 0$. 
All asymptotics are taken as $n\to \infty$.

\section{The positive part of \texorpdfstring{\cref{thm:o(n)}}{Theorem~\ref{thm:o(n)}}\label{sec:o(n)}}

In this section we prove that if $k=o\left(n\right)$ then a.a.s.\ $G\left(N\right)\cong\EGs$. The proof consists of two phases. First,
we track the unconstrained evolution of the process until we first
see a matching of size $k-1$. During this time, the $k$-matching-free
process is identical to the basic Erd\H os-R\'enyi random graph
process, and is thus quite easy to analyse. In the second phase, we
begin to track the formation of ``augmenting paths'' that would
allow us to extend a $\left(k-1\right)$-matching into a $k$-matching, and are thus forbidden.
To this end, we will define an evolving partition of the vertex set
into ``components'' of vertices connected by certain special kinds
of paths. We will then couple the $k$-matching-free process with
a much simpler random graph process that captures this component structure,
and study this simpler process via comparison with a certain binomial random
graph.

\subsection{The initial unconstrained evolution\label{subsec:o(n)-initial}}

Let $\nu\left(G\right)$ be the matching number of a graph $G$, and
note that deterministically we have $\nu\left(G\left(t\right)\right)-\nu\left(G\left(t-1\right)\right)\in\left\{ 0,1\right\} $.
So, before the matching number reaches $k-1$, we accept every edge.
Let $\tau=\min\left\{ t:\nu\left(G\left(t\right)\right)=k-1\right\}\ge k-1$
be the time that the matching number reaches $k-1$. In this subsection
we collect some simple a.a.s.\ properties of $\tau$ and $G\left(\tau\right)$.
\begin{lem}
\label{lem:o(n)-tau-bound}A.a.s.\ $\tau\le2k$.
\end{lem}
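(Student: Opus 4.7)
My plan is to exploit the fact that the $k$-matching-free process coincides with the unconstrained Erd\H os--R\'enyi process up to time $\tau$. Indeed, since the matching number can grow by at most one per added edge, any graph with $\nu\le k-2$ remains $k$-matching-free after adding any single edge, so no edge is rejected before time $\tau$. Consequently, on the event $\{\tau>2k\}$ we have $G(2k)=G^{\all}(2k)\in\GG(n,2k)$, and it suffices to show that $\nu(\GG(n,2k))\ge k-1$ a.a.s.

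For this, I would argue via a first-moment calculation that the very sparse random graph $\GG(n,2k)$ has only $o(k)$ edges sharing an endpoint with another edge. Let $Y$ be the number of cherries (paths of length two) in $\GG(n,2k)$. Each cherry is determined by a centre vertex and an unordered pair of its potential neighbours, and the probability that the two associated edges both appear in $\GG(n,2k)$ is $\binom{N-2}{2k-2}\big/\binom{N}{2k}=O(k^2/n^4)$. Therefore
\[
\E[Y]\;\le\; n\binom{n-1}{2}\cdot O(k^2/n^4)\;=\;O(k^2/n)\;=\;o(k),
\]
using $k=o(n)$. Markov's inequality then gives $Y\le k/3$ a.a.s., and since every edge that shares an endpoint with some other edge participates in at least one cherry, at most $2Y\le 2k/3$ edges of $\GG(n,2k)$ are ``bad''. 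The remaining at least $2k-2k/3\ge k$ edges are pairwise vertex-disjoint, so $\nu(\GG(n,2k))\ge k\ge k-1$ a.a.s., as required.

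I do not foresee any substantive obstacle here: the argument is a routine first-moment computation. The key conceptual point is the reduction to the uniform random graph model on the event $\{\tau>2k\}$, after which the lemma is an immediate consequence of the sparsity of $\GG(n,2k)$.
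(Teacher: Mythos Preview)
Your proof is correct and essentially the same as the paper's: both are first-moment/Markov arguments showing that among the first $2k$ edges of $G^{\all}$, only $O(k^2/n)=o(k)$ fail to contribute a new matched edge. The paper frames this step-by-step (bounding, for each $t\le 2k$, the probability that $e(t)$ hits a vertex already covered) whereas you take a snapshot of $G^{\all}(2k)$ and count cherries, but the underlying computation and the reduction to the unrestricted process are identical.
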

\begin{proof}
For $t\le 2k$, $G^\all(t-1)$ has at most $2k$ edges (comprising at most $4k$ vertices), so the probability $e(t)$ intersects these edges is at
most $4kn/(\binom{n}2-2k)\le 9k/n$. Therefore, the expected number of steps $t\le 2k$
which do not increase the matching number is at most $2k\left(9k/n\right)=18k^{2}/n=o(k)$.
By Markov's inequality, this number of steps is a.a.s.\ at most $k$,
which proves that a.a.s.\ $\tau\le2k$.
\end{proof}
It follows from \cref{lem:o(n)-tau-bound} that if we can prove that
a decreasing property holds a.a.s.\ for the Erd\H os-R\'enyi random graph $G^{\all}(2k)\in \GG\left(n,2k\right)$, then it holds a.a.s.\ for $G\left(\tau\right)$.
In fact, using say \cite[Proposition~1.15]{JLR}, it suffices to show
that such a property holds a.a.s.\ for the binomial random graph $\GG\left(n,p\right)$,
where $p=2k/N$.
\begin{lem}
\label{lem:o(n)-tau-acylic}A.a.s.\ $G\left(\tau\right)$ is acyclic.
\end{lem}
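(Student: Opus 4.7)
The plan is to reduce this to a standard first-moment calculation on the binomial random graph. Acyclicity is a decreasing property (removing an edge can never create a cycle), so by \cref{lem:o(n)-tau-bound} and the reduction spelled out in the paragraph just before the statement, it suffices to show that $\GG(n,p)$ is a.a.s.\ acyclic, where $p=2k/N = 4k/(n(n-1))$. Note that since $k=o(n)$, we have $np = 4k/(n-1) = o(1)$, which is well below the threshold $p=\Theta(1/n)$ at which cycles first appear in $\GG(n,p)$.

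First I would bound the expected number of cycles of a given length. The number of potential $\ell$-cycles on the vertex set $[n]$ is $\binom{n}{\ell}(\ell-1)!/2$, and each is present in $\GG(n,p)$ with probability $p^\ell$, so the expected number of $\ell$-cycles is at most
\[
\binom{n}{\ell}\frac{(\ell-1)!}{2}p^\ell \le \frac{(np)^\ell}{2\ell}.
\]
Next I would sum over all $\ell\ge 3$: writing $x=np=o(1)$, the total expected number of cycles is at most $\sum_{\ell\ge 3} x^\ell/(2\ell) \le x^3/(1-x) = O\!\left((np)^3\right)=o(1)$. A final application of Markov's inequality gives that a.a.s.\ $\GG(n,p)$ contains no cycle at all.

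Combined with \cref{lem:o(n)-tau-bound} (which ensures $\tau\le 2k$ a.a.s., so that $G(\tau)$ is a subgraph of $G^{\all}(2k)\in\GG(n,2k)$) and the standard transfer from $\GG(n,m)$ to $\GG(n,p)$ via \cite[Proposition~1.15]{JLR}, this yields the claim. There is no real obstacle here; the only thing to be careful about is the reduction itself, namely that acyclicity is genuinely a decreasing property and therefore the $\GG(n,p)$-to-process transfer goes through in the stated direction.
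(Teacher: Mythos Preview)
Your proof is correct and follows essentially the same approach as the paper: reduce via \cref{lem:o(n)-tau-bound} and the $\GG(n,m)$-to-$\GG(n,p)$ transfer to showing $\GG(n,p)$ with $p=2k/N$ is a.a.s.\ acyclic, then apply a first-moment bound on the total number of cycles using $np=o(1)$ and Markov's inequality. The only cosmetic difference is that the paper bounds the cycle sum by $-\log(1-np)$ rather than by a geometric series, but this is immaterial.
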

\begin{proof}
We show that a.a.s.\ $G\in\GG\left(n,p\right)$ is acyclic. Noting
that $np=\Theta\left(k/n\right)=o\left(1\right)$, the expected number
of cycles in $G$ is
\[
\sum_{i=3}^{n}\binom{n}{i}\left(i-1\right)!p^{i}\le\sum_{i=1}^{\infty}\frac{\left(np\right)^{i}}{i}=-\log\left(1-np\right)=o\left(1\right),
\]
and the desired result follows from Markov's inequality.
\end{proof}
Next we show that most components of $G\left(\tau\right)$ are small. Define the \emph{susceptibility} $S(G)$ of a graph $G$ to be the sum of squares of sizes of its components. See for
example \cite{JL} for background on this notion. Let $\tilde{S}\left(G\right)$ be $S(G)$ minus the number of isolated vertices of $G$ (equivalently, $\tilde{S}\left(G\right)$ is the sum of squares
of sizes of nontrivial components of $G$).
\begin{lem}
\label{lem:o(n)-tau-S}A.a.s.\ $\tilde{S}\left(G\left(\tau\right)\right)=o\left(n\right)$
\end{lem}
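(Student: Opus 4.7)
The plan is to follow the reduction scheme developed for the previous two lemmas. One first checks that ``$\tilde{S}(H)\le \varepsilon n$'' is a decreasing graph property: $\tilde{S}$ is monotone nondecreasing under edge addition, since adding an edge $uv$ either leaves $\tilde{S}$ unchanged (if $u$ and $v$ already lie in a common component) or merges two components of sizes $a,b\ge 1$ into one, in which case a short case analysis (according to whether $a$ and $b$ equal $1$ or not) shows that $\tilde{S}$ strictly increases. Therefore, by \cref{lem:o(n)-tau-bound} together with the standard comparison between $\GG(n,m)$ and $\GG(n,p)$ noted just before \cref{lem:o(n)-tau-acylic}, it suffices to prove that $\tilde{S}(G)=o(n)$ a.a.s.\ for $G\in\GG(n,p)$ with $p=2k/N$, in which case $np=\Theta(k/n)=o(1)$.

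For this, I would carry out a direct first moment bound. Writing $C(v)$ for the connected component of $v$ in $G$, and noting that $|C(v)|\ge 2$ iff $\deg(v)\ge 1$,
\[
\tilde{S}(G)=\sum_{v}|C(v)|\cdot\one[|C(v)|\ge 2]=\sum_{v}\bigl(|C(v)|-\one[\deg(v)=0]\bigr),
\]
so by linearity and symmetry $\E[\tilde{S}(G)]=n\,\E[|C(1)|]-n(1-p)^{n-1}$. In the subcritical regime $np<1$, the vertex-exposure exploration of $C(1)$ is stochastically dominated by a Galton--Watson branching process with $\Bin(n-1,p)$ offspring, which has expected total progeny at most $1/(1-(n-1)p)\le 1/(1-np)=1+np+O((np)^2)$. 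Combining with $(1-p)^{n-1}\ge 1-(n-1)p$ gives
\[
\E[\tilde{S}(G)] \le n\bigl(1+np+O((np)^2)\bigr)-n\bigl(1-(n-1)p\bigr)=O(n^2p)=O(k),
\]
which is $o(n)$ since $k=o(n)$. Markov's inequality then yields $\tilde{S}(G)=o(n)$ a.a.s.

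I do not anticipate any real obstacle here: the only substantive ingredient is the standard subcritical branching-process bound on the expected component size, and everything else is routine bookkeeping. The monotonicity check underlying the reduction does need to be verified carefully, but it follows immediately once one tracks what happens to trivial versus nontrivial components when two components merge via a new edge. An alternative approach, exploiting \cref{lem:o(n)-tau-acylic} to count pairs of vertices belonging to the same tree component via Cayley-style enumeration, would also work, but the branching-process route feels cleaner.
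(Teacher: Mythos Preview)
Your proposal is correct and follows essentially the same approach as the paper. Both reduce to $\GG(n,p)$ with $p=2k/N$ via monotonicity of $\tilde S$ (which you verify explicitly, while the paper leaves it implicit), bound $\E|C(v)|\le 1/(1-(n-1)p)=1+o(1)$, subtract the expected count of isolated vertices to get $\E\tilde S(G)=o(n)$, and finish with Markov. The only cosmetic difference is that you phrase the component-size bound as domination by a subcritical Galton--Watson process, whereas the paper writes out the equivalent recursive inequality $\E X_v\le 1+(n-1)p\,\E X_v$ directly.
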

\begin{proof}
Let $G\in\GG\left(n,p\right)$; we will show that a.a.s.\ $\tilde{S}\left(G\right)=o\left(n\right)$.
Let $X_{v}$ be the size of the component of $v$ in $G$. Conditioning
on the neighbourhood $N_{G}\left(v\right)$ of $v$ in $G$, we have
\[
X_{v}\le 1+\!\!\sum_{w\in N_{G}\left(v\right)}\!\!X_{w}^{v},
\]
where $X_{w}^{v}\le X_{w}$ is the size of the component of $w$ in
$G-v$. Note that $X_{w}^{v}$ does not actually depend on $N_{G}\left(v\right)$,
so $\E\left[X_{w}^{v}\cond N_{G}\left(v\right)\right]=\E X_{w}^{v}\le\E X_{w}$
for all $w\ne v$. Then
\begin{align*}
\E\left[X_{v}\cond N_{G}\left(v\right)\right] &\le 1+\!\!\sum_{w\in N_{G}\left(v\right)}\!\!\E X_{w},\\
\E X_{v} & \le 1+\left(n-1\right)p\,\E X_{v},\\
\left(1-np\right)\E X_{v} & \le 1,\\
\E X_{v} & =1+o\left(1\right).
\end{align*}
Let $Q$ be the number of isolated vertices in $G$, so $\E Q=n(1-p)^{n-1}=ne^{O(np)}=n-o(n)$ and $\E\tilde{S}\left(G\right)=\E\left[\sum_{v}X_{v}\right]-\E Q=o\left(n\right)$. The desired result follows from Markov's inequality.
\end{proof}
\global\long\def\C{\mathcal{C}}
\global\long\def\D{\mathcal{D}}
\global\long\def\F{\mathcal{F}}
\global\long\def\G{\mathcal{G}}
\global\long\def\Q{\mathcal{Q}}
\global\long\def\A{\mathcal{A}}

In view of the above lemmas, for the rest of the proof condition on an outcome of $\tau,e\left(1\right),\dots,e\left(\tau\right)$
such that $\tau\le2k$, and such that $G\left(\tau\right)$ is acyclic
and satisfies $\tilde{S}\left(G\left(\tau\right)\right)=o\left(n\right)$.
Fix a $\left(k-1\right)$-edge matching $M$ in $G\left(\tau\right)$,
let $A$ be its vertex set, and let $B=\range n\setminus A$ contain
the other vertices. For any vertex $a\in A$, let $m_{a}$ be the
unique neighbour of $a$ in $M$. Note that $M$ will be a maximum
matching in $G\left(t\right)$ for each $t\ge\tau$, by the definition of the process. Given our conditioning, note that $e(\tau+1),\dots ,e(N)$ is a uniformly random ordering of the pairs of vertices other than $e(1),\dots,e(\tau)$.

Now, Berge's Lemma \cite[Theorem~1]{Berge} says that a matching is
maximum if and only if there is no \emph{augmenting path}: that is,
a path that starts and ends on unmatched vertices, and alternates
between edges in and not in the matching. This means that each incoming
edge $e\left(t\right)$ will be accepted if and only if its addition
to $G\left(t-1\right)$ does not create an augmenting path with respect
to $M$. For the rest of the paper, ``augmenting path'' will refer
to a path that starts and ends in $B$, and alternates between edges
in $M$ and not in $M$. In order to keep track of the formation of
such alternating paths, we introduce some auxiliary data (``charges''
and ``roots''), which evolve with $G\left(t\right)$, as follows.

\subsection{\label{subsec:o(n)-charges}Charges and roots}

We will define charges $c_{v}\left(t\right)\in \{-1,0,1\}$ and roots $r_{v}\left(t\right)\in \{0\}\cup B$
for each $t\ge\tau$ and each vertex $v$. If the root of a vertex is zero we say it has no root, and if the charge of a vertex is zero we say it is uncharged. To begin with, only the vertices in $B$ will be charged, and as the process $\left(G(t)\right)_t$ evolves, the vertices in $A$ will gradually become charged, gaining root data as this happens (charged vertices will never change their charge or root). The idea is that if a vertex is charged, that means there is an alternating path from that vertex to its root, and the sign of the charge corresponds to the parity of the length of this path. This information will allow us to deduce that certain edges are forbidden by the process.

First, we define ``initial
conditions'', which do not actually correspond to charge and root data at any point of the process, but which will be used as a starting point to define the evolution of the charge and root data. For each $b\in B$, let $c_{b}\left(*\right)=-1$ and
$r_{b}\left(*\right)=b$, meaning that each vertex in $B$ has
negative charge and has itself as a root. For each $a\in A$, let
$c_{a}\left(*\right)=0$ and $r_{a}\left(*\right)=0$, meaning that
each vertex in $A$ has no charge and no root.

Next we describe how the data update at each step. For a graph $G$, and for charge and
root data $\left(c,r\right)$, define $c'\left(G,c,r\right)$ and
$r'\left(G,c,r\right)$ via the following procedure. Start with the
charges and roots given by $c$ and $r$, and repeatedly do the following.
As long as there is an edge in $G$ between a negatively charged vertex
$v$ and an uncharged vertex $a\in A$, give a positive charge to
$a$, give a negative charge to $m_{a}$, and give both of these newly
charged vertices the same root as $v$. (If there are multiple edges
between negatively charged and uncharged vertices, choose the one that was offered first).

Finally, we can define the charge and root data associated with each $G\left(t\right)$,
$t\ge\tau$. Let $c\left(\tau\right)=c'\left(G\left(\tau\right),c\left(*\right),r\left(*\right)\right)$
and $r\left(\tau\right)=r'\left(G\left(\tau\right),c\left(*\right),r\left(*\right)\right)$,
and for $t>\tau$ let $c\left(t\right)=c'\left(G\left(t\right),c\left(t-1\right),r\left(t-1\right)\right)=c'\left(G\left(t\right),c\left(*\right),r\left(*\right)\right)$
and $r\left(t\right)=r'\left(G\left(t\right),c\left(t-1\right),r\left(t-1\right)\right)=r'\left(G\left(t\right),c\left(*\right),r\left(*\right)\right)$. For $t\ge\tau$ and $b\ne 0$ let $C^b\left(t\right)=\left\{ a\in A:r_{a}\left(t\right)=b\right\} $
be the ``charge component'' of vertices in $A$ which have root
$b$, and let $\C\left(t\right)$ be the collection of all such components
which are nonempty. Note that the edges
that were used to charge the vertices of $C^b\left(t\right)$ form
a tree $T^{b}\left(t\right)$ on the vertex set $C^b(t)\cup\{b\}$, rooted at $b$. Also, let $\D\left(t\right)$ be the set of connected
components in the subgraph of $G\left(t\right)$ induced by the $c\left(t\right)$-uncharged
vertices, and define $\F\left(t\right)=\C\left(t\right)\cup\D\left(t\right)$
as the set of ``generalised components'', which partition $A$.
See \cref{fig:charges} for an illustration. 
\begin{figure}[h]
\begin{center}
\begin{tikzpicture}[scale=1.4]
\def\minus#1{
 \begin{scope}[shift={#1},scale=0.15]
  \draw [fill=white](0,0) circle (1);
  \draw [ultra thick](-0.6,0) -- (0.6,0);
 \end{scope}
}
\def\plus#1{
 \begin{scope}[shift={#1},scale=0.15]
  \draw [fill=white](0,0) circle (1);
  \draw [ultra thick](-0.6,0) -- (0.6,0);
  \draw [ultra thick](0,-0.6) -- (0,0.6);
 \end{scope}
}
\def\empty#1{
 \begin{scope}[shift={#1},scale=0.15]
  \draw [fill=white](0,0) circle (1);
 \end{scope}
}
\draw(-1,0)--(10.75,0);
\node at (-0.8,0.35){$A$};
\node at (-0.8,-0.35){$B$};

\draw[rounded corners=10] (-0.4,0.35) rectangle (3.15,2.65);
\draw[rounded corners=10] (3.6,0.35) rectangle (5.4,1.9);
\draw[rounded corners=10] (6.1,0.35) rectangle (7.9,1.9);
\draw[rounded corners=10] (8.35,0.35) rectangle (10.15,2.65);

\node (p11) at (0,0.75){};
\node (p12) at (0,1.5){};
\node (p13) at (1.75,2.25){};
\node (n11) at (1,0.75){};
\node (n12) at (1,1.5){};
\node (n13) at (2.75,2.25){};
\node (n10) at (1,-0.5){};
\node at (1.35,-0.5){$b_1$};
\node at (1.375,2.9){$C^{b_1}(t)$};

\node (p21) at (1.75,0.75){};
\node (p22) at (1.75,1.5){};
\node (n21) at (2.75,0.75){};
\node (n22) at (2.75,1.5){};

\node (p31) at (4,0.75){};
\node (p32) at (4,1.5){};
\node (n31) at (5,0.75){};
\node (n32) at (5,1.5){};
\node (n30) at (5,-0.5){};
\node at (5.35,-0.5){$b_2$};
\node at (4.5,2.15){$C^{b_2}(t)$};

\node (p41) at (6.5,0.75){};
\node (p42) at (6.5,1.5){};
\node (n41) at (7.5,0.75){};
\node (n42) at (7.5,1.5){};
\node at (7,2.15){$D_1$};

\node (p51) at (8.75,0.75){};
\node (p52) at (8.75,1.5){};
\node (n51) at (9.75,0.75){};
\node (n52) at (9.75,1.5){};
\node (p53) at (8.75,2.25){};
\node (n53) at (9.75,2.25){};
\node at (9.25,2.9){$D_2$};

\draw[ultra thick] (p11)--(n11);
\draw[ultra thick] (p12)--(n12);
\draw[ultra thick] (p13)--(n13);
\draw[ultra thick] (p21)--(n21);
\draw[ultra thick] (p22)--(n22);
\draw[ultra thick] (p31)--(n31);
\draw[ultra thick] (p32)--(n32);
\draw[ultra thick] (p41)--(n41);
\draw[ultra thick] (p42)--(n42);
\draw[ultra thick] (p42)--(n42);
\draw[ultra thick] (p51)--(n51);
\draw[ultra thick] (p52)--(n52);
\draw[ultra thick] (p53)--(n53);

\draw[dashed] (n11) -- (p12);
\draw[dashed] (n21) -- (p13);
\draw[dashed] (n11) -- (p21);
\draw[dashed] (n21) -- (p22);
\draw[dashed] (n41) -- (n42);
\draw[dashed] (n10) -- (p11);
\draw[dashed] (n30) -- (p31);
\draw[dashed] (n30) -- (p32);
\draw[dashed] (n51) -- (p52);
\draw[dashed] (n53) -- (p52);

\plus{(p11)}\plus{(p12)}\plus{(p13)}
\plus{(p21)}\plus{(p22)}
\plus{(p31)}\plus{(p32)}
\minus{(n11)}\minus{(n12)}\minus{(n13)}\minus{(n10)}
\minus{(n21)}\minus{(n22)}
\minus{(n31)}\minus{(n32)}\minus{(n30)}
\empty{(p41)}\empty{(p42)}\empty{(n41)}\empty{(n42)}
\empty{(p51)}\empty{(p52)}\empty{(p53)}\empty{(n51)}\empty{(n52)}\empty{(n53)}

\end{tikzpicture}
\end{center}

\caption{\label{fig:charges}An example of the state of the charge and root
data at some time $t\ge\tau$. The solid edges are edges of $M$,
and $\protect\D\left(t\right)=\left\{ D_{1},D_{2}\right\} $. Only the edges in the trees $T^b(t)$, and the edges in the uncharged components, are depicted.}
\end{figure}

We will next show that to prove \cref{thm:o(n)} it suffices, roughly
speaking, to prove that edges within generalised components are much
rarer than edges between $A$ and $B$. To state this as a lemma,
we define some hitting times, for each $a\in A$. (Formally, we allow these
hitting times to take the value $\infty$ if their corresponding events
never occur).

\begin{itemize}
\item Let $\tau_{a}^{F}$ be the first time $t>\tau$ that we are offered
an edge $e\left(t\right)$ between $a$ and the rest of its generalised
component in $\F\left(t-1\right)$, or between $a$ and $r_a(t-1)$.
\item Let $\tau_{a}^{B}$ be the first time $t>\tau$ that we are offered
an edge $e\left(t\right)$ between $a$ and $B\setminus\left\{ r_{a}\left(t-1\right)\right\} $.
Note that $B\setminus\left\{ r_{a}\left(t-1\right)\right\} =B$ if
$a$ is uncharged at time $t-1$. Note also that $\tau_{a}^{B}<\infty$
because we are assuming that $\tau<2k<\left|B\right|$.
\item Let $\tau_{a}^{C}\le \tau_{a}^{B}$ be the time $t\ge\tau$ at which
$a$ becomes charged.
\end{itemize}
\begin{lem}
\label{lem:o(n)-main-reduction}If $\tau_{v}^{F}>\tau_{m_{v}}^{B}$
for all $v\in A$, then $G\left(N\right)\cong\EGs$.
\end{lem}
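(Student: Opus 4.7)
The plan is to identify, under the hypothesis, a partition $A = A_+ \cup A_-$ into the sets of positively- and negatively-charged vertices in $G(N)$, such that $A_+$ will play the role of the $(k-1)$-clique of $\EGs$ while $A_-\cup B$ forms the independent set of size $n-k+1$. Since $\tau_v^B<\infty$ for every $v\in A$, every vertex of $A$ is charged by time $N$, and the update rule ensures that each matching pair $\{u,m_u\}$ contains exactly one positive and one negative vertex, giving $|A_+|=|A_-|=k-1$. What remains is to show that, under this identification, the edges accepted by the process are exactly those of $\EGs$.

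First I would argue that the charging of each matching pair is driven by an external $A$-$B$ edge, not by internal propagation between pairs. Using the conditioning $\tilde S(G(\tau))=o(n)$ (so that a typical matching pair sits in its own tiny $G(\tau)$-component, essentially just the edge $\{u,m_u\}$), together with the hypothesis (which prevents any internal-component edge from touching a pair before the external edges arrive), I would show that, for each pair with WLOG $\tau_u^B\le\tau_{m_u}^B$, the offered edge $\{u,b\}=e(\tau_u^B)$ is accepted and is what creates the charges: $u$ becomes positive and $m_u$ becomes negative, both with root $b$. Thus $A_+$ is exactly the set of ``early'' vertices in each pair and $A_-$ the ``late'' ones.

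Second, I would perform a case analysis on $M$-augmenting paths to determine which subsequent offered edges are accepted. The rejection side is the easy direction: an offered $\{v,b'\}$ with $v\in A_-$ extends the alternating path in the charge-tree $T^{r_v}$ (which ends at $v$ with a matching edge) into an $M$-augmenting path to $b'$ or another $B$-vertex via $v$'s partner, and edges within $A_-$ similarly concatenate two such alternating paths to produce an augmenting path. The delicate direction is showing acceptance of every edge within $A_+$ and between $A_+$ and $A_-\cup B$: any would-be augmenting path through such a new edge $e$ must continue from the negative partner of a positive endpoint of $e$ along a non-matching edge and traverse the charge-trees to reach a different $B$-vertex. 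The hypothesis $\tau_{m_v}^F>\tau_v^B$, propagated inductively along the pairs encountered on the putative alternating chain, forbids the existence of the internal edges required for such a continuation at the moment $e$ is offered. This inductive argument along alternating chains through multiple charge-trees is the main obstacle; once it is carried out, $G(N)$ has exactly the edge set of $\EGs$, yielding $G(N)\cong\EGs$.
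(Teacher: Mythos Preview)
There is a genuine gap. Your plan commits to proving two directions: (i) every edge within $A_-\cup B$ is rejected, and (ii) every edge incident to $A_+$ is accepted. Direction (ii) is the ``main obstacle'' you yourself flag, and your sketch for it is vague; but it is also unnecessary. The paper's key observation is that (i) alone already gives an independent set of size $n-k+1$, so $G(N)$ is isomorphic to a subgraph of $\EGs$. Since $G(N)$ is $k$-matching-\emph{saturated} by construction and $\EGs$ is $k$-matching-free, $G(N)$ cannot be a proper subgraph, and hence $G(N)\cong\EGs$. This saturation trick entirely bypasses the acceptance direction you are struggling with.

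Your argument for (i) also has holes. Concatenating the alternating paths from two negatively charged vertices only yields an augmenting path when their roots are distinct; when they share a root $b$, the concatenation closes up into a walk returning to $b$, and a separate argument is required (the paper handles this via auxiliary sets $S_v$ and an induction ordered by $\tau_{m_v}^B$, together with showing that no cycles are ever created among uncharged vertices). Finally, your preliminary step---that the charging of each pair is driven by a direct $A$--$B$ edge, so that $A_+$ is exactly the ``early'' vertex of each pair---is not correct. The hypothesis $\tau_v^F>\tau_{m_v}^B$ only concerns edges offered \emph{after} time $\tau$; charging at time $\tau$ is determined by the structure already present in $G(\tau)$, which may contain nontrivial components through which charge propagates between matching pairs. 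The condition $\tilde S(G(\tau))=o(n)$ is an aggregate bound, not a statement about ``typical'' pairs, and cannot be invoked this way inside a deterministic lemma.
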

\begin{proof}
We will show that if $\tau_{v}^{F}>\tau_{m_{v}}^{B}$ for all $v$,
then $G\left(N\right)$ has no edges between negatively charged vertices,
which implies that $G\left(N\right)\cong\EGs$. Indeed, at the end of the process each edge of $M$ will have one positively charged and one negatively charged vertex, so there will be
$k-1$ negatively charged vertices in $A$. Combined with the $n-2(k-1)$ negatively charged vertices in $B$, we will have proved that $G\left(N\right)$ has an independent set
of size $n-k+1$, which means it is isomorphic to a subgraph of $\EGs$.
But $G\left(N\right)$ is $k$-matching-saturated, so it cannot be a proper subgraph of the $k$-matching-free graph $\EGs$.

Note first that there can never be any edge between negatively charged
vertices with different roots $b,b'\in B$, because this would give
an augmenting path between $b$ and $b'$. Now, we consider the possible ways that an edge between negatively charged vertices with the same root could arise. The simplest possibility is that we could accept an edge $e(t)$ between two such vertices that are already negatively charged. The second possibility is that the process of charging
vertices (via the introduction of an edge $e(t)$ between a negatively charged vertex $v$ with root $b$ and an uncharged vertex $a$) can somehow result in the previously uncharged endpoints of an existing edge $e$ becoming negatively charged. Observe that this second possibility can only occur if $e$ was previously in a cycle in its uncharged component. Indeed, the entire subtree $T$ of $T^{b}\left(t\right)$ rooted at $a$ would have been newly charged at step $t$, and since the charges $c\left(t\right)$
give a proper 2-colouring of $T$, $T\cup \{e\}$ must have had a cycle. Since we are assuming that $G\left(\tau\right)$
is acyclic, it suffices to prove:
\begin{itemize}
\item [(1)]we never accept an edge that creates a cycle among the uncharged
vertices, and;
\item [(2)]we never accept an edge between two negatively charged vertices with the same root.
\end{itemize}
First, since $\tau_{v}^{F}>\tau_{m_{v}}^{B}\ge\tau_{v}^{C}$ for each
$v$, we are never even offered an edge $e\left(t\right)$ between an uncharged
vertex and the rest of its component $D\in\D\left(t-1\right)$. This
immediately proves (1).

Next, consider a vertex $v$ which becomes negatively charged (with
root $b$) at time $\tau_{v}^{C}$. Let $S_{v}$ be the set of negatively
charged vertices $w\in C^b\left(\tau_{v}^{C}\right)\cup \{b\}$ such that
the unique path between $b$ and $w$ in the tree $T^{b}\left(\tau_{v}^{C}\right)$
does not pass through $v$. (This set $S_{v}$ does not evolve with
$t$). Note that at any time $t$, for any negatively charged distinct $v,w\in C^b\left(t\right)\cup \{b\}$,
we always have $w\in S_{v}$ or $v\in S_{w}$ (in particular, we will
have $w\in S_{v}$ if $v$ was charged later than $w$). The relevance
of these sets is that if there is already an edge from $m_{v}$ to $B\setminus\left\{ b\right\} $,
then an edge from $v$ to $S_{v}$ would create an augmenting path,
so is forbidden.

\mk{would a picture be helpful here?}

For any $v\in A$, note that if conditions (1) and (2) have not been
violated yet at time $\tau_{m_{v}}^{B}-1$, then $G\left(\tau_{m_{v}}^{B}-1\right)$
has an independent set of size $n-k+1$, consisting of the negatively
charged vertices and one colour class (not containing $m_v$) of a 2-colouring of the uncharged
vertices. Unless $m_{v}$ is already negatively charged, meaning that $v$ is positively charged, this independent set would not be affected by the addition
of $e\left(\tau_{m_{v}}^{B}\right)$, so $G(t-1)+e(t)$ has no $k$-matching and $e\left(\tau_{m_{v}}^{B}\right)$ is accepted. Since $\tau_{m_{v}}^{B}<\tau_{v}^{F}$, this means that if $v$ is ever negatively charged then no edge between $v$ and $S_{v}$ can ever
be accepted. Applying this argument iteratively to all $v$, in order
of $\tau_{m_{v}}^{B}$, proves (2).
\end{proof}

Now, to prove that edges within generalised components are much rarer than edges between $A$ and $B$, it will suffice to show that most generalised components are likely to remain ``small'' throughout the process. For a partition $\G$ of $A$, let $S(\G)$ be the sum of squares of sizes of its parts. It would be most natural to try to show that $S(\F(t))$ is small for each $t$, but for technical reasons it is more convenient to individually deal with the $\C(t)$ and $\D(t)$. We can view each $\C(t)$ (respectively, each $\D(t)$) as a partition of $A$ by putting each uncharged (respectively, charged) vertex in its own singleton part. Note that the sequence of partitions $\C(t)$ is ``monotone'' in the sense that for each $t>\tau$, $\C(t-1)$ is a refinement of $\C(t)$. This is not true for the $\D(t)$, because when part of an uncharged component gains charge, it splits into singleton components. So, let $\overline\D(t)$ be the finest common coarsening of the partitions $\D(\tau),\dots,\D(t)$. Equivalently, this means that $\overline\D(t)$ is the set of connected components of the union of all the uncharged subgraphs up to time $t$. The following lemma reduces \cref{thm:o(n)} to a.a.s.\ bounds on $S\left(\C(N)\right)$ and $S\left(\overline\D(N)\right)$.
\begin{lem}
\label{lem:o(n)-simplified-process-reduction}To prove that a.a.s.\ $\tau_{v}^{F}>\tau_{m_{v}}^{B}$ for all $v\in A$, it suffices to
prove that a.a.s.\ $$S\left(\C(N)\right),\;S(\overline\D(N))=o(n).$$
\end{lem}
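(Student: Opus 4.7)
My plan is a first-moment argument via the union bound $\Pr[\exists v\in A:\tau_v^F\le\tau_{m_v}^B]\le\sum_v\Pr[\tau_v^F\le\tau_{m_v}^B]$, controlling each summand by the current size of $v$'s generalised component. The two ingredients are a per-step comparison between ``bad for $v$'' and ``good for $m_v$'' edges, and a telescoping step that exploits the monotone refinement structure of $\C(\cdot)$ and $\overline\D(\cdot)$ to turn per-vertex component-size bounds into a single global bound.

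Using a standard diagonal argument, pick $f=f(n)=o(n)$ such that $\mathcal E=\{S(\C(N))+S(\overline\D(N))\le f\}$ holds a.a.s., and introduce the stopping time $T^*=\min\{t>\tau:S(\C(t))+S(\overline\D(t))>f\}$ (with the convention $T^*=N+1$ if no such $t$ exists). Since each step can only merge parts of $\C$ or of $\overline\D$, both $S(\C(t))$ and $S(\overline\D(t))$ are non-decreasing in $t$, so $\{T^*\le N\}=\bar{\mathcal E}$ and $\Pr[T^*\le N]=o(1)$. For each $v\in A$ let $T_v=\tau_v^F\land\tau_{m_v}^B$. Given $\mathcal F_{T_v-1}$, the edge $e(T_v)$ is uniform over the remaining pairs and must be either bad for $v$ or good for $m_v$; there are at most $|F_v^*(T_v-1)|$ pairs of the former type (where $F_v^*(t)$ denotes $v$'s generalised component at time $t$), while there are at least $|B|-2-\deg_{G(\tau)}(m_v)\ge n-4k=(1-o(1))n$ of the latter (using the deterministic bound $\deg_{G(\tau)}(m_v)\le\tau\le 2k$ and the fact that, by definition of $T_v$, no good-for-$m_v$ edge has been accepted between times $\tau$ and $T_v$). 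Hence
\[
\Pr[\tau_v^F\le\tau_{m_v}^B\cond\mathcal F_{T_v-1}]\le\frac{|F_v^*(T_v-1)|}{(1-o(1))n}.
\]

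The key step is to sum this over $v$ without losing another factor of $n$. Because the event $\{T_v\le T^*\}$ coincides with $\{S(\C(T_v-1))+S(\overline\D(T_v-1))\le f\}$, it is $\mathcal F_{T_v-1}$-measurable, so I would multiply by its indicator and apply the tower property. Monotonicity of $\C(\cdot)$ and $\overline\D(\cdot)$ now gives, on $\{T_v\le T^*\}$,
\[
|F_v^*(T_v-1)|\le|\text{$v$'s part in }\C(T^*-1)|+|\text{$v$'s part in }\overline\D(T^*-1)|,
\]
because $v$'s charge component only grows with time, and its uncharged component at any time is contained in its $\overline\D$-mega-component at a later time. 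Summing over $v\in A$ and using $\sum_v|\text{$v$'s part in }\mathcal G|=S(\mathcal G)$ together with the defining property of $T^*$ yields the deterministic bound
\[
\sum_v\one[T_v\le T^*]\,|F_v^*(T_v-1)|\le S(\C(T^*-1))+S(\overline\D(T^*-1))\le f.
\]
Combining,
\[
\Pr[\exists v:\tau_v^F\le\tau_{m_v}^B]\le\Pr[T^*\le N]+\frac{f}{(1-o(1))n}=o(1),
\]
as required.

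The main obstacle is precisely this monotonicity/summation step. A naive pointwise bound $|F_v^*|\le\sqrt f$ on $\mathcal E$ (from each part of $\C$ or $\overline\D$ having size at most $\sqrt S$) only produces a sum of order $n\sqrt f$ and gives the useless estimate $O(\sqrt f)=\omega(1)$. The gain of a factor $\sqrt{n/f}$ comes from recognising that $\sum_v|F_v^*(T_v-1)|$ telescopes to $S(\C(\cdot))+S(\overline\D(\cdot))$ through the monotone refinement of the partitions, which is exactly what makes the hypothesis $S=o(n)$ (rather than the much stronger $S=o(\sqrt n)$) sufficient.
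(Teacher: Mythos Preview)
Your proof is correct and follows essentially the same approach as the paper: both introduce a truncation time at which $S(\C)+S(\overline\D)$ first exceeds $f$, compare ``bad-for-$v$'' edges against ``good-for-$m_v$'' edges at the moment $T_v=\tau_v^F\land\tau_{m_v}^B$, and then sum over $v$ using $\sum_v X_v = S(\C)+S(\overline\D)$. The only cosmetic difference is that the paper conditions on fixed $t$ and averages over $\{T_v=t\}$ rather than invoking $\mathcal F_{T_v-1}$ directly (note $T_v$ is not predictable, so your displayed conditional-probability line is slightly informal as written, but unpacking it over fixed $t$ gives exactly the paper's computation).
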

\begin{proof}
For each $v\in A$, let $F_{v}\left(t\right)\in\F\left(t\right)$, $C_v(t)\in\C\left(t\right)$ and $D_{v}\left(t\right)\in\overline\D\left(t\right)$ be the parts containing $v$ in the partitions $\F\left(t\right)$, $\C\left(t\right)$ and $\overline\D\left(t\right)$ respectively. Let $X_v(t)=|C_v(t)|+|D_v(t)|$, and let $S^X(t)=\sum_{v\in A}X_v(t)$. So, $|F_v(t)|\le X_v(t)$, the sequence of $S^X(t)$ is monotone nondecreasing in $t$, and we are assuming that a.a.s.\ $S^X(N)=S(\C(N))+S(\overline\D(N))=o(n)$.

By considering the events $
\{t=\tau_{v}^{F}<\tau_{m_{v}}^{B}\}$ for
each $t\ge\tau$ and $v\in A$, it would be possible to show that
\[
\Pr\left(\bigcup_{v\in A}\left\{ \tau_{v}^{F}<\tau_{m_{v}}^{B}\right\} \right)\le\frac{2\sum_{v\in A}\E X_v(N)}{n}=\frac{2\,\E S^X\left(N\right)}{n}.
\]
However, an a.a.s.\ bound on $S^X(N)$ does
not (directly) imply that $\E S^X(N)=o\left(n\right)$.
\mk{This is very annoying, am I missing something obvious here?}
To overcome this difficulty, we essentially stop the process as soon
as $S^X(N)$ gets too large. To be precise,
choose $f=o\left(n\right)$ such that a.a.s.\ $S^X(N)\le f$,
and let 
\[
\tau_{f}^{S}=\min\left\{ t\le N:S^X(t)>f\right\} \land\left(N+1\right),
\]
so that a.a.s.\ $\tau_{f}^{S}=N+1$, and therefore a.a.s.\ $\tau_{m_{v}}^{B}\land\tau_{f}^{S}=\tau_{m_{v}}^{B}$. Then, $S^X(\tau_{f}^{S}-1)\le f=o\left(n\right)$,
so it suffices to show that
\begin{equation}
\Pr\left(\bigcup_{v\in A}\left\{ \tau_{v}^{F}\le\tau_{m_{v}}^{B}\land\tau_{f}^{S}\right\} \right)\le\frac{2\,\E S^X(\tau_{f}^{S}-1)}{n}.\label{eq:o(n)-desired-stopped-prob}
\end{equation}
Consider any $t>\tau$ and $v\in A$, and condition on $e\left(\tau+1\right),\dots,e\left(t-1\right)$.
If $t-1<\tau_{v}^{F}\land\tau_{m_{v}}^{B}\land\tau_{f}^{S}$ then
there are at most $\left|(F_{v}\left(t-1\right)\cup\{r_v(t-1)\})\setminus\{v\}\right|\le \left|F_{v}\left(t-1\right)\right|$
choices for $e\left(t\right)$ that would cause $t=\tau_{v}^{F}$. There are at least $\left|B\setminus\left\{ r_{v}\left(t-1\right)\right\} \right|-\tau\ge n/2$
choices for $e\left(t\right)$ that would cause $t=\tau_{v}^{F}\land\tau_{m_{v}}^{B}$. (Here we account for the fact that $\tau$ edges had already been offered at time $\tau$, and are therefore not viable candidates for $e(t)$).
So,
\begin{align*}
&\Pr\left(\tau_{v}^{F}=t\le\tau_{f}^{S}\cond\tau_{v}^{F}\land\tau_{m_{v}}^{B}=t,\,e\left(\tau+1\right),\dots,e\left(t-1\right)\right)\\
&\qquad=\frac{
\Pr\left(
\tau_{v}^{F}=t\le\tau_{f}^{S}\cond e\left(\tau+1\right),\dots,e\left(t-1\right)\right)
}
{
\Pr\left(\vphantom{\tau_{f}^{S}}\tau_{v}^{F}\land\tau_{m_{v}}=t\cond e\left(\tau+1\right),\dots,e\left(t-1\right)\right)
}
\\
&\qquad \le
\frac{\left|F_{v}\left(t-1\right)\right|}{n/2}.
\end{align*}
It follows that
\begin{align*}
\Pr\left(\tau_{v}^{F}\le\tau_{m_{v}}^{B}\land\tau_{f}^{S}\cond\tau_{v}^{F}\land\tau_{m_{v}}^{B}=t\right) & =\Pr\left(\tau_{v}^{F}=t\le\tau_{f}^{S}\cond\tau_{v}^{F}\land\tau_{m_{v}}^{B}=t\right)\\
 & \le\frac{2\,\E
\left|F_{v}\left(\tau_{f}^{S}-1\right)\right|
}{n}
  \le\frac{2\,\E X_v(\tau_{f}^{S}-1)}{n}.
\end{align*}
\mk{It seems like what I'm doing here resembles the optional stopping
theorem. Would it be better to write this in a way that actually uses
the optional stopping theorem?}
Since this holds for all $t$, we
in fact have
\[
\Pr\left(\tau_{v}^{F}\le\tau_{m_{v}}^{B}\land\tau_{f}^{S}\right)\le\frac{2\,\E X_v(\tau_{f}^{S}-1)}{n}.
\]
The desired inequality \cref{eq:o(n)-desired-stopped-prob} follows, by the union bound.
\end{proof}

\subsection{\label{subsec:o(n)-coupling}Coupling with a simpler process}

In this section we define an auxiliary random graph process $G'\left(t\right)$ based
on $G\left(t\right)$, which captures most of its generalised component structure but is much simpler to analyse. Each $G'\left(t\right)$
will be a graph on the vertex set $A$. To start with, define $G'\left(\tau\right)$ to contain a clique on the vertex set of each generalised component $F\in \F(\tau)$. Then, for every $t>\tau$ and every $v\in A$,
let $E_{v}\left(t\right)$ be the event that there was an edge $e\left(t'\right)$,
with $\tau\le t'\le t$, between $B$ and $v$. Let $e\left(t\right)=\left\{ v\left(t\right),w\left(t\right)\right\} $,
and for all $t>\tau$, let 
\[
G'\left(t\right)=\begin{cases}
G'\left(t-1\right) & \text{if }E_{v\left(t\right)}\left(t\right)\text{ and }E_{w\left(t\right)}\left(t\right)\text{ both hold, or if }e\left(t\right)\nsubseteq A;\\
G'\left(t-1\right)+e\left(t\right) & \text{otherwise.}
\end{cases}
\]
That is to say, we reject an edge within $A$ only if both of its endpoints have already
been offered an edge to $B$.

Now, let $\F'(t)$ be the set of connected components of $G'(t)$. We would like to be able to say that for each $t$, the partition $\F\left(t\right)$ is a refinement of the partition $\F'(t)$, so that we can control $S(\C(N))$ and $S(\overline \D(N))$ via $S(\F'(N))$. This turns out to be almost true, except for the fact that an uncharged component $D$ can merge with a charge component $C^b(t-1)$ via an edge $e(t)$ from $D$ to $b$, and such edges are ``invisible'' to the process $G'(t)$.

So, we define a slight refinement $\C'(t)$ of $\C(t)$ ignoring these ``invisible'' edges, such that $\C'\left(t\right)$ really is a refinement of $\F'(t)$; we will bound $S(\C'(N))$ via $S(\F'(N))$ and deal with the coarsening of $\C'(t)$ to $\C(t)$ separately. First we define the refinement $\C_\sub(t)$ of $\C(t)$ obtained by splitting each $C^b(t)$ into the connected components of the forest $T^b(t)-b$ (we call these ``sub-components''). Then let $\C'(t)$ be the finest common coarsening of $\C_\sub(t)$ and $\C(\tau)$. That is to say, we only group vertices which are in the same charge component because they were directly charged by each other, not by their external root, unless they had already been charged in this way by time $\tau$.

\begin{lem}
\label{lem:coarser}$\C'\left(N\right)$ and $\overline{\D}(N)$ are both refinements of $\F'(N)$, as partitions of $A$.
\end{lem}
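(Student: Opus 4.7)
The plan is to establish the refinements $\overline\D(N)\preceq\F'(N)$ and $\C'(N)\preceq\F'(N)$ separately (writing $P\preceq Q$ for ``$P$ refines $Q$''), both relying on a preliminary observation: if $v\in A$ is uncharged at some time $t\ge\tau$, then $E_v(t)$ is false. To prove this I would argue by contradiction. If some edge $\{b,v\}$ with $b\in B$ were offered at a time $t'\in[\tau,t]$, then by monotonicity of charges $v$ (and hence $m_v$) is uncharged at $t'-1$, and a short Berge's lemma analysis shows that adding $\{b,v\}$ to $G(t'-1)$ cannot create an augmenting path (any such path would force an even-length alternating path from a $B$-vertex to $v$, which would render $v$ negatively charged); hence $\{b,v\}$ is accepted, and the charging procedure at step $t'$ charges $v$ on the spot, contradicting $v$ uncharged at $t$.

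Given this observation, $\overline\D(N)\preceq\F'(N)$ follows quickly. Any edge $\{v,w\}$ in the uncharged subgraph at some time $t''\le N$ has both endpoints uncharged at $t''$, hence also at $t'-1$, where $t'\le t''$ is the time the edge was offered. The preliminary observation then gives that $E_v(t'-1)$ and $E_w(t'-1)$ are both false; since $e(t')\subseteq A$ touches no $B$-vertex, both events remain false at $t'$, so $G'$ accepts $\{v,w\}$, placing $v,w$ in a common $\F'(N)$-component.

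For $\C'(N)\preceq\F'(N)$, note that since $\C'(N)$ is the finest common coarsening of $\C(\tau)$ and $\C_\sub(N)$, and a common coarsening of two $\F'(N)$-refining partitions is still an $\F'(N)$-refinement, it suffices to verify both. $\C(\tau)\preceq\F'(N)$ is immediate from the clique structure of $G'(\tau)$ on $\F(\tau)$-parts. For $\C_\sub(N)\preceq\F'(N)$, each part is a component of some $T^b(N)-b$, so I would show that every edge of $T^b(N)-b$ has its endpoints in one $\F'(N)$-component. Matching edges $\{a,m_a\}\subseteq A$ are handled by $G'(\tau)$: since $a$ and $m_a$ always share charged/uncharged status, they always share an $\F(\tau)$-part (either both in $C^b(\tau)$, or both in the same $\D(\tau)$-part, joined by the edge $\{a,m_a\}\in M\subseteq G(\tau)$). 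For a non-matching charging edge $\{w,a\}$ (with $w$ negatively charged and $a$ positively charged via this edge), letting $t_a$ be the time $a$ first becomes charged, I would split into two cases: either $w$ was already negatively charged at $t_a-1$, in which case $\{w,a\}$ must equal $e(t_a)$ (otherwise the charging procedure at $t_a-1$ would have charged $a$ earlier); or $w$ became charged simultaneously with $a$ at $t_a$, in which case both $w,a$ were uncharged at $t_a-1$. In either case, applying the preliminary observation at the time $\{w,a\}$ was offered shows $G'$ accepts.

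The main obstacle I anticipate is the second sub-case for non-matching edges: the edge $\{w,a\}$ can have been offered well before $t_a$, with both endpoints remaining silently uncharged until the cascade at $t_a$, so one cannot simply appeal to the state at $t_a$ itself. The preliminary observation, which applies uniformly across all times, is precisely what is needed to settle this delicate timing issue.
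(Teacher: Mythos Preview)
Your overall strategy matches the paper's: both proofs rest on the preliminary claim that if $v\in A$ is uncharged at time $t\ge\tau$ then $E_v(t)$ fails (equivalently, the paper's assertion $\tau_a^C\le\tau_a^B$, stated without justification in the bullet-point definitions of the hitting times). You go further than the paper by attempting to justify this via Berge's lemma. Apart from organization---you treat $\overline\D(N)$ and $\C'(N)$ separately, whereas the paper runs a single induction on $\G(t)=\D(t)\cup\C'(t)$---the arguments are essentially equivalent.

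However, your Berge's-lemma justification has a genuine gap, and in fact the preliminary claim itself is false as stated. The problematic step is ``any such path would force an even-length alternating path from a $B$-vertex to $v$, which would render $v$ negatively charged'': the charging procedure is a one-pass alternating search that never revisits already-charged vertices, so in the presence of a blossom an even-length alternating path from $B$ to a vertex need not force that vertex to be charged at all. Concretely, with $M=\{\{a_i,m_i\}:i=1,2,3\}$, offer in order $\{b',a_1\}$, $\{m_1,a_2\}$, $\{m_1,m_2\}$, $\{a_2,a_3\}$ (all accepted; after charging, $a_1,a_2$ are positive and $m_1,m_2$ negative with root $b'$, while $a_3,m_3$ remain uncharged), then offer $\{b'',m_3\}$ for some $b''\ne b'$. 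This last edge creates the augmenting path $b'',m_3,a_3,a_2,m_2,m_1,a_1,b'$ and is \emph{rejected}, leaving $m_3$ uncharged while $E_{m_3}$ now holds. The same example gives $\tau_{m_3}^C>\tau_{m_3}^B$, so the paper's own proof shares this gap; the lemma may well be true, but neither argument establishes it as written.
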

\begin{proof}
Let $\G(t)=\D(t)\cup \C'(t)$. Since $\F'(t)$ is monotone in the sense that each $\F'(t-1)$ is a refinement of $\F(t)$, it suffices to prove that for each $\tau \le t\le N$, $\G(t)$ is a refinement of $\F'(t)$. First note that $\G(\tau)=\F'(\tau)$ by definition. Now, there are two ways $\G(t)$ can differ from $\G(t-1)$. The first possibility is that an edge $e(t)$ is accepted for $G(t)$ between two uncharged vertices in different components $D_1,D_2\in \D(t-1)$, in which case those components are merged to give $\D(t)$. The second possibility is that $e(t)$ is accepted for $G(t)$ between an uncharged vertex (in a component $D\in \D(t)$, say) and a negatively charged vertex $w$ (with root $b$, say), in which case some subset $U\subseteq D$ gains charge and is added to the relevant sub-component of $C^b\left(t-1\right)$. In this case, $w$ could be $b$ itself (in which case $\G(t)$ is a refinement of $\G(t-1)$), or $w$ could be in $C^b(t-1)\subseteq A$. Considering all possibilities, it suffices to show that every edge between an uncharged vertex and another vertex in $A$, which is accepted for $G(t)$, is also accepted for $G'(t)$. To see this, note that if $E_{v}\left(t\right)$ holds, then $t\ge\tau_{v}^{B}\ge\tau_{v}^{C}$, meaning that $v$ is charged at time $t$.
\end{proof}

Now we show how to control $S\left(\C(N)\right)$ and $S\left(\overline\D(N)\right)$ via $S(\F'(N))=S(G'(N))$.
\begin{lem}
\label{lem:o(n)-eliminate-fix}To prove that a.a.s.\ $S\left(\C(N)\right), S\left(\overline\D(N)\right)=o\left(n\right)$, it suffices to prove that a.a.s.\ $S\left(G'\left(N\right)\right)=o\left(n\right)$.
\end{lem}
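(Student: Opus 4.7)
The plan is to treat $S(\overline\D(N))$ and $S(\C(N))$ separately, in both cases via \cref{lem:coarser}, which tells us that $\overline\D(N)$ and $\C'(N)$ both refine $\F'(N)$ (the partition of $A$ into connected components of $G'(N)$).

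For $\overline\D(N)$ the bound is essentially free: refining a partition cannot increase its sum of squared part-sizes (since $a^2+b^2\le(a+b)^2$ for $a,b\ge 0$), so
\[
S(\overline\D(N))\le S(\F'(N))=S(G'(N))=o(n)\quad\text{a.a.s.}
\]
For $\C(N)$ the same refinement inequality gives $S(\C'(N))\le S(G'(N))=o(n)$ a.a.s., but now $\C(N)$ is a \emph{coarsening} of $\C'(N)$: for each $b\in B$, the $r_b$ classes $P_1^b,\dots,P_{r_b}^b$ of $\C'(N)$ sharing root $b$ get merged into the single class $C^b(N)$. By Cauchy--Schwarz,
\[
|C^b(N)|^2=\Bigl(\sum_{i=1}^{r_b}|P_i^b|\Bigr)^2\le r_b\sum_{i=1}^{r_b}|P_i^b|^2,
\]
and summing over $b\in B$ yields $S(\C(N))\le(\max_{b\in B}r_b)\cdot S(\C'(N))$. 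Hence it would suffice to prove that $\max_{b\in B}r_b=O(1)$ a.a.s.

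The main obstacle is this bound on $\max_{b\in B}r_b$. Because sub-components of $C^b$ correspond to distinct children of $b$ in the tree $T^b$ and cannot merge later (the tree $T^b$ only grows), $r_b$ is equal to $\one[C^b(\tau)\ne\emptyset]$ plus the number of sub-components of $C^b$ created after time $\tau$. Each such new sub-component arises from an edge $e(t)=bv$ offered at some $t>\tau$ with $v\in A$ uncharged at that moment, which in particular forces $v$ to have received no earlier $B$-edge. Hence $r_b\le 1+\tilde r_b$, where $\tilde r_b$ counts the $v\in A$ for which $bv$ is the first $B$-edge incident to $v$. The vector $(\tilde r_b)_{b\in B}$ follows essentially a balls-in-bins type process with $|A|=\Theta(k)$ balls and $|B|=\Theta(n)$ bins, so by symmetry $\E\tilde r_b=O(k/n)=o(1)$, and I would try to show $\max_b\tilde r_b=O(1)$ a.a.s.\ via a Chernoff/Poisson-tail estimate combined with a union bound over $b\in B$. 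The subtle point is that at the extreme end of the regime $k=o(n)$ (say, $k$ approaching $n/\log n$) the raw balls-in-bins maximum can be as large as $\Theta(\log n/\log\log n)$; handling this corner will probably require exploiting the additional slack in $r_b\le 1+\tilde r_b$ (charging can also propagate through $A$-chains before any $B$-edge reaches $v$, so many potential contributions to $\tilde r_b$ are actually excluded), or else extracting from the hypothesis $S(G'(N))=o(n)$ a somewhat sharper quantitative bound that absorbs this logarithmic loss.
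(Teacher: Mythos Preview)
Your treatment of $S(\overline\D(N))$ is correct and matches the paper exactly: \cref{lem:coarser} plus monotonicity of $S$ under refinement.

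For $S(\C(N))$ you correctly reduce to $S(\C'(N))=o(n)$ via \cref{lem:coarser}, and then correctly identify that the remaining issue is controlling the coarsening from $\C'(N)$ to $\C(N)$. But your Cauchy--Schwarz route $S(\C(N))\le(\max_b r_b)\,S(\C'(N))$ runs into exactly the obstacle you flag, and it is a real one: when $k$ is close to $n/\log n$, the balls-in-bins maximum is genuinely $\Theta(\log n/\log\log n)$, and neither of your suggested workarounds closes the gap. The ``slack'' in $r_b\le 1+\tilde r_b$ does not help (your $\tilde r_b$ already over-counts, so the true $r_b$ is only smaller, but the balls-in-bins lower bound still applies to the \emph{actual} first-$B$-edge assignment). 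And the lemma's hypothesis only gives you a.a.s.\ $S(G'(N))=o(n)$ with no quantitative rate, so there is nothing to borrow against the logarithmic loss.

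The paper avoids this by replacing the worst-case $\max_b r_b$ with an average-case first-moment computation. The key observation is that, after conditioning on $\C'(N)$ (which is legitimate because $\C'(N)$ is determined by information disjoint from the relative ordering $\mathcal E$ of the $A$--$B$ edges), the root $r(C)$ of each part $C\in\C'(N)\setminus\Q$ is uniform over $B$ and independent across parts (here $\Q$ is the set of parts whose root was fixed already at time $\tau$). Then for each $C$ with $C^*\in\C(N)$ the part of $\C(N)$ containing it,
\[
\E\,|C^*|\le |C|+\sum_{C'\ne C}|C'|\,\Pr\bigl(r(C')=r(C)\bigr)\le |C|+\frac{1+o(1)}{|B|}\sum_{C'}|C'| = |C|+o(1),
\]
since $\sum_{C'}|C'|\le|A|=o(n)$. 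Summing over $v\in A$ gives $\E\,S(\C(N))\le S(\C'(N))+o(|A|)=o(n)$, and Markov's inequality finishes. So the missing idea is to exploit the \emph{randomness of the root assignment} (conditional on $\C'(N)$) rather than bounding the worst root-multiplicity; this turns an impossible $\ell^\infty$ estimate into an easy $\ell^1$ one.
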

\begin{proof}
First note that by \cref{lem:coarser}, a.a.s.\ $S(\overline\D(N))\le S(\F'(N))=o(n)$.

Next, let $\mathcal E$ be the relative ordering of all edges in the sequence $e(\tau+1),\dots,e(N)$ which are between $A$ and $B$. Recalling that we are conditioning on $e(1),\dots,e(\tau)$, and observing that $\C'(t)$ can only differ from $\C'(t-1)$ if $e(t)$ is between two vertices in $A$, note that $\C'(N)$ does not depend on $\mathcal E$. By \cref{lem:coarser}, we can assume that a.a.s.\ $S(\C'(N))\le S(\F'(N))=o\left(n\right)$, so condition on an outcome of $\C'(N)$ with this property. This does not change the distribution of $\mathcal E$.

Now, for $C\in \C'(N)$, let $r(C)$ be the common root of all elements of $C$, corresponding to the first edge that was offered between $C$ and $B$. For some $C\in \C'(N)$, we may have already seen an edge between $C$ and $B$ by time $\tau$, meaning that $r(C)$ is determined. Let $\Q$ be the set of such $C$. For all other $C$, by the randomness of $\mathcal E$, the root $r(C)$ is uniformly distributed in $B$, and these roots are independent of each other.

For each $C\in\C'(N)$, let $C^*\in \C(N)$ be charge component which includes $C$. We will estimate each $\E\,|C^*|$. To this end, note that
$$C^*=\!\!\bigcup_{\substack{\vspace{2px} C'\in \C'(N):\\r(C')=r(C)}}\!\!C'.$$
Now, in in the case where $C\notin \Q$, for each $C'\ne C$ we have $r(C)=r(C')$ with probability $1/|B|=(1+o(1))/n$, so
$$\E\,|C^*|\le|C|+(1+o(1))\sum_{C'\in \C'(N)}\! \frac{|C'|}n=|C|+o(1).$$
Alternatively, if $C\in \Q$ then $C$ is the only component in $\Q$ with root $r(C)$, and for each $C'\notin \Q$ we have $r(C)=r(C')$ with probability $1/|B|$, so
\begin{align*}
\E\,|C^*|&\le|C|+(1+o(1))\!\!\sum_{C'\in \C'(N)\setminus\Q}\! \frac{|C'|}n=|C|+o(1).
\end{align*}
Combining both cases, with $C_v\in\C'\left(N\right)$ as the part containing $v$ in $\C'\left(N\right)$, we have
$$\E\,S(\C(N))= \sum_{v\in A}\E\,|C_v^*|\le \sum_{v\in A}|C_v|+o\left(|A|\right)=O\left(S(\C'(N))\right)=o(n).$$
The desired result follows by Markov's inequality.
\end{proof}

Now, apart from the edges in $G'\left(\tau\right)$, note that each
edge in $G'\left(N\right)$ is present with probability $\Theta(1/n)$.
This is because the condition for including an edge $\{u,v\}$ is that it must be offered before any of the edges between $u$ and $B$, or before any of the edges between $v$ and $B$, and there are $\Theta(n)$ edges of both types. Although the edges of $G'(N)$ are not independent, we will see in the next subsection that we can nevertheless reduce our problem to a comparable problem concerning a certain binomial random graph where each edge is independently present with probability $1/n$. We remark that the susceptibility of the union of a random graph and a fixed sparse random-like graph has already been studied, by Spencer and Wormald \cite{SW} and by Bohman, Frieze, Krivelevich,
Loh and Sudakov \cite{BFKLS}. However our methods will be much simpler, and will resemble the proof of \cref{lem:o(n)-tau-S}.

\subsection{\label{subsec:o(n)-binomial}Reducing to a binomial random graph}

First we define intermediate random graphs $G^{\Unif}$, $G^{\Exp}$
on the vertex set $A$, which stochastically dominate $G'\left(N\right)$. Let $B'\subseteq B$ be a set of $n-o(n)$ isolated vertices in $G(\tau)$, let $\left(\eta_{a,b}\right)_{a\in A,\,b\in B'}$ and $\left(\eta_{e}\right)_{e\in\binom{A}{2}}$
be independent random variables uniformly distributed in the interval
$\left[0,1\right]$. Almost surely each is distinct, so these random
variables induce a uniformly random ordering of the possible edges
within $A$ and between $A$ and $B$. Put an edge $e=\left\{ v,w\right\} $
in $G^{\Unif}$ if $e\in G'\left(\tau\right)$, or if $\eta_{e}\le\left(\min_{b\in B'}\eta_{v,b}\right)\lor\left(\min_{b\in B'}\eta_{w,b}\right)$.
This graph is defined so that it stochastically dominates $G'\left(N\right)$.

Now, note that the uniform distribution $\Unif\left(0,1\right)$ is
stochastically dominated by the exponential distribution $\Exp\left(1\right)$
(one can see this by comparing cumulative distribution functions),
and recall that the minimum of $m$ independent $\Exp\left(1\right)$
random variables has the distribution $\Exp\left(m\right)$. So, let
$\left(\gamma_{v}\right)_{v\in A}$ be independent $\Exp\left(\left|B'\right|\right)$
random variables, and define $G^{\Exp}$ by putting an edge $e=\left\{ v,w\right\} $
in $G^{\Exp}$ if $e\in G'\left(\tau\right)$, or if $\eta_{e}\le\gamma_{v}\lor\gamma_{w}$.
Then $G^{\Exp}$ stochastically dominates $G^{\Unif}$.

\global\long\def\Gp{G^p}

Next, let $G^{*}$ be the graph obtained by starting with $G'\left(\tau\right)$
and blowing up each vertex $v$ into a clique of size $2\floor{\gamma_{v}n}+3$.
This means that each vertex is replaced with a clique, and two vertices
of $G^{*}$ in different cliques are adjacent if their corresponding
vertices in $G'\left(\tau\right)$ were adjacent. Given $G^{*}$ (with vertex set $V^*$, say), let $\Gp$ be a random graph on the same vertex set $V^*$, where each of the $\binom{|V^*|}2$ edges is independently present with probability $1/n$. We will
show that in a certain sense $G^{*}\cup \Gp$ dominates $G^{\Exp}$.
\begin{lem}
\label{lem:blowup-dominates}Let $E$ be the event that each $\gamma_{v}\le2\log n/n$,
and condition on $\left(\gamma_{v}\right)_{v\in A}$ satisfying $E$.
For each $v,w\in A$ with $v\ne w$, let $p_{v,w}$ be the probability
that there is at least one edge between the blowup of $v$ and the
blowup of $w$, in $G$. Then $p_{v,w}\ge\gamma_{v}\lor\gamma_{w}$.
\end{lem}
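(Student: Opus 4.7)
The claim is purely an arithmetic fact about a product of independent Bernoulli trials, so I would aim for a short direct calculation. Without loss of generality assume $\gamma_{v}\ge \gamma_{w}$, so the goal reduces to showing $p_{v,w}\ge\gamma_{v}$. If $vw$ is already an edge of $G'(\tau)$ then every pair between the two blowups is present in $G^{*}$ and $p_{v,w}=1$, so assume $vw\notin G'(\tau)$; all potential edges between the two blowups then come independently from $\Gp$.

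Writing $k_{u}=2\floor{\gamma_{u}n}+3$ for the size of the blowup of $u$, the number of possible edges between the blowups of $v$ and $w$ is $k_{v}k_{w}$, each independently present with probability $1/n$, so
\[
p_{v,w}=1-\left(1-\tfrac{1}{n}\right)^{k_{v}k_{w}}.
\]
Bounding crudely, $k_{v}\ge 2\gamma_{v}n+1$ (from $\floor{\gamma_{v}n}\ge \gamma_{v}n-1$) and $k_{w}\ge 3$, so $k_{v}k_{w}\ge 6\gamma_{v}n$. Combining this with the standard estimate $(1-1/n)^{x}\le e^{-x/n}$ gives
\[
p_{v,w}\ge 1-e^{-6\gamma_{v}}.
\]

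It remains to verify that $1-e^{-6\gamma_{v}}\ge\gamma_{v}$. Since we are conditioning on $E$, we have $\gamma_{v}\le 2\log n/n$, which is at most $1/2$ for $n$ large enough. The elementary inequality $-\log(1-x)\le x/(1-x)$, valid for $x\in[0,1)$, yields $-\log(1-\gamma_{v})\le 2\gamma_{v}\le 6\gamma_{v}$, which rearranges to exactly the desired $1-e^{-6\gamma_{v}}\ge\gamma_{v}$.

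There is no real obstacle here; the lemma is essentially a sanity check that the blowup sizes $2\floor{\gamma_{v}n}+3$ were chosen large enough (note the role of the additive $+3$, which provides the factor $k_{w}\ge 3$ used above) that inserting the independent $1/n$-random edges of $\Gp$ between two blowups recovers at least the $\gamma_{v}\lor\gamma_{w}$ edge-density appearing in $G^{\Exp}$. The upper bound on $\gamma_{v}$ coming from conditioning on $E$ is what keeps us in the linear regime of $1-e^{-x}$, so the constants comfortably work.
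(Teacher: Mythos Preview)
Your proof is correct and follows essentially the same approach as the paper: both compute $p_{v,w}=1-(1-1/n)^{k_vk_w}$ and bound it below by an elementary inequality, using the conditioning on $E$ to stay in the linear regime. The only cosmetic difference is that the paper uses the Taylor-type estimate $(1-p)^x\le 1-px(1-px)$ together with the upper bound $k_vk_w\le 17\log^2 n$ to obtain the slightly stronger $p_{v,w}\ge\gamma_v+\gamma_w$, whereas you use $(1-1/n)^x\le e^{-x/n}$ and the crude lower bound $k_vk_w\ge 6\gamma_v n$; either route suffices.
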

\begin{proof}
We have
\[
p_{v,w}:=1-\left(1-1/n\right)^{\left(2\floor{\gamma_{v}n}+3\right)\left(2\floor{\gamma_{w}n}+3\right)}.
\]
By Taylor's theorem (expanding around $p=0$), for all $p,x\ge 0$,
\[
\left(1-p\right)^{x}\le1-px+p^{2}x(x-1)/2\le 1-px\left(1-px\right),
\]
and since each $\gamma_{v}\le2\log n/n$, we have $\left(2\floor{\gamma_{v}n}+3\right)\left(2\floor{\gamma_{w}n}+3\right)\le17\log^{2}n$.
So,
\begin{align*}
p_{v,w} & \ge\left(1-17\log^{2}n/n\right)\left(2\floor{\gamma_{v}n}+3\right)\left(2\floor{\gamma_{w}n}+3\right)/n\\
 & >\frac{1}{2}\left(2\gamma_{v}n+1\right)\left(2\gamma_{w}n+1\right)/n\\
 & \ge\gamma_{v}+\gamma_{w} \ge\gamma_{v}\lor\gamma_{w},
\end{align*}
as desired.
\end{proof}
\cref{lem:blowup-dominates} shows that $G^{\Exp}$ and $G^{*}\cup \Gp$
can be coupled in such a way that $S\left(G^{*}\cup \Gp\right)\ge S\left(G^{\Exp}\right)$
whenever $E$ holds. Note that $\Pr\left(\gamma_{v}>x\right)=e^{-\left|B'\right|x}$,
so $\Pr\left(\gamma_{v}>2\log n/n\right)=o\left(1/n\right)$, and
in fact $E$ a.a.s.\ holds. Recalling that $G^{\Exp}$ stochastically
dominates $G'\left(N\right)$, to prove \cref{thm:o(n)} via \cref{lem:o(n)-main-reduction,lem:o(n)-simplified-process-reduction,lem:o(n)-eliminate-fix}
it suffices to prove the following lemma.
\begin{lem}
\label{lem:o(n)-S}A.a.s.\ $S\left(G^{*}\cup \Gp\right)=o\left(n\right)$.
\end{lem}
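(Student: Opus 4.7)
The plan is a two-step Markov argument: first I would show $S(G^{*}) = o(n)$ a.a.s., and then, conditional on $G^{*}$, that $\E[S(G^{*}\cup G^p)\mid G^{*}] \le (1+o(1))\,S(G^{*})$, allowing a second appeal to Markov's inequality to conclude.

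For the first step, recall that $G'(\tau)$ is by definition the disjoint union of cliques on the generalised components $\F(\tau)$, so each component of $G^{*}$ is a blowup $K_F$ of some $F\in\F(\tau)$, with $|K_F| = \sum_{V\in F} m_V$ where $m_V = 2\lfloor \gamma_V n\rfloor + 3$. A direct second-moment computation using $\gamma_V\sim \Exp(|B'|)$ with $|B'| = (1-o(1))n$ yields $\E\,m_V^2 = O(1)$, and Cauchy--Schwarz gives $|K_F|^2 \le |F|\sum_{V\in F} m_V^2$, so
\[
\E\, S(G^{*}) = \sum_F \E\, |K_F|^2 \le O(1)\cdot \sum_F |F|^2 = O(S(\F(\tau))).
\]
Each $F\in\F(\tau)$ is contained in a connected component of $G(\tau)$, and a short case split (by whether $|F|=1$ or $|F|\ge 2$) gives $S(\F(\tau)) \le \tilde S(G(\tau)) + |A|$, which is $o(n)$ from the conditioning in \cref{subsec:o(n)-initial} together with $|A| = 2(k-1) = o(n)$. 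Markov then yields $S(G^{*}) = o(n)$ a.a.s.

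For the second step, I would condition on an outcome of $G^{*}$ with $S(G^{*}) = o(n)$, and consider the auxiliary random graph $H^{*}$ on the components $K_1,K_2,\dots$ of $G^{*}$ in which $K_i\sim K_j$ iff $G^p$ contains at least one edge between them; conditional on $G^{*}$ this happens independently across pairs, with probability at most $1-(1-1/n)^{|K_i||K_j|} \le |K_i||K_j|/n$. Components of $G^{*}\cup G^p$ correspond precisely to components of $H^{*}$, so writing $M_j$ for the total mass of the component of $K_j$ in $H^{*}$ we have $S(G^{*}\cup G^p) = \sum_j |K_j| M_j$. Mimicking the tree-like argument of \cref{lem:o(n)-tau-S}, by exposing $K_j$'s neighbourhood in $H^{*}$ and using that the mass $M_i^{(j)}\le M_i$ of $K_i$'s component in $H^{*}-K_j$ is independent of the edges at $K_j$, I would obtain
\[
\E[M_j \mid G^{*}] \le |K_j| + \frac{|K_j|}{n}\sum_i |K_i|\,\E[M_i \mid G^{*}].
\]
Multiplying by $|K_j|$ and summing closes the inequality: writing $T := \E[S(G^{*}\cup G^p)\mid G^{*}]$, we get $T \le S(G^{*}) + (S(G^{*})/n)\,T$, whence $T \le S(G^{*})/(1 - S(G^{*})/n) = (1+o(1))\,S(G^{*}) = o(n)$. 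A conditional Markov's inequality then completes the proof.

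The only delicate point — and what I would call the main obstacle — is justifying the closure step of the second part, namely that after conditioning on $G^{*}$ the auxiliary variables $M_i^{(j)}$ are genuinely independent of $K_j$'s neighbourhood in $H^{*}$ (so that the bound $\E[M_i^{(j)}\mid G^{*}]\le \E[M_i\mid G^{*}]$ plugs in cleanly). Everything else reduces to routine exponential moment computations and an appeal to the facts already collected in \cref{subsec:o(n)-initial}.
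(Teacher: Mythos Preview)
Your proposal is correct and follows essentially the same two-step Markov approach as the paper: first bound $\E S(G^{*})$ via moment computations and the conditioning on $\tilde S(G(\tau))=o(n)$, then condition on an outcome of $G^{*}$ with $S(G^{*})=o(n)$ and run a branching-type recursion (mirroring \cref{lem:o(n)-tau-S}) to show $\E[S(G^{*}\cup G^{p})\mid G^{*}]\le (1+o(1))S(G^{*})$. The only cosmetic difference is that the paper sets up the recursion per vertex of $G^{*}$ (defining $Y_v$ as the component size of $v$ in $G_v^{*}\cup G^{p}$), whereas you contract components of $G^{*}$ and work on the weighted auxiliary graph $H^{*}$; your ``delicate point'' is not actually delicate, since distinct edges of $H^{*}$ depend on disjoint sets of potential $G^{p}$-edges and are therefore genuinely independent, making $M_i^{(j)}$ independent of $K_j$'s $H^{*}$-neighbourhood as required.
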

\begin{proof}
For each connected component $F\in \F'(\tau)$, let $X_F=\sum_{v\in F}\gamma_v$.
 The size of the corresponding blown-up component in $G^{*}$ is $3|F|+2n\sum_{v\in F} \floor{\gamma_v}\le 3|F|+2nX_F$. Now, $\E\gamma_v=1/|B'|$ and $\Var\gamma_v=1/|B'|^2$, so $\E X_{F}=\left|F\right|/\left|B'\right|$, $\Var X_{F}=\left|F\right|/\left|B'\right|^{2}$ and $\E X_F^{2}=\left|F\right|\left(\left|F\right|+1\right)/\left|B'\right|^{2}$.
Recalling that $\left|B'\right|=n-o\left(n\right)$, we have 
\[
\E\left(3\left|F\right|+2nX_{F}\right)^{2}=9\left|F\right|^{2}+12n\left|F\right|\E X_{F}+4n^{2}\E X_{F}^{2}=O\left(\left|F\right|^{2}\right).
\]
The components of $G'\left(\tau\right)$ are subsets of nontrivial
components of $G\left(\tau\right)$, so $S\left(G'\left(\tau\right)\right)\le\tilde{S}\left(G\left(\tau\right)\right)=o\left(n\right)$,
and
\[
\E S\left(G^{*}\right)=\sum_{F\in\F'(\tau)}\E\left(3\left|F\right|+2nX_{F}\right)^{2}=O\left(\sum_{F\in\F'(\tau)}\left|F\right|^{2}\right)=O\left(S\left(G'\left(\tau\right)\right)\right)=o\left(n\right).
\]
By Markov's inequality, a.a.s.\ $S\left(G^{*}\right)=o\left(n\right)$,
so for the rest of the proof we condition on an outcome of $G^{*}$
satisfying this property.

Now we proceed in a similar way to the proof of \cref{lem:o(n)-tau-S}.
Let $F_{v}$ be the component of $v$ in $G^{*}$, let $G_{v}^{*}$
be obtained from $G^{*}$ by deleting all edges incident to $v$,
and let $Y_{v}$ be the size of the component of $v$ in $G_{v}^{*}\cup \Gp$.
Let $Y_{w}^{v}\le Y_{w}$ be the size of the component of $w$ in
$\left(G_{w}^{*}\cup \Gp\right)-v$. Conditioning on $N_{\Gp}\left(v\right)$,
we have
\[
Y_{v}\le1+\!\!\sum_{u\in N_{\Gp}\left(v\right)}\;\sum_{w\in F_{u}}Y_{w}^{v}.
\]
Note that $Y_{w}^{v}$ does not depend on $N_{\Gp}\left(v\right)$,
so $\E\left[Y_{w}^{v}\cond N_{\Gp}\left(v\right)\right]=\E Y_{w}^{v}\le\E Y_{w}$
for all $w\ne v$. If $v$ is chosen to maximise $\E Y_{v}$, we have
\begin{align*}
\E\left[Y_{v}\cond N_{\Gp}\left(v\right)\right] & \le1+\!\!\sum_{u\in N_{\Gp}\left(v\right)}\;\sum_{w\in F_{u}}\E Y_{w}\\
& \le1+\E Y_v\!\!\sum_{u\in N_{\Gp}\left(v\right)}\!\!|F_u|,\\
\E Y_{v} & \le1+\left(S\left(G^{*}\right)/n\right)\,\E Y_{v},\\
\left(1-o\left(1\right)\right)\,\E Y_{v} & \le1,\\
\E Y_{v} & =1+o\left(1\right).
\end{align*}
Then, observe that the size of the component of $w$ in $G^{*}\cup \Gp$
is at most $\sum_{u\in F_{w}}Y_{u}$, so its expected size is $(1+o(1))|F_w|$ and
\[
\E S\left(G^{*}\cup \Gp\right)=\left(1+o\left(1\right)\right)S\left(G^{*}\right)=o\left(n\right).
\]
The desired result follows by Markov's inequality.
\end{proof}

\section{\label{sec:negative-o(n)}The negative part of \texorpdfstring{\cref{thm:o(n)}}{Theorem~\ref{thm:o(n)}}}

In this section we prove that, for any fixed $\varepsilon>0$, if $\varepsilon n\le k\le\left(1-\varepsilon\right)n/2$,
then $G\left(N\right)\ncong\EGs$ with probability $\Omega(1)$. This suffices, because the case where $n-2k=o(n)$ is handled by \cref{thm:not-extremal}. Define $\tau$ as in \cref{sec:o(n)}.
\begin{lem}
\label{lem:n-tau-bound}There is $R=R\left(\varepsilon\right)$ such
that a.a.s.\ $\tau\le Rn$.
\end{lem}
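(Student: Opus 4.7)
The plan is to exploit the fact that, by definition of $\tau$, the process is unconstrained up to time $\tau$: we have $G(t) = G^{\all}(t) \in \GG(n,t)$ for every $t \le \tau$. Hence $\tau \le Rn$ holds if and only if $\nu(G^{\all}(Rn)) \ge k-1$, and since $k-1 \le (1-\varepsilon)n/2$, it suffices to exhibit $R = R(\varepsilon)$ such that a.a.s.\ $\nu(G^{\all}(Rn)) \ge (1-\varepsilon)n/2$.

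I would prove this by analysing the greedy matching on the ordered edge sequence $e(1), \ldots, e(Rn)$: maintain a matching $M_t$, and add $e(t)$ to $M_{t-1}$ whenever both endpoints of $e(t)$ are currently unmatched. Let $S_t = |M_t|$ and let $I_t = S_t - S_{t-1} \in \{0,1\}$. The key observation is that, at step $t$, every pair of currently unmatched vertices must be unseen so far: if such a pair $\{u,v\}$ had been offered at an earlier step $s < t$, then both $u$ and $v$ would already have been unmatched at step $s-1$, so the greedy rule would have added $\{u,v\}$ to the matching, a contradiction. Therefore, as long as $S_{t-1} < (1-\varepsilon)n/2$ and $t \le Rn$,
\[
\Pr\left[I_t = 1 \cond e(1),\ldots,e(t-1)\right] \;=\; \frac{\binom{n - 2S_{t-1}}{2}}{N - t + 1} \;\ge\; \frac{\varepsilon^2}{2},
\]
for all sufficiently large $n$ (since $n - 2S_{t-1} \ge \varepsilon n$ and $N - t + 1 \le n^2/2$).

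Let $T = \min\{t : S_t \ge \lceil (1-\varepsilon)n/2 \rceil\}$. I would then apply Azuma--Hoeffding to the Doob martingale $\widetilde M_t = S_t - \sum_{s=1}^{t} \Pr[I_s = 1 \mid e(1),\ldots,e(s-1)]$, whose per-step increments lie in $[-1,1]$, to conclude that $|S_{Rn} - \sum_{t=1}^{Rn} \Pr[I_t = 1 \mid \cdot]| = O(\sqrt{n \log n})$ a.a.s. If $T > Rn$, then the displayed inequality yields $\sum_{t=1}^{Rn}\Pr[I_t = 1 \mid \cdot] \ge Rn\varepsilon^2/2$, hence $S_{Rn} \ge Rn\varepsilon^2/2 - O(\sqrt{n \log n})$. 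Choosing $R = 2/\varepsilon^2$ makes this lower bound exceed $n/2$ for $n$ large, contradicting the trivial upper bound $S_{Rn} \le n/2$ on the matching number. Thus $T \le Rn$ a.a.s., which gives $\nu(G^{\all}(Rn)) \ge S_{Rn} \ge k-1$ and so $\tau \le Rn$ a.a.s. The only nontrivial ingredient is the concentration bound, but this is routine because the increments $I_t$ are $\{0,1\}$-valued, so Azuma--Hoeffding applies to the Doob martingale without complication.
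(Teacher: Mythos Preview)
Your argument is correct and yields the same quantitative dependence $R = \Theta(\varepsilon^{-2})$, but the paper takes a different and shorter route. Instead of tracking a greedy matching and invoking martingale concentration, the paper observes that the set of vertices left uncovered by a \emph{maximum} matching is necessarily independent, and then shows by a direct first-moment calculation that $\GG(n,R/n)$ a.a.s.\ has no independent set of size $\varepsilon n$ once $R$ is a large enough multiple of $\varepsilon^{-2}$: the probability that a fixed $\varepsilon n$-set is independent is at most $(1-R/n)^{\binom{\varepsilon n}{2}} \le e^{-\Omega(\varepsilon^{2}Rn)} = o(2^{-n})$, and a union bound over all $\le 2^{n}$ such sets finishes. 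This avoids any concentration machinery. Your approach is more hands-on and does not rely on the structural fact linking matchings to independent sets (and the observation that every pair of greedily-unmatched vertices must still be an unseen edge is a pleasant one), but the paper's proof is essentially two lines.
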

\begin{proof}
We prove that in the Erd\H os-R\'enyi random graph $G^{\all}(Rn)\in \GG(n,Rn)$ there is a.a.s.\ a $k$-matching. Using \cite[Proposition~1.15]{JLR}, it actually suffices to show this for a
binomial random graph $G\in\GG\left(n,R/n\right)$. To do this, we prove that there is no independent
set of size $\varepsilon n$. Indeed, for any set of $\varepsilon n$
vertices, the probability that the set is independent is

\[
\left(1-R/n\right)^{{\varepsilon n \choose 2}}\le e^{-\Omega\left(\varepsilon^{2}Rn\right)}.
\]
If $R$ is much larger than $\varepsilon^{-2}$, then this probability
is $o\left(2^{-n}\right)$, so the union bound says that a.a.s.\ $G$
has no independent set of size $\varepsilon n$, as desired.
\end{proof}
Now, say a triangle in a graph is \emph{isolated }if there are no
edges between the triangle and the rest of the graph.
\begin{lem}
\label{lem:isolated-triangle}$G\left(\tau\right)$ has an isolated
triangle with probability $\Omega\left(1\right)$.
\end{lem}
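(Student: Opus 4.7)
The plan is to produce an isolated triangle during the Erd\H{o}s--R\'enyi phase of the process, and show that it survives to time $\tau$. The key fact is that before time $\tau$ no edges are rejected, so $G(t)=G^{\all}(t)\sim\GG(n,t)$ is the ordinary Erd\H{o}s--R\'enyi random graph process. In $\GG(n,\Theta(n))$ the expected number of isolated triangles is already a positive constant, so we only need to argue that with positive probability such a triangle exists and survives (as an isolated triangle) for the remaining $\Theta(n)$ steps before the matching number reaches $k-1$.

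Fix a small constant $\delta\in(0,\varepsilon)$ and set $m=\lfloor\delta n\rfloor$. The matching number of any graph with $m$ edges is at most $m$, and $m<k-1$ for $n$ large, so $\tau>m$ deterministically. Let $R=R(\varepsilon)$ be as in \cref{lem:n-tau-bound}, and let $Y$ be the number of triangles $T\subseteq[n]$ such that the three edges of $T$ appear among $e(1),\dots,e(m)$ and none of the $3(n-3)$ ``dangerous'' edges (those incident to $T$ but not within $T$) appear among $e(1),\dots,e(Rn)$. A direct computation with the uniform random ordering gives
\[
\Pr(T\text{ is counted by }Y)=\frac{\binom{m}{3}}{\binom{N}{3}}\cdot\frac{\binom{N-Rn}{3(n-3)}}{\binom{N-3}{3(n-3)}}=(1+o(1))\,\frac{8\delta^{3}e^{-6R}}{n^{3}},
\]
so $\E Y=(1+o(1))\tfrac{4}{3}\delta^{3}e^{-6R}=\Omega(1)$. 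Any two triangles counted by $Y$ must be vertex-disjoint (an isolated triangle in $G^{\all}(m)$ cannot share a vertex with another triangle in $G^{\all}(m)$), and the analogous computation for disjoint pairs $T,T'$ yields $\sum_{T\ne T'}\Pr(T,T'\text{ both counted})=(1+o(1))(\E Y)^{2}$. Hence $\E Y^{2}=O((\E Y)^{2})$, and Paley--Zygmund gives $\Pr(Y\ge 1)=\Omega(1)$.

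By \cref{lem:n-tau-bound} we have $\Pr(\tau\le Rn)=1-o(1)$, so $\Pr(Y\ge 1,\,\tau\le Rn)\ge\Omega(1)$. On this event the three edges of a witness triangle $T$ belong to $G(\tau)$ (having been added by time $m\le\tau$), while no dangerous edge for $T$ belongs to $G(\tau)\subseteq G^{\all}(Rn)$; hence $T$ is an isolated triangle in $G(\tau)$, as required. The main technical subtlety will be the pair-correlation step in the second-moment computation: two vertex-disjoint candidate triangles $T,T'$ share nine cross-edges (each dangerous for both), so one must carefully track the overlap in the exponential factor, but the numerics work out to $(1+o(1))(\E Y)^{2}$ as claimed.
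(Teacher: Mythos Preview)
Your proof is correct and takes essentially the same approach as the paper: a second-moment argument showing that an isolated triangle appears early in the Erd\H os--R\'enyi phase and survives until time $\tau$, using the sandwich $m\le\tau\le Rn$. The paper's packaging differs only cosmetically---it works with two coupled binomial random graphs $G_1\in\GG(n,\varepsilon/n)$ and $G_2\in\GG(n,3R/n)$ satisfying $G_1\subseteq G(\tau)\subseteq G_1\cup G_2$, and splits the argument into ``$G_1$ has an isolated triangle'' and ``that triangle survives in $G_1\cup G_2$'' as two separate $\Omega(1)$-probability events, rather than bundling both conditions into a single random variable $Y$ as you do.
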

\begin{proof}
Let $R$ be as in \cref{lem:n-tau-bound}, and consider independent
binomial random graphs $G_1\in\GG\left(n,p_1\right)$ and $G_2\in\GG\left(n,p_2\right)$,
with $p_1=\varepsilon/n$ and $p_2=3R/n$. For any $m$, the distribution of $G_1$ (respectively $G_1\cup G_2$) conditioned on the event that $e(G_1)=m$ (respectively, that $e(G_1\cup G_2)=m$), is precisely the Erd\H os-R\'enyi distribution $\GG(n,m)$. Note that a.a.s.\ $G_1$ has
fewer than $k-1\le \tau$ edges, and a.a.s.\ $G_2$ has at least $Rn$ edges.
So, $G_1$, $G_2$ and $\left(G\left(t\right)\right)_{t}$ can be coupled
together such that  a.a.s.\ $G_1=G^{\all}(e(G_1))\subseteq G\left(\tau\right)$ and $G_1\cup G_2=G^{\all}(e(G_1\cup G_2))\supseteq G\left(\tau\right)$.
We will prove that $G_1$ has an isolated triangle with probability
$\Omega\left(1\right)$, and we will then show that conditioning on
an outcome of $G_1$ with an isolated triangle, that triangle remains
isolated in $G_1\cup G_2$ with probability $\Omega\left(1\right)$.

Let $X$ be the number of isolated triangles in $G_1$, so that
\[
\E X={n \choose 3}p_1^{3}\left(1-p_1\right)^{3\left(n-3\right)}=\left(1+o\left(1\right)\right)\frac{\varepsilon^{3}}{6}e^{-3\varepsilon}=\Theta\left(1\right),
\]
and similarly
\begin{align*}
\E\left[X\left(X-1\right)\right] & ={n \choose 3}{n-3 \choose 3}p_1^{6}\left(1-p_1\right)^{6\left(n-3\right)+9}=\Theta\left(1\right),\\
\E X^{2} & =\E\left[X\left(X-1\right)\right]+\E X=\Theta\left(1\right).
\end{align*}
So, $\left(\E X\right)^{2}/\E X^{2}  =\Omega\left(1\right)$, 
and by the second moment method, $G_1$ has an isolated triangle with
probability $\Omega\left(1\right)$. Now, condition on an outcome
of $G_1$ with an isolated triangle $T$. The probability $T$ is still
isolated in $G_1\cup G_2$ is $\left(1-p_2\right)^{3n-9}=\Omega\left(1\right)$.
\end{proof}
Now, if $G\left(\tau\right)$ has an isolated triangle, then $G\left(N\right)\ncong\EGs$.
Indeed, fix an isolated triangle $T$ and a maximum $\left(k-1\right)$-matching
$M$ in $G\left(\tau\right)$. Observe that exactly one edge of $T$
is used in $M$. We can never accept an edge between $T$ and the
vertices not involved in $M$, because this would create an augmenting
path. So, in $G\left(N\right)$ the vertices of $T$ have degree less
than $n-1$, while in $\EGs$ there is no edge between vertices of
degree less than $n-1$. Therefore $G\left(N\right)\ncong\EGs$, as
desired.

\section{\label{sec:clique}Proof of \texorpdfstring{\cref{thm:clique}}{Theorem~\ref{thm:clique}}}

Let $f=n-2k$. In this section, we define a new hitting time: 
\[
\tau=\min\left\{ t:G\left(t\right)\text{ has exactly }f+1\text{ isolated vertices}\right\} .
\]
First we show that a.a.s.\ $\tau$ actually occurs.
\begin{lem}
\label{lem:tau-inf}
If $f=o\left(n\right)$ then a.a.s.\ $\tau<\infty$. Moreover, a.a.s.\ for $t\le \tau$, each $e(t)$ is accepted.
\end{lem}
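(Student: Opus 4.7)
The plan is to compare the $k$-matching-free process with the unconstrained Erd\H os-R\'enyi process $G^{\all}(t)$ via the analogous hitting time $\tau^{\all}:=\min\{t:G^{\all}(t)\text{ has exactly }f+1\text{ isolated vertices}\}$ (which may be infinite). The argument splits into a short deterministic reduction and a single symmetry-based probability estimate.

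For the deterministic part, I would show that on the event $\tau^{\all}<\infty$, one has $\tau=\tau^{\all}$ and every $e(t)$ with $t\le\tau$ is accepted. The key observation is that for any $t\le\tau^{\all}$, $G^{\all}(t)$ has at least $f+1$ isolated vertices, hence at most $n-f-1=2k-1$ non-isolated vertices, and therefore matching number at most $k-1<k$. A straightforward induction on $t$ then gives $G(t)=G^{\all}(t)$ for all $t\le\tau^{\all}$: if $G(t-1)=G^{\all}(t-1)$, then $G(t-1)+e(t)=G^{\all}(t)$ has matching number $<k$, so $e(t)$ is accepted.

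It remains to show $\tau^{\all}<\infty$ a.a.s. The isolated-vertex count of $G^{\all}(t)$ is weakly decreasing in $t$, drops by at most $2$ per step, and reaches $0$ by step $N$. Hence $\tau^{\all}=\infty$ precisely when the count drops from $f+2$ directly to $f$ at some step (a drop by $2$ starting from count $\ge f+3$ necessarily lands at $\ge f+1$, so cannot skip $f+1$); in particular, this bad event requires the count to first reach $f+2$. Conditional on the graph $G^{\all}(\sigma)$ at the first such step $\sigma$, with isolated set $S$ of size $f+2$, the sequence $e(\sigma+1),e(\sigma+2),\ldots$ is a uniformly random ordering of the remaining pairs; the edges incident to $S$ have not yet been offered (since $S$ is isolated), and consist of $\binom{f+2}{2}$ edges with both endpoints in $S$ and $(f+2)(n-f-2)$ edges with exactly one. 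By the standard fact that the first element of a fixed subset in a uniformly random permutation is uniform on that subset, the probability that the next count-changing edge has both endpoints in $S$ is
\[
\frac{\binom{f+2}{2}}{\binom{f+2}{2}+(f+2)(n-f-2)}=\frac{f+1}{2n-f-3}=o(1)
\]
since $f=o(n)$. Hence the bad event has probability $o(1)$ and $\tau^{\all}<\infty$ a.a.s., which combined with the deterministic reduction gives the lemma.

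No concentration inequalities or delicate estimates are needed; the only slightly subtle point is recognising that everything reduces to the single symmetry computation at the moment the count first reaches $f+2$, after which the rest of the process is irrelevant.
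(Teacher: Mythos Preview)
Your proof is correct and follows essentially the same approach as the paper. The paper defines the auxiliary hitting time $\tau'=\min\{t:G(t)\text{ has at most }f+2\text{ isolated vertices}\}$ on the constrained process rather than your $\tau^{\all}$ on the unconstrained process, but the content is identical: both argue that everything is accepted while the isolated-vertex count exceeds $f+2$, and then compare the $O(f^2)$ ``bad'' edges within the isolated set to the $\Omega(nf)$ ``good'' edges leaving it to conclude the count a.a.s.\ lands on $f+1$ rather than skipping to $f$.
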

\begin{proof}
Let
\[
\tau'=\min\left\{ t:G\left(t\right)\text{ has at most }f+2\text{ isolated vertices}\right\} .
\]
For $t\le\tau'$, each $G(t-1)$ has more than $f+2$ isolated vertices, so has no matching of size $k-1$, meaning that every edge $e\left(t\right)$ is accepted (so $\tau'<\infty$).
Adding an edge to a graph can destroy at most two isolated vertices,
so $G\left(\tau'\right)$ has $f+1$ or $f+2$ isolated vertices.
Now, condition on any outcome for $G\left(\tau'\right)$ with $f+2$
isolated vertices, and let $W$ be the set of these isolated vertices.
There are only $O\left(f^{2}\right)$ pairs of isolated vertices, but there are
$\left(f+2\right)\left(n-\left(f+2\right)\right)=\Omega \left(nf\right)=\omega \left(f^{2}\right)$
choices of an isolated and non-isolated vertex. Therefore we will
a.a.s.\ be offered an edge between $W$ and $V\setminus W$ before
we are ever offered an edge between isolated vertices. This edge (and all edges preceding it) will
be accepted, resulting in a graph with $f+1$ isolated vertices.
\end{proof}
We now compute the approximate value of $\tau$ so that we may compare
$G\left(\tau\right)$ to standard Erd\H os-R\'enyi/binomial random graphs. For $h>0$
define 
\[
t_{h}^{-}=\floor{\left(\log n-\log\left(f+1\right)-h\right)\frac{n}{2}},\quad t_{h}^{+}=\ceil{\left(\log n-\log\left(f+1\right)+h\right)\frac{n}{2}}.
\]

\begin{lem}
\label{lem:clique-tau-range}Suppose $f=o(n)$.
Then for any $h\to\infty$, a.a.s.\ $t_{h}^{-}\le\tau\le t_{h}^{+}$.
\end{lem}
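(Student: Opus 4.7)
The plan is to couple the $k$-matching-free process with the Erd\H os--R\'enyi process $G^{\all}(t)\in\GG(n,t)$, using \cref{lem:tau-inf} to a.a.s.\ identify them for $t\le\tau$, and then to analyze the number of isolated vertices $I(t)$ in $G^{\all}(t)$ by transferring to the binomial random graph $\GG(n,p)$ with $p=t/\binom{n}{2}$ via \cite[Proposition~1.15]{JLR} (the relevant events are monotone). A slight augmentation of the proof of \cref{lem:tau-inf}, reusing the same $O(f^2)$-versus-$\Omega(nf)$ edge-count comparison, shows that $G^{\all}$ itself a.a.s.\ does not ``skip'' $I=f+1$: when $I$ first drops to $f+2$, the next edge decreasing $I$ is a.a.s.\ one of the $\Omega(nf)$ edges between $W$ and $V\setminus W$, rather than one of the $O(f^2)$ edges inside $W$. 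Hence a.a.s.\ $\tau=\min\{t:I(t)\le f+1\}$ in the ER process, and it suffices to show that $I(t_h^-)\ge f+2$ and $I(t_h^+)\le f+1$ a.a.s.\ in $\GG(n,t)$.

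Standard first- and second-moment computations in $\GG(n,p)$ give
\[
\E I=n(1-p)^{n-1}\sim n\exp\bigl(-(n-1)p\bigr),\qquad \E[I(I-1)]=n(n-1)(1-p)^{2n-3},
\]
whence $\Var I/(\E I)^2=1/\E I+O(p)$. At $t=t_h^\pm$, we have $(n-1)p=\log(n/(f+1))\mp h+o(1)$, so $\E I(t_h^\pm)\sim (f+1)e^{\mp h}$.

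The two tails now follow. For the upper bound, Markov's inequality gives $\Pr(I(t_h^+)\ge f+1)\le \E I(t_h^+)/(f+1)\sim e^{-h}=o(1)$, so a.a.s.\ $\tau\le t_h^+$. For the lower bound, $\E I(t_h^-)=(f+1)e^h\to\infty$ as $h\to\infty$, while $p=O(\log n/n)=o(1)$, so the above variance estimate is $o(1)$; Chebyshev's inequality then yields $I(t_h^-)=(1+o(1))\E I(t_h^-)\gg f+1$, hence a.a.s.\ $\tau>t_h^-$. I do not foresee any serious obstacle here; the only slightly delicate point is verifying the no-skip reduction, which closely mirrors the edge-counting already used in the proof of \cref{lem:tau-inf}.
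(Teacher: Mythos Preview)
Your proof is correct and follows essentially the same approach as the paper's: first- and second-moment estimates for the number of isolated vertices in $\GG(n,p)$, transferred to $\GG(n,t)$ via \cite[Proposition~1.15]{JLR}, combined with \cref{lem:tau-inf} to identify $G(t)$ with $G^{\all}(t)$ for $t\le\tau$. Your separate no-skip argument is in fact unnecessary, since \cref{lem:tau-inf} already yields (a.a.s.) that $G^{\all}(\tau)=G(\tau)$ has exactly $f+1$ isolated vertices, from which $\tau=\min\{t:I(t)\le f+1\}$ is immediate.
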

\begin{proof}
We can assume $h$ is sufficiently slowly-growing so that $t_h^-=\omega(n)$. Now, let $X_{p}$ be the number of isolated vertices in $\GG\left(n,p\right)$. If $np\to\infty$ and $np=O(\log n)$ then
\begin{align*}
\E X_{p} & =n\left(1-p\right)^{n-1}\\
 & =n\left(e^{-p}+O\left(p^{2}\right)\right)^{n-1} =ne^{-pn}\left(1+O\left(p^{2}\right)\right)^{n}\\
 & =ne^{-pn}+O\left(n^{2}p^{2}e^{-pn}\right) =(1+o(1))ne^{-pn}.
\end{align*}
Note that $npe^{-np}=o(1)$, so a similar calculation gives
\[
\E\left[X_{p}\left(X_{p}-1\right)\right]  =n\left(n-1\right)\left(1-p\right)^{2n-3}
  \le n^2e^{-2pn}+O(n^3p^2e^{-2pn}) = (ne^{-pn})^2+o(ne^{-pn}).
\]
It follows that
\[
\Var X_{p}  =\E\left[X_{p}\left(X_{p}-1\right)\right]+\E X_{p}-\left(\E X_{p}\right)^{2} =o\left(n^2e^{-2pn}\right)=o\left(\E X_p^2\right).
\]
If $p\le\left(\log n-\log\left(f+1\right)-h\right)/n$ then $\E X_p=\omega(1)$ so a.a.s.\ $X_{p}\ge\E X_{p}/2\ge f+2$ by Chebyshev's inequality. If $p\ge\left(\log n-\log\left(f+1\right)+h\right)/n$ then
$\E X_p=O((f+1)e^{-h})=o(f+1)$ so a.a.s.\ $X_p\le f+1$ by Markov's inequality. Using say \cite[Proposition~1.15]{JLR}, in $G^{\all}(t_h^-)$ there are a.a.s.\ at least $f+2$ isolated vertices, and in $G^{\all}(t_h^+)$ there are a.a.s.\ at most $f+1$ isolated vertices. The first of these facts immediately implies $t_h^-<\tau$. Recalling from \cref{lem:tau-inf} that a.a.s.\ every edge up to time $\tau$ is accepted, the second of these facts implies that a.a.s.\ $t_h^+\ge \tau$.
\end{proof}
Now, given $G\left(\tau\right)$, we will have $G\left(N\right)\cong\EGc$
if and only if we reject all further edges involving an isolated vertex
of $G\left(\tau\right)$. The proofs of the positive and negative
parts of \cref{thm:clique} now diverge.

\subsection{Matching-resilience: the positive part}

In this section we explain how to prove that if $f=o\left(\sqrt{n}/\log n\right)$
then after time $\tau$ we a.a.s.\ reject all edges involving isolated
vertices.

Say a vertex in a graph is \emph{dangerous }if it has a neighbour
of degree 1, or if it has a neighbour of degree 2 and that neighbour
is within distance 2 of another vertex of degree at most 2. Say a
graph with an odd number of vertices is \emph{matching-resilient}
if after deleting any non-dangerous vertex there is a perfect matching.
\mk{is it appropriate to call this resilience, or does that word
only refer to deleting edges?} Let $W$ be the set of isolated vertices
in $G\left(\tau\right)$. The following lemma is crucial.
\begin{lem}
\label{lem:resilient-matching}If $f=o\left(\sqrt{n}/\log n\right)$,
then a.a.s.\ $G\left(\tau\right)\setminus W$ is matching-resilient.
\end{lem}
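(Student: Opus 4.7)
My plan is to apply Tutte's theorem together with structural properties of $G(\tau)$ inherited from a nearby binomial random graph. Writing $H=G(\tau)\setminus W$, matching-resilience of $H$ amounts to the Tutte condition: for every non-dangerous $v\in V(H)$ and every $T\subseteq V(H)$ with $v\in T$, $o(H\setminus T)\le |T|$ (here $o(\cdot)$ counts odd components). Using \cref{lem:tau-inf,lem:clique-tau-range} together with the standard monotone coupling (e.g., \cite[Proposition~1.15]{JLR}), $G(\tau)$ can be sandwiched between binomial random graphs $\GG(n,p^\pm)$ with $p^\pm=(\log n-\log(f+1)\pm h)/n$ for some $h=h(n)\to\infty$ growing arbitrarily slowly, so it suffices to prove the Tutte condition a.a.s.\ in this binomial model.

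Next, via routine first-moment computations in $\GG(n,p)$ at density $p=\Theta(\log n/n)$, I would establish three structural facts: (a) the total number of vertices of degree at most $2$ is $O((f+1)(\log n)^{O(1)})$; (b) no two such low-degree vertices lie within bounded graph-distance of each other; (c) a standard expansion bound, say every set $U\subseteq V$ with $\log^2 n\le |U|\le n/2$ has at least $|U|\log n/10$ external neighbours. Fact (b) is where the hypothesis $f=o(\sqrt n/\log n)$ is crucial: the expected number of pairs of low-degree vertices at a bounded distance is of order $(f+1)^2(\log n)^{O(1)}/n$, which is $o(1)$ precisely in our regime (after tightening the degree/distance thresholds to absorb polylog factors where necessary).

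With these facts in hand, the Tutte condition is verified by contradiction. Suppose some non-dangerous $v$ and $T\ni v$ witness $o(H\setminus T)\ge |T|+1$. If $|T|$ is not small (say $|T|\ge \log^2 n$), (c) bounds the number of odd components whose external neighbourhoods lie entirely in $T$, yielding a contradiction via an edge-counting argument between $T$ and the rest of $H$. If $|T|$ is small, then most odd components of $H\setminus T$ must be singletons: each singleton vertex $u$ satisfies $N_H(u)\subseteq T$ and hence $\deg_H(u)\le |T|$, so is a low-degree vertex in the sense of (a). By (a)--(b), such singletons must form a highly localized cluster; accumulating $|T|+1$ of them around $v$ forces exactly one of the configurations the definition of \emph{dangerous} forbids, namely ``$v$ has a neighbour in $H$ of degree $1$'' (clause one), or ``a degree-$2$ neighbour of $v$ has its other neighbour of low degree nearby'' (clause two).

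The main obstacle is precisely this small-$|T|$ case: establishing that the two clauses of the dangerous definition \emph{exactly} capture the local singleton-rich configurations around $v$, and ruling out any alternative local obstruction. Property (b) localizes any cluster of low-degree vertices to a small neighbourhood, so the case analysis is in principle finite, but must be done carefully to handle short ``chains'' of low-degree vertices that pass through $v$ or through a degree-$2$ neighbour of $v$; keeping the polylog bookkeeping in check so that the hypothesis $f=o(\sqrt n/\log n)$ is enough (rather than the slightly stronger $f=o(\sqrt n/(\log n)^{O(1)})$ that the crudest estimates would demand) is the delicate point.
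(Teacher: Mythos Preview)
Your Tutte-based route is genuinely different from the paper's, which instead follows the constructive method of \L uczak and Ruci\'nski~\cite{LR}: it first greedily matches the low-degree (``bad'') vertices one at a time --- the two-clause definition of \emph{dangerous} having been engineered precisely so that each greedy step is guaranteed a valid partner --- and then completes to a perfect matching via a bipartite Hall-type argument on the remaining high-degree vertices (\cref{lem:no-cherries,lem:only-isolated,lem:PM-min-degree,lem:properties-A}). No Tutte-style component counting is used.

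Your proposal has a concrete gap: property~(b) is false over the full range $f=o(\sqrt{n}/\log n)$. In $\GG(n,p)$ with $np\approx\log n-\log(f+1)$, the expected number of \emph{adjacent} pairs of degree-$2$ vertices is of order
\[
n^{2}\cdot p\cdot\bigl(np\,e^{-np}\bigr)^{2}\;=\;n(np)^{3}e^{-2np}\;\asymp\;(\log n)^{3}(f+1)^{2}/n,
\]
and for pairs at distance $2$ the exponent of $\log n$ climbs to $4$. The hypothesis only secures $(f+1)^{2}(\log n)^{2}/n=o(1)$, so for (say) $f=\sqrt{n}/(\log n\cdot\log\log n)$ these expectations diverge and (b) fails; indeed the paper's own proof of \cref{lem:dangerous-bound} explicitly allows $O(\log^{3}n)$ such ``$2$-pairs'' to survive. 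You correctly flag this polylog bookkeeping as ``the delicate point'', but you do not resolve it, and without (b) your small-$|T|$ analysis collapses: one would need a case analysis that handles clusters of nearby degree-$2$ vertices through the second clause of the dangerous definition, and a separate argument ruling out non-singleton odd components (which your sketch does not address). The paper sidesteps all of this by relying only on the weaker structural inputs ``no cherries'' (\cref{lem:no-cherries}) and ``no \emph{three} small vertices close together'' (\cref{lem:properties-A}(3)), both of which do hold throughout the stated range of $f$, and by exploiting the dangerous definition directly inside the greedy construction rather than via a global Tutte deficiency argument.
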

The proof of \cref{lem:resilient-matching} is a bit involved, so we
defer it to \cref{subsec:resilient-matching}. Next, we also need a
bound on the number of dangerous vertices. The slightly cumbersome
definition of a dangerous vertex was carefully chosen so that the
following lemma would hold.
\begin{lem}
\label{lem:dangerous-bound}If $f=o\left(\sqrt{n}/\log n\right)$,
then a.a.s.\ $G\left(\tau\right)$ has $o\left(\sqrt{n}\right)$ dangerous
vertices.
\end{lem}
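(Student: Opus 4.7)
The plan is to approximate $G(\tau)$ using \cref{lem:tau-inf} and \cref{lem:clique-tau-range}, then bound the expected number of dangerous vertices by a first-moment calculation. Pick $h=h(n)\to\infty$ growing sufficiently slowly (to be determined below); by \cref{lem:tau-inf,lem:clique-tau-range} we have a.a.s.\ $G(\tau)=G^{\all}(\tau)$ with $t_h^-\le\tau\le t_h^+$, so under the natural coupling of the process with a uniformly random edge ordering, a.a.s.\ $G^{\all}(t_h^-)\subseteq G(\tau)\subseteq G^{\all}(t_h^+)$. Partition dangerous vertices into the two types corresponding to the two clauses of the definition, and handle each separately.

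For \emph{type (a)} (vertices having a neighbour $w$ with $\deg_{G(\tau)}(w)=1$), monotonicity of degree under edge removal gives $\deg_{G^{\all}(t_h^-)}(w)\le 1$, so the type-(a) count is at most the number of vertices of degree at most $1$ in $G^{\all}(t_h^-)\sim\GG(n,t_h^-)$. This is a monotone property, so its asymptotic behaviour transfers (via Proposition~1.15 of \cite{JLR}) to the binomial $\GG(n,p_h^-)$ with $np_h^-=\log n-\log(f+1)-h$, where a standard calculation gives expectation $(1+o(1))n e^{-np_h^-}(1+np_h^-)\sim(f+1)e^h\log n$. Since $\alpha(n):=\sqrt n/((f+1)\log n)\to\infty$ by hypothesis, we may pick $h\to\infty$ with $e^h=o(\alpha)$, so this expectation is $o(\sqrt n)$, and Markov's inequality gives a.a.s.\ $o(\sqrt n)$ type-(a) dangerous vertices.

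For \emph{type (b)} (vertices having a neighbour $w$ with $\deg_{G(\tau)}(w)=2$ within distance $2$ of some other $u$ with $\deg_{G(\tau)}(u)\le 2$), we need a two-scale envelope: $w$ and $u$ have degree at most $2$ in $G^{\all}(t_h^-)$ (anti-monotone direction), while $d_{G^{\all}(t_h^+)}(w,u)\le 2$ (monotone direction). Each such witness $w$ contributes at most $\deg_{G(\tau)}(w)=2$ dangerous vertices, so it suffices to bound the expected number of pairs $(w,u)$ meeting these conditions. Working in the standard binomial coupling $G_0\subseteq G_0\cup G'$ with $G_0\sim\GG(n,p_h^-)$ and $G'\sim\GG(n,q)$ independent and $nq\approx 2h$ (so $G_0\cup G'\sim\GG(n,p_h^+)$), split the expectation by whether $d(w,u)=1$ (a direct edge) or $=2$ (via a common neighbour), and by whether each connecting edge lies in $G_0$ or in $G'$. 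Each resulting sub-case is a product of near-independent binomial factors, and summing over the $O(n^3)$ relevant configurations yields expected pair count $O((\log n)^{C}(f+1)^2 e^{Ch}/n)$ for some absolute constant $C$. Since $(f+1)^2=o(n/(\log n)^2)$, this is $o((\log n)^{C-2}e^{Ch})=o(\sqrt n)$ provided $h\to\infty$ grows slowly enough, which is compatible with the type-(a) constraint. Markov then gives a.a.s.\ $o(\sqrt n)$ type-(b) dangerous vertices, and adding the two bounds completes the proof.

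The main technical obstacle is the type-(b) estimate: because degrees and distances behave oppositely under edge additions, no single-scale envelope works, forcing the two-scale analysis and enumeration of sub-cases described above. Because the hypothesis $f=o(\sqrt n/\log n)$ leaves substantial slack between $(f+1)\log n$ and $\sqrt n$, the polylogarithmic and $e^{Ch}$ factors in the final bound are absorbed by choosing $h$ to grow slowly enough.
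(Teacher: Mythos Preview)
Your proposal is correct, and for the type-(a) dangerous vertices it matches the paper's argument essentially verbatim. For type~(b), however, you take a genuinely different route. The paper bounds the number of ``2-pairs'' (pairs of 2-vertices at distance at most~2) \emph{dynamically}: it first shows there are $o(\log^2 n)$ 2-pairs in a binomial random graph at the lower time scale, and then controls how many new 2-pairs can appear as $t$ runs from roughly $t_h^-$ to $t_{h/2}^+$, by bounding the number of edges that fall between 2-vertices and their neighbours. This step requires invoking \cref{lem:properties-A} (no three 2-vertices close together) to argue that at most one new 2-pair is created per step. Your approach is instead \emph{static}: you sandwich the degree constraint by the lower graph $G_0$ and the distance constraint by the upper graph $G_0\cup G'$, decouple the two via the independent increment $G'$, and run a single first-moment computation over all configurations. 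This is more self-contained (no appeal to \cref{lem:properties-A}) and arguably cleaner, at the cost of a slightly looser bound---polylogarithmic in place of the paper's $O(\log^3 n)$---and a more delicate coupling between the $\GG(n,m)$ and $\GG(n,p)$ models at two time scales simultaneously (you should make explicit that the i.i.d.\ uniform-label coupling gives, a.a.s., $\GG(n,p_1)\subseteq G^{\all}(t_h^-)$ and $G^{\all}(t_h^+)\subseteq\GG(n,p_2)$ for suitable $p_1,p_2$, with the increment $\GG(n,p_2)\setminus\GG(n,p_1)$ independent of $\GG(n,p_1)$). Both approaches are well within the slack provided by the hypothesis $f=o(\sqrt n/\log n)$.
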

\begin{proof}
First we introduce some convenient terminology. By ``1-vertex''
we mean a vertex with degree at most 1, by ``2-vertex'' we mean
a vertex with degree at most 2, and by ``2-pair'' we mean a pair
of 2-vertices whose distance is at most 2. Every dangerous vertex
is adjacent to a 1-vertex or adjacent to one of the vertices of a
2-pair, so it suffices to show that in $G\left(\tau\right)$ there
are $o\left(\sqrt{n}\right)$ 1-vertices and $o\left(\sqrt{n}\right)$
2-pairs.

Now, let $g=\sqrt{n}/\left(f\log n\right)\to\infty$ and choose $h\to\infty$
with $h=o\left(\log g\right)$. We consider the binomial random graph
$G\in\GG\left(n,p\right)$, with 
\[
p=\frac{\log n-\log\left(f+1\right)-h}{n}=\frac{\log n/2+\log\log n+\left(1+o\left(1\right)\right)\log g}{n}.
\]
The expected number of 1-vertices in $G$ is
\[
n\left(1-p\right)^{n-1}+n\left(n-1\right)p\left(1-p\right)^{n-2}=o\left(\sqrt{n}\right),
\]
and the expected number of 2-vertices is
\[
o\left(\sqrt{n}\right)+O\left(n^{3}p^{2}\left(1-p\right)^{n}\right)=o\left(\sqrt{n}\log n\right).
\]
Considering all possible cases for the structure of a 2-pair, \mk{would
a picture be helpful?} the expected number of 2-pairs is
\[
O\left(n^{2}\left(1-p\right)^{2n}\left(p+np^{2}+n^{2}p^{3}+n^{3}p^{4}\right)\right)=o\left(\log^{2}n\right).
\]
Let $m$ be
the number of edges of $G$; conditioned on $m$, $G$ has the Erd\H os-R\'enyi distribution $\GG(n,m)$, so we can couple $G$ and $\left(G^{\all}\left(t\right)\right)_{t}$
in such a way that $G^{\all}\left(m\right)=G$. By the Chernoff bound,
a.a.s.
\begin{align*}
\left|m-\left(\log n-\log\left(f+1\right)-h\right)\frac{n}{2}\right| & =o\left(n\right),
\end{align*}
meaning that $m<t^-_{h/2}$. So condition on an outcome of $G=G^{\all}\left(m\right)$ satisfying
this property, such that $G$ has at most $\sqrt{n}$ 1-vertices,
at most $\sqrt{n}\log n$ 2-vertices, and at most $\log^{2}n$ 2-pairs.
Let $U\left(t\right)$ be the set of 2-vertices and their neighbours
in $G^{\all}\left(t\right)$. Note that $G^{\all}\left(t\right)$
has at most $\sqrt{n}$ 1-vertices, and $\left|U\left(t\right)\right|\le3\sqrt{n}\log n$,
for all $t\ge m$. Now, after time $m$, the only way a new 2-pair
can be formed is if we are offered an edge $e\left(t\right)$ between
two vertices in $U\left(t-1\right)$. Note that $t_{h/2}^{+}-m=O\left(hn\right)$,
so the expected number of such edges we will be offered before time
$t_{h/2}^{+}$ is
\[
O\left(\frac{\left(\sqrt{n}\log n\right)^{2}hn}{n^{2}}\right)=O\left(h\log^{2}n\right)=o\left(\log^{3}n\right).
\]
So, a.a.s.\ there are at most $\log^{3}n$ times $m<t\le t_{h/2}^{-}$
in which new 2-pairs are created. Now, we will soon see in \cref{lem:properties-A}
that a.a.s.\ in each of $G^{\all}\left(t_{h/2}^-\right),G^{\all}\left(t_{h/2}^-+1\right),\dots,G^{\all}\left(t_{h/2}^{+}\right)$
there are no three 2-vertices within distance 20 of each other, which
means that only one new 2-pair can be created at a time. It follows
that a.a.s.\ each of $G^{\all}\left(t_{h/2}^-\right),G^{\all}\left(t_{h/2}^-+1\right),\dots,G^{\all}\left(t_{h/2}^{+}\right)$
have $o\left(\sqrt{n}\right)$ 1-vertices and $O\left(\log^{3}n\right)$
2-pairs. By \cref{lem:clique-tau-range}, a.a.s.\ $t_{h/2}^-\le\tau\le t_{h/2}^{+}$,
implying that a.a.s.\ $G\left(\tau\right)$ has $o\left(\sqrt{n}\right)$
dangerous vertices, as desired.
\end{proof}
In view of the above lemmas, condition on an outcome of $\tau,e\left(1\right),\dots,e\left(\tau\right)$
such that $\tau\le n\log n$, and $G\left(\tau\right)$ is matching-resilient,
and $G\left(\tau\right)$ has $o\left(\sqrt{n}\right)$ dangerous
vertices. Let $W$ and $U$ be the sets of isolated and dangerous
vertices of $G\left(\tau\right)$, respectively, let $V=[n]$ be the set of all vertices, and let $r=2n\log n$.
The probability we are offered an edge between $W$ and $U$ before
time $\tau+r$ is
\[
o\left(\frac{r(f+1)\sqrt{n}}{n^{2}}\right)=o\left(1\right).
\]
It follows that a.a.s.\ $G\left(\tau+r\right)$
still has $f+1$ isolated vertices. Indeed, any edge $e(t)$ between $W$ and $V\setminus U$ is rejected, beacause by matching-resilience $G(\tau)$ has a $(k-1)$-matching not involving the vertices of $e(t)$. Next, the expected number of edges
we are offered before time $\tau+r$ that involve $W$ at all is $O\left(rf/n\right)=o\left(r\right)$.
So, by Markov's inequality, a.a.s.\ we are offered fewer than $r/2$
such edges, meaning that we are offered at least $n\log n$ edges within $V\setminus W$.
As long as each vertex in $W$ remains isolated, every edge within
$V\setminus W$ will be accepted.

Let $G$ be the Erd\H os-R\'enyi random graph on the vertex set
$V\setminus W$ with $n\log n$ random edges. By the above considerations,
$G\left(\tau+r\right)\setminus W$ can be coupled with $G$ in such
a way that a.a.s.\ $G\left(\tau+r\right)\setminus W\supseteq G$.
It is well known that $G$ a.a.s.\ has a Hamilton cycle (see for example
\cite[Section~5.1]{JLR}), which means that it has a perfect matching
after deleting any vertex. The same is a.a.s.\ true for $G\left(\tau+r\right)\setminus W$,
meaning that no edge involving an isolated vertex can ever be accepted after this point.

\subsection{Cherries: the negative part}

In this subsection we will prove that if $f=\omega\left(\sqrt{n}/\log n\right)$
and $f=o(n)$ then a.a.s.\ we accept some
edge involving an isolated vertex, after time $\tau$. This suffices, because the case where $f=\Omega(n)$ is handled by \cref{thm:not-extremal}.

We say a path of length 2 in a
graph is a \emph{cherry} if its two endpoints have degree 1 in the
graph. A matching can use at most two of the three vertices of a cherry,
so if $G\left(\tau\right)$ has a cherry then there is some freedom
to add an edge involving an isolated vertex, without creating a $k$-matching.
\begin{lem}
\label{lem:cherries-persist}Suppose $f=\omega\left(\sqrt{n}/\log n\right)$.
Let $g=f\log n/\sqrt{n}\to\infty$, and choose $h\to\infty$ to satisfy
$h=o\left(\log g\right)$. Then a.a.s.\ $G\left(t_{h}^{+}\right)$ has a cherry.
\end{lem}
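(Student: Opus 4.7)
The plan is to find a cherry $u$-$v$-$w$ in $G^{\all}(t_{h}^{+})$ whose two edges $uv,vw$ are both offered among the first $t_{h}^{-}$ steps of the process. Such an ``old'' cherry is automatically a cherry in $G(t_{h}^{+})$: by \cref{lem:clique-tau-range,lem:tau-inf}, a.a.s.\ every edge in $e(1),\dots,e(t_{h}^{-})$ is accepted, so $uv,vw\in G(t_{h}^{-})\subseteq G(t_{h}^{+})$; and since $u,w$ have degree $1$ in $G^{\all}(t_{h}^{+})\supseteq G(t_{h}^{+})$ they have degree exactly $1$ in $G(t_{h}^{+})$ with common neighbour $v$. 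This reduction is the key conceptual step: it lets us avoid analysing the post-$\tau$ rejection behaviour of the process entirely, and reduces the problem to a calculation in the ambient Erd\H{o}s--R\'enyi graph.

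Let $X$ be the number of such triples (with $v$ as the distinguished middle vertex). Writing $p^{\pm}=t_{h}^{\pm}/\binom{n}{2}$, a direct hypergeometric calculation (or passage to the binomial model via \cite[Proposition~1.15]{JLR}) gives
\[
\E X \;=\; (1+o(1))\binom{n}{2}(n-2)(p^{-})^{2}(1-p^{+})^{2n-5}.
\]
Using $np^{-}=(1+o(1))\log(n/(f+1))$ and $e^{-np^{+}}=(1+o(1))(f+1)e^{-h}/n$, this simplifies to
\[
\E X \;=\; \Theta\!\left(\tfrac{(\log(n/(f+1)))^{2}(f+1)^{2}e^{-2h}}{n}\right) \;=\; \Omega(g^{2}e^{-2h}),
\]
and since $h=o(\log g)$ we have $e^{-2h}=g^{-o(1)}$, so $\E X\to\infty$.

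To conclude via Chebyshev, I would bound $\E[X(X-1)]$ by summing joint probabilities over ordered pairs of distinct triples. Pairs sharing no vertex contribute $(1+o(1))(\E X)^{2}$, since the required arrival-time and degree-$1$ constraints involve disjoint edges and factorise up to negligible hypergeometric corrections. Pairs sharing at least one vertex contribute $O((\E X)^{2}/n)$ via a routine case analysis, as any overlap imposes extra edge constraints and cuts the number of admissible configurations by a factor of order $n$. Hence $\Var X=o((\E X)^{2})$, and Chebyshev gives $X\ge 1$ a.a.s. The main obstacle is really just organising this second-moment bookkeeping cleanly (particularly in the degenerate regime where $\log(f+1)$ is close to $\log n$), but it follows the standard template for subgraph counts in sparse random graphs and no new ideas are required beyond the reduction in the first paragraph.
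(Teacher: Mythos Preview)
Your reduction in the first paragraph is correct and is exactly the idea the paper uses: find a cherry whose two edges arrive before $t_h^-$ (hence are accepted, by \cref{lem:tau-inf,lem:clique-tau-range}) and whose leaves stay degree-$1$ in $G^{\all}(t_h^+)\supseteq G(t_h^+)$. The paper then runs a two-phase argument (a second moment for cherries in $G_1\in\GG(n,p_1)$, passage to a leaf-disjoint subfamily, then a second moment for survivors in $G_1\cup G_2$), whereas you propose a single direct second moment on $X$. That route works in principle, but several of the bounds you state are wrong.

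Write $L=\log(n/(f+1))$. First, $np^-=L-h$, not $(1+o(1))L$: the hypothesis $h=o(\log g)$ does not force $h=o(L)$ (for $f=n/(\log n)^2$ one has $L=2\log\log n$ while $\log g\sim\tfrac12\log n$). Second, even granting $np^-\sim L$, the step $\E X=\Omega(g^{2}e^{-2h})$ requires $L=\Omega(\log n)$, which again fails for large $f$; the correct expression $\E X=\Theta\bigl((L-h)^2(f+1)^2e^{-2h}/n\bigr)$ still tends to infinity, but the paper needs a case split $g\gtrless\log^2 n$ to see this. Third, and most substantively, the claim that overlapping pairs contribute $O((\E X)^2/n)$ is false: two cherries sharing the centre and one leaf (union a $3$-edge star) contribute $\Theta\bigl(n^4(p^-)^3(1-p^+)^{3n}\bigr)$, whose ratio to $(\E X)^2$ is $\Theta\bigl(1/((L-h)(f+1)e^{-h})\bigr)$ --- this is $o(1)$, but for $f=\sqrt n$ it is only about $1/(\sqrt n\log n)$, not $O(1/n)$. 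Your heuristic ``overlap cuts the count by a factor $n$'' overlooks that this configuration also saves one edge-present and roughly $n$ edge-absent constraints, boosting the per-pair probability by $\Theta(e^{np^+}/p^-)$. The paper's two-phase decomposition avoids exactly this bookkeeping by first extracting a leaf-disjoint family of cherries before running the survival second moment.
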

\begin{proof}
We use a two-phase argument similar to the proof of \cref{lem:isolated-triangle}.
Consider $G_1\in\GG\left(n,p_1\right)$ and $G_2\in\GG\left(n,p_2\right)$,
with 
\[
p_1=\frac{\log n-\log\left(f+1\right)-h}{n}=\frac{\log n/2+\log\log n-\left(1+o\left(1\right)\right)\log g}{n},
\]
 and $p_2\le 5h/n$ chosen such that $p_2(1-p_1)=4h/n$. Then by the Chernoff bound, a.a.s.
\begin{align*}
\left|e\left(G_1\right)-\left(\log n-\log\left(f+1\right)-h\right)\frac{n}{2}\right| & =o\left(n\right),\\
\left|e\left(G_2\setminus G_1\right)-2hn\right| & =o\left(n\right),
\end{align*}
implying that $e(G_1)<t_{h}^{-}$ and $e(G_1\cup G_2)>t_{h}^{+}$. Conditioned on $e(G_1)$ (respectively $e(G_1\cup G_2)$), note that $G_1$ (respectively $G_1\cup G_2$) has an Erd\H os-R\'enyi random graph distribution, so we can couple $G_1$, $G_2$ and $\left(G\left(t\right)\right)_{t}$
such that a.a.s.\ $G_1\subseteq G^\all\left(t_{h}^{-}\right)=G\left(t_{h}^{-}\right)$ and $G_1\cup G_2\supseteq G^\all\left(t_{h}^{+}\right)\supseteq G\left(t_{h}^{+}\right)$.
We will prove that a.a.s.\ $G_1$ has many cherries, and a.a.s.\ at least one of these
cherries remains in $G_2\cup G_1$.

Let $X$ be the number of cherries in $G_1$. Then
\begin{align*}
\E X & =n{n-1 \choose 2}p_1^{2}\left(1-p_1\right)^{2n-5}=\Omega\left(n^2 p_1^2 g^2/\log^2 n\right).
\end{align*}
Considering separately the cases $g\ge \log^2 n$ and $g< \log^2 n$, noting that $np_1\to \infty$, it follows that $\E X=\omega(g)$. Considering all the possible ways a pair of distinct cherries can intersect, we can compute
\begin{align*}
\E[X(X-1)]&=\left(n\binom{n-1}2 (n-3)\binom{n-4}2\right)p_1^4(1-p_1)^{4n-14}+O(n^5p_1^4(1-p_1)^{4n}+n^4p_1^3(1-p_1)^{3n})\\
&=(1-o(1))(\E X)^2+O(np_1e^{-np_1}\E X)=(1-o(1))(\E X)^2,\\
\Var X&=o\left(\left(\E X\right)^{2}\right).
\end{align*}
Therefore a.a.s.\ $G_1$
has at least $\E X/2>2g$ cherries. We say a pair of cherries is ``externally intersecting''
if they intersect in their degree-1 vertices (the only way this can occur is if the union of the two is a 3-edge star). Let $Z$ be the number
of pairs of externally intersecting cherries; then we can compute
\begin{align*}
\E Z & =O\left(n^{4}p_1^{3}\left(1-p_1\right)^{3n}\right)=O(np_1e^{-np_1}\E X)=o\left(\E X\right).
\end{align*}
So, a.a.s.\ there is a collection of $g$ cherries in $G_1$ which are pairwise externally disjoint.
Condition on such an outcome of $G_1$. Let $Y$ be the number of these cherries in this collection that remain in
$G_1\cup G_2$. Then, we can compute
\begin{align*}
\E Y&=g\left(1-p_2\right)^{2n-5}=\Omega\left(ge^{-5h}\right)=g^{1-o\left(1\right)}=\omega(1),\\
\E[Y(Y-1)]&=g(g-1)(1-p_2)^{4n-18}=(1-o(1))(\E Y)^2,\\
\Var(Y)&=o\left((\E X)^2\right).
\end{align*}
So, a.a.s.\ $Y>0$.
\end{proof}
Now, let $P_{G(\tau)}$ be the set of pairs consisting of an isolated vertex of $G(\tau)$ and a vertex of degree at least 2 in $G(\tau)$. Let $h$ be as in \cref{lem:cherries-persist}. We claim that in order to prove that a.a.s.\ some edge involving an isolated vertex of $G(\tau)$ is accepted, it suffices to show that a.a.s.\ there is a time $t<t_h^+$ for which an element of $P_{G(\tau)}$ is offered as $e(t)$. Indeed, let $e(t)$ be such an element, and suppose that $G(t_h^+)$ has $f+1$ isolated vertices (otherwise we are done). By \cref{lem:cherries-persist}, a.a.s. $G(t_h^+)+e(t)$ has a cherry, and at most two of the three vertices of this cherry can be used in a matching. Since $G(t_h^+)+e(t)$ has $f=n-2k$ isolated vertices, it is $k$-matching-free, so $e(t)$ is accepted at time $t$.

In view of the above discussion, condition on $\tau\le t^+_{h/2}$ such that no edge of $P_{G(\tau)}$ has been offered yet. The probability we are offered an edge of $P_{G(\tau)}$ before time $t^+_h$ is at least
\[
1-\left(1-\frac{\left|P_{G(\tau)}\right|}{\binom{n}{2}}\right)^{hn/2}\ge 1-e^{-\Omega\left(\left|P_{G(\tau)}\right|h/n\right)},
\]
so it actually suffices to show that a.a.s.\ $\left|P_{G(\tau)}\right|=\Omega(n)$. But this is an immediate consequence of the fact that a.a.s.\ $G\left(\tau\right)$ has
$\Omega\left(n\right)$ vertices of degree at least 2. This can be
proved by applying the second moment method to the number of isolated vertices and to the number of degree-1 vertices in $G\left(t_{h}^{-}\right)$, for sufficiently slowly-growing $h$ such that $t_h^-=\omega(n)$.

\subsection{Proof of \texorpdfstring{\cref{lem:resilient-matching}}{Lemma~\ref{lem:resilient-matching}}\label{subsec:resilient-matching}}

Let $g=\sqrt{n}/\left(f\log n\right)\to\infty$ and choose $h\to\infty$
to satisfy $h=o\left(\log g\right)$. Let $W\left(t\right)$
be the set of isolated vertices in $G^{\all}\left(t\right)$. We will
prove that a.a.s.\ for each $t_{h}^{-}\le t\le t_{h}^{+}$, if $n-\left|W\left(t\right)\right|$
is odd then $G^{\all}\left(t\right)\setminus W\left(t\right)$ is
matching-resilient.

To accomplish this, we adapt the method of \L uczak and Ruci\'nski
\cite{LR} used to study tree-packings. For the special case of matchings,
this method was outlined in \cite[Section~4.1]{JLR}. Where possible,
we will re-use lemmas from \cite{LR} and \cite{JLR}.

\mk{Perhaps I should explain \cite{LR} more? Eg. in the case of
matchings (tree-packing where the tree is an edge), say what the values
of certain parameters are, so that the reader can more easily interpret
the lemmas in \cite{LR}?}

First, the following lemma follows directly from parts (i) and (ii)
of \cite[Theorem~3]{LR}.
\begin{lem}
\label{lem:no-cherries}For any $h\to\infty$, a.a.s.\ for each $t\ge\left(\log n/2+\log\log n+h\right)n/2$,
the largest component of $G^{\all}\left(t\right)$ has no cherries.
\end{lem}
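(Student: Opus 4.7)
My plan is to first reduce the ``for each $t$'' statement to a first-moment calculation on a binomial random graph, and then separately handle the ``for all $t$'' quantifier using the observation that any given cherry-triple can only be active during one interval of the process.

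For the single-time version, I would fix $t\ge (\log n/2+\log\log n+h)n/2$ and transfer via \cite[Proposition~1.15]{JLR} to the binomial model $\GG(n,p)$ with $np\ge \log n/2+\log\log n+h$. The expected number of cherries in $\GG(n,p)$ is
\[
\binom{n}{2}(n-2)\,p^{2}(1-p)^{2n-5}\;=\;(1+o(1))\,\frac{n^{3}p^{2}}{2}\,e^{-2np}\;=\;O\!\left(e^{-2h}\right)\;=\;o(1),
\]
so by Markov's inequality a.a.s.\ $G^{\all}(t)$ has no cherries at all (in particular, none in the largest component). The same estimate holds for every larger $t$, since $p^{2}e^{-2np}$ is decreasing in the relevant regime.

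To upgrade to all $t\ge t_{h}^{+}$ simultaneously, I would use the structural observation that each triple $(u,v,w)$ can form a cherry at most once during the process: once $\deg(u)\ge 2$ or $\deg(w)\ge 2$ the cherry is permanently destroyed. So the union bound
\[
\Pr\!\left(\exists\,t\ge t_{h}^{+}:G^{\all}(t)\text{ has a cherry}\right)\;\le\;\sum_{(u,v,w)}\Pr\!\left(\exists\,t\ge t_{h}^{+}:(u,v,w)\text{ is a cherry at time }t\right)
\]
is not too wasteful. A short case analysis on the state of the pair $(u,w)$ at time $t_{h}^{+}$ (both isolated; one isolated and the other adjacent to $v$ only; or the cherry already present) shows that each summand is $O((1+np_{0})^{2}e^{-2np_{0}}/n^{2})$, where $p_{0}=2t_{h}^{+}/(n(n-1))$; summing over $\Theta(n^{3})$ triples yields $O(n(np_{0})^{2}e^{-2np_{0}})=O(e^{-2h})=o(1)$, and a final application of Markov's inequality completes the argument.

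Alternatively, as indicated in the text, one may simply invoke \cite[Theorem~3]{LR} parts (i) and (ii), specialised to the trivial tree consisting of a single edge. Either way, the main obstacle is the ``for all $t$'' quantifier: a naive union bound over the $\Theta(n^{2})$ time steps would be far too lossy, and it is the monotone-activation observation above (or equivalently the trajectory-wide tree-packing machinery of \cite{LR}) that bridges the gap between a single-time Markov bound and the statement for the entire interval.
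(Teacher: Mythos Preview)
Your argument is correct, and it is genuinely different from what the paper does: the paper gives no proof at all beyond the one-line citation of \cite[Theorem~3]{LR}, whereas you supply a self-contained first-moment computation. The key step that makes your approach work is exactly the one you highlight---the observation that the set of times at which a fixed triple $(u,v,w)$ is a cherry is a (possibly empty) interval of the process, so ``some cherry exists at some $t\ge t_0$'' can be union-bounded over the $\Theta(n^3)$ triples rather than over the $\Theta(n^2)$ time steps. Your case analysis is right: conditioning on the state of $\{u,w\}$ at time $t_0$ and then asking that the (at most two) missing edges among $uv,vw$ arrive before all $2n-5$ ``bad'' edges gives per-triple probability $O\bigl((1+np_0)^2e^{-2np_0}/n^2\bigr)$, and the sum is $O(e^{-2h})$. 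One cosmetic point: you use the symbol $t_h^+$ for the threshold $(\log n/2+\log\log n+h)n/2$ appearing in the lemma, but in \cref{sec:clique} the paper reserves $t_h^+$ for a different quantity; it would be cleaner to introduce fresh notation. The advantage of your route is that it is elementary and fully self-contained; the advantage of the paper's route is brevity and the fact that the \L uczak--Ruci\'nski machinery is being invoked anyway for \cref{lem:only-isolated,lem:properties-A}, so the marginal cost of the citation is zero.
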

The next lemma follows directly from \cite[Lemma~1]{LR}.
\begin{lem}
\label{lem:only-isolated}For any $h\to\infty$, a.a.s.\ for each $t\ge\left(\log n/2+\log\log n/2+h\right)n/2$,
in $G^{\all}\left(t\right)$ there are only isolated vertices outside
the largest component.
\end{lem}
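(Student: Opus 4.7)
The plan is to establish the lemma by combining two facts: (i) at the specific time $t_0 = \lceil(\log n/2 + \log\log n/2 + h)n/2\rceil$, $G^\all(t_0)$ a.a.s.\ consists of a single giant component (of order $n-o(n)$) together with isolated vertices, and (ii) a.a.s.\ no edge $e(t)$ with $t > t_0$ is ever inserted between two vertices that are both isolated in $G^\all(t-1)$. Together these imply the lemma, because starting from a graph of the form ``giant $+$ isolated vertices'', the only way a single edge addition can create a new nontrivial non-giant component is the bad event in (ii); any other edge either lies within the giant or absorbs an isolated vertex into it. Hence, provided (i) and (ii) both hold, the ``giant $+$ isolated'' structure is preserved at every time $t \ge t_0$.

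For (i), I would work in the binomial random graph $\GG(n,p_0)$ with $p_0 = 2t_0/(n(n-1))$, transferring to the edge-count model via \cite[Proposition~1.15]{JLR}. At this density $np_0 = (\log n + \log\log n + 2h)/2 \to \infty$, so standard giant-component results give that the largest component has order $n-o(n)$; what remains is to bound the expected number $Y_k$ of components of order $k$ for $2 \le k \le \lfloor n/2\rfloor$. Using the tree bound $\E[Y_k^{\mathrm{tree}}] \le \binom{n}{k} k^{k-2} p_0^{k-1}(1-p_0)^{k(n-k)}$ together with a cruder bound for components containing a cycle, one checks that $\E Y_2 = O(e^{-2h}) = o(1)$ and that the ratio $\E Y_{k+1}/\E Y_k$ is of order $(np_0)e^{-np_0(1-2k/n)}$, which is uniformly $o(1)$ for $k$ bounded away from $n/2$; in the remaining regime $k \asymp n$ the isolation factor $(1-p_0)^{k(n-k)}$ makes $\E Y_k$ doubly-exponentially small. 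Thus $\sum_{k=2}^{\lfloor n/2\rfloor}\E Y_k = o(1)$, and (i) follows from Markov's inequality.

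For (ii), I would exploit the uniform random ordering directly rather than tracking the evolving graph. Fix a pair $\{u,v\}$ and consider the set $I_{uv}$ of $2n-3$ edges of $K_n$ incident to $u$ or $v$. The event ``$e(t)=\{u,v\}$ and $u,v$ are both isolated in $G^\all(t-1)$ for some $t>t_0$'' occurs if and only if $\{u,v\}$ is the first of the edges in $I_{uv}$ to appear in the ordering $e(1),\dots,e(N)$, and moreover it appears at a position exceeding $t_0$. Since the positions of $I_{uv}$ form a uniformly random $(2n-3)$-subset of $[N]$, this probability is exactly
\[
\frac{1}{2n-3}\cdot\frac{\binom{N-t_0}{2n-3}}{\binom{N}{2n-3}} \;\le\; \frac{e^{-(2n-3)t_0/N}}{2n-3} \;=\; O\!\left(\frac{1}{n^2\log n\cdot e^{2h}}\right),
\]
where I have used $(2n-3)t_0/N = (1+o(1))(\log n + \log\log n + 2h)$. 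Summing over all $\binom{n}{2}$ pairs gives an expected number of bad events of order $1/(\log n\cdot e^{2h}) = o(1)$ as $h\to\infty$, so by Markov a.a.s.\ no bad event occurs.

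The principal technical obstacle is the uniform component-counting in (i): verifying $\sum_{k}\E Y_k = o(1)$ across the full range $k\in[2,n/2]$ requires juggling two distinct regimes, a small-$k$ regime where the geometric decay of $\E Y_{k+1}/\E Y_k$ dominates, and a moderate regime $k\asymp n$ where the ``isolation'' factor $(1-p_0)^{k(n-k)}$ takes over. By contrast, the bad-event count (ii) and the deterministic combination of (i) and (ii) into the conclusion are essentially immediate once framed through the random ordering.
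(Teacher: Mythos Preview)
Your argument is correct, and the decomposition into (i) ``giant $+$ isolated at the starting time $t_0$'' and (ii) ``no edge between two currently-isolated vertices ever after $t_0$'' is a clean way to get the statement for all $t\ge t_0$ simultaneously. The computation in (ii) is exact and elegant: rephrasing the bad event for a pair $\{u,v\}$ as ``$\{u,v\}$ is the earliest among the $2n-3$ incident edges and its position exceeds $t_0$'' lets you avoid any union bound over $t$, and your estimate $\sum_{\{u,v\}}\Pr(\text{bad}) = O\!\left((\log n)^{-1}e^{-2h}\right)$ is right. Part (i) is the standard structural threshold (non-giant trees of order $\ge 2$ die out precisely at $np=\tfrac12(\log n+\log\log n)+\omega(1)$, and unicyclic/complex small components are negligible), and your sketch of the component-counting is adequate; the only place to be careful is the uniform control of $\E Y_k$ across the two regimes, which you have correctly flagged.

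The paper, by contrast, does not prove this lemma at all: it simply cites \cite[Lemma~1]{LR}, where the full statement (for the whole range of $t$) is established directly. So your approach is genuinely different in that it is self-contained, and your step (ii) is arguably more transparent than a process-level argument: it isolates exactly the combinatorial obstruction to the ``giant $+$ isolated'' structure persisting, and dispatches it with a single first-moment calculation in the uniform ordering model. What the citation buys is brevity and the avoidance of the component-counting in (i); what your route buys is that the reader need not consult \cite{LR}.
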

The next lemma is a slight adaptation of \cite[Lemma~4.7]{JLR} (essentially the only difference is that we need a certain property to hold for a range of $G^{\all}(t)$ instead of a single random graph).
\begin{lem}
\label{lem:PM-min-degree}For any $c>0$, a.a.s.\ for all $n\log n/4\le t\le n\log n$,
every bipartite subgraph with minimum degree at least $c\log n$, induced in $G^{\all}\left(t\right)$ by two sets of equal size,
contains a perfect matching.
\end{lem}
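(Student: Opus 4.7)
The plan is to follow the Hall-based approach of \cite[Lemma 4.7]{JLR}, with care taken to handle the range of $t$ uniformly via monotonicity. Suppose, for contradiction, that at some $t\in[n\log n/4,n\log n]$ and for some equal-sized disjoint $X,Y\subseteq[n]$, the induced bipartite subgraph $B_{t}$ of $G^{\all}(t)$ has minimum degree $\ge c\log n$ but no perfect matching. Hall's theorem yields a minimum Hall violator $S\subseteq X$ with $T:=N_{B_{t}}(S)$ satisfying $|T|=|S|-1$. Setting $V=Y\setminus T$, and using the symmetry $X\leftrightarrow Y$ to assume $|S|\le|X|/2$, we obtain $|S|,|V|\ge c\log n+1$, $|V|\ge|S|$, and $|S|+|V|\le n/2+1$. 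The minimum-degree condition on $B_{t}$, together with the fact that $B_{t}$ is induced, forces two structural properties in $G^{\all}(t)$: (a) there are no edges between $S$ and $V$, and (b) there are at least $|S|c\log n$ edges between $S$ and $T$.

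To handle the range of $t$, I would use monotonicity: condition (a) is monotone decreasing in $t$, so it holds at $t^{-}:=\floor{n\log n/4}$ whenever it holds at any $t$ in the range; likewise (b) is monotone increasing in $t$, so it holds at $t^{+}:=\ceil{n\log n}$. Thus the bad event is contained in the existence of a triple $(S,T,V)$ satisfying (a) in $G^{\all}(t^{-})$ and (b) in $G^{\all}(t^{+})$. Since these events concern disjoint pair-sets ($S\times V$ and $S\times T$), they can be treated as essentially independent, after the standard transfer between $\GG(n,m)$ and $\GG(n,p)$ via \cite[Proposition~1.15]{JLR}. A union bound over triples of sizes $(s,s-1,v)$ then produces terms of order $\binom{n}{s}^{2}\binom{n}{v}\cdot e^{-p^{-}sv}\cdot(2es/(cn))^{sc\log n}$, where $p^{-}\sim\log n/(2n)$ and the Chernoff factor is valid when $s\le cn/(2e)$.

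The main obstacle will be verifying that this union bound is $o(1)$ uniformly over $c\log n\le s\le v$ with $s+v\le n/2+1$. For $s\ge cn/(2e)$ I would use only (a): since $v\ge s$, we get $sv\ge c^{2}n^{2}/(4e^{2})$, and hence $e^{-p^{-}sv}\le e^{-\Omega(cn\log n)}$, which crushes the entropy $e^{O(n)}$ coming from the binomial coefficients. For $s\le cn/(2e)$ I would combine (a) and (b); the Chernoff factor alone is exponentially small in $s\log n$ with a generous base, which is easily summable for $s$ near $c\log n$. The subtle regime is $s\approx n/\log n$, where neither bound alone suffices: combining them gives decay of order $e^{-\Omega(cn\log\log n)}$, and the constraint $v\ge s$ coming from the symmetry reduction is crucial here (otherwise unconstrained optimization over $v$ would peak near $v=n/e^{2}$ and the bound would fail). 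Summing over the $O(n^{2})$ possible pairs $(s,v)$ then yields $o(1)$, completing the proof.
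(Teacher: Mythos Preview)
Your approach is essentially correct in spirit---Hall's theorem plus edge-density bounds is exactly what underlies both \cite[Lemma~4.7]{JLR} and your argument---but the union bound over triples $(S,T,V)$ with the displayed product
\[
\binom{n}{s}^{2}\binom{n}{v}\cdot e^{-p^{-}sv}\cdot\Bigl(\frac{2es}{cn}\Bigr)^{sc\log n}
\]
does \emph{not} sum to $o(1)$. The failure is at small $s$ and large $v$: take $s=c\log n$ and $v=n/3$. Then $p^{-}sv=\Theta(\log^{2}n)$ and the Chernoff factor is $e^{-\Theta(\log^{3}n)}$, but $\binom{n}{v}=e^{\Theta(n)}$, so the term is exponentially large in $n$. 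Your claim that ``the Chernoff factor alone is \ldots\ easily summable for $s$ near $c\log n$'' is only true if you do \emph{not} sum over $V$; once you have committed to a union bound over triples, the factor $\binom{n}{v}$ is present and dominates. Bounding $\Pr((a)\cap(b))\le\Pr((b))$ does not help, since $\binom{n}{v}$ remains in the sum.

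The fix is straightforward: split at, say, $s_{0}=cn/(4e)$. For $s\le s_{0}$, bound the bad event by the weaker event ``$\exists(S,T)$ with $|S|=s$ satisfying (b) at $t^{+}$'' and union-bound only over pairs $(S,T)$; for $s\ge s_{0}$, use (a) at $t^{-}$ and union-bound over $(S,V)$ with $v\ge s$. In each regime the single condition suffices, so you never need a joint estimate on $(a)\cap(b)$, and both the independence issue at different times and your ``subtle regime $s\approx n/\log n$'' disappear. This is also how the paper organises the argument: it imports from \cite[Lemma~4]{LR} two separate structural facts---every pair of disjoint sets of size $n(\log\log n)^{2}/\log n$ spans an edge, and every set of size at most $2n(\log\log n)^{2}/\log n$ induces fewer than $(\log\log n)^{3}$ times its size in edges---each holding a.a.s.\ uniformly in $t$ in the given range. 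Given these, the perfect-matching conclusion is purely deterministic via Hall, with no enumeration of triples and no need to reason about $G^{\all}(t^{-})$ and $G^{\all}(t^{+})$ jointly.
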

\begin{proof}
From \cite[Lemma~4]{LR}, we know that a.a.s.\ for all such $t$,
\begin{enumerate}
\item [(1)]for every pair of disjoint subsets of size $n\left(\log\log n\right)^{2}/\log n$,
there is an edge between them in $G^{\all}\left(t\right)$,
\item [(2)]every set $S$ of at most $2n\left(\log\log n\right)^{2}/\log n$
vertices induces fewer than $\left(\log\log n\right)^{3}\left|S\right|$
edges in $G^{\all}\left(t\right)$.
\end{enumerate}
We can then conclude the proof exactly as in \cite[Lemma~4.7]{JLR}, using Hall's theorem.
\end{proof}
The next lemma follows from \cite[Lemma~4]{LR}. Let $A=\left\{ 1,\dots,\floor{n/2}\right\} $
and $B=\left\{ \floor{n/2}+1,\dots,n\right\} $, so that $A$ and
$B$ give a fixed balanced partition of $\range n$. As in \cite[Section~3]{LR},
say a vertex is \emph{bad} if it has fewer than $\log n/200$ neighbours
in $A$ or in $B$. Say a vertex is \emph{small} if $d\left(v\right)<40$.
\begin{lem}
\label{lem:properties-A}A.a.s.\ for all $n\log n/4\le t\le n\log n$,
the following properties hold.
\begin{enumerate}
\item [(1)]$G^{\all}\left(t\right)$ has no more than $n/\log^{40}n$ bad
vertices,
\item [(2)]$G^{\all}\left(t\right)$ has no $8$ bad vertices within distance
$20$ from each other,
\item [(3)]$G^{\all}\left(t\right)$ has no $2$ small and $1$ bad vertices
within distance $20$ from each other,
\item [(4)]$G^{\all}\left(t\right)$ has maximum degree less than $6\log n$.
\end{enumerate}
\end{lem}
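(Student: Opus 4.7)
The plan is to prove each of (1)--(4) by comparison with the binomial random graph $\GG(n,p)$ for $p$ in the range $[\log n/(2n), 2\log n/n]$, via e.g.\ \cite[Proposition~1.15]{JLR}, exploiting monotonicity in $t$ wherever possible.

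Property (4) is monotone non-decreasing in $t$, so it suffices to check it at the densest time $t = n\log n$. There, each vertex has degree at least $6\log n$ with probability $n^{-\Omega(1)}$ by a Chernoff bound, and a union bound over vertices completes the argument. Property (1) is monotone non-increasing in $t$ (a vertex that acquires $\log n/200$ neighbours in both $A$ and $B$ stays good forever), so it suffices to verify (1) at $t = n\log n/4$. In $\GG(n,\log n/(2n))$ the expected number of $A$-neighbours of any vertex is $\log n/4$, and a sharp Chernoff lower-tail bound gives that the probability of being bad is at most $n^{-c}$ for some constant $c > 0$; then the expected number of bad vertices is $n^{1-c} = o(n/\log^{40}n)$, and Markov's inequality concludes.

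For (2) and (3), which are not monotone in $t$, I would use a two-phase exposure: write $G^{\all}(n\log n) = G_1 \cup G_2$ with $G_1$ and $G_2$ independent binomial graphs of densities matching $t = n\log n/4$ and the remaining gap to $t = n\log n$. Since being bad (or small) is a decreasing property while being within distance $20$ of a fixed vertex is an increasing one, any witness to violating (2) or (3) at some $t$ in the range gives a witness on vertices bad/small in $G_1$ and pairwise at distance $\le 20$ in $G_1 \cup G_2$. Conditioning on $G_1$ satisfying (1) (so there are at most $n/\log^{40}n$ bad vertices) and on $G_1 \cup G_2$ satisfying (4) (so balls of radius $20$ have size at most $(6\log n)^{20}$), we are reduced to a pure random-graph estimate in $G_1 \cup G_2$.

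The main obstacle is the Steiner-tree enumeration required for (2) and (3): any $8$ pairwise-close vertices are spanned by a Steiner tree with at most $20\cdot 7 = 140$ edges, and we must bound the expected number of such trees whose $8$ marked vertices are simultaneously bad in $G_1$. The FKG inequality applies to decouple these events (``tree is present'' is increasing, while ``marked vertex is bad'' is decreasing), and the remaining combinatorial count --- summing over tree shapes and Steiner-vertex positions, and using the Chernoff bound on the bad probability --- can be made $o(1)$ provided $c$ above is chosen sufficiently large (which can be arranged via a sharp enough Chernoff bound, since the ratio $\log n/200$ vs.\ $\log n/4$ is far below $1$). This is essentially the content of \cite[Lemma~4]{LR}, which we adapt here; the variant for (3) is analogous, with the number of small vertices controlled by an additional Chernoff estimate on the count of vertices of degree $<40$.
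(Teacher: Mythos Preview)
The paper does not give a proof of this lemma at all: it simply records that the statement ``follows from \cite[Lemma~4]{LR}''. Your proposal is a faithful sketch of how that lemma is proved---monotonicity to reduce (1) and (4) to a single time, and a two-round exposure plus Steiner-tree first-moment count with an FKG/Harris decoupling for the non-monotone properties (2) and (3)---and you explicitly acknowledge that this is ``essentially the content of \cite[Lemma~4]{LR}''; so your approach and the paper's coincide.
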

Now we can prove \cref{lem:resilient-matching}. We closely follow
the proof of \cite[Theorem~4.4]{JLR}.
\begin{proof}[Proof of \cref{lem:resilient-matching}]
A.a.s.\ For any $t_{h}^{-}\le t\le t_{h}^{+}$ the graph $G^{\all}\left(t\right)$
satisfies the properties of \cref{lem:no-cherries,lem:only-isolated,lem:PM-min-degree,lem:properties-A},
so we assume these properties hold for the remainder of the proof.
Consider any $t_{h}^{-}\le t\le t_{h}^{+}$, suppose $n-\left|W\left(t\right)\right|$
is odd, and let $G=G^{\all}\left(t\right)\setminus W\left(t\right)$.
Let $v$ be any non-dangerous vertex. We will show that $G\setminus v$
has a perfect matching.

Order the bad vertices of $G$ by degrees:
\[
d\left(v_{1}\right)\le d\left(v_{2}\right)\le\dots\le d\left(v_{\ell}\right).
\]
We greedily match these vertices one-by-one with vertices $u_{1},\dots,u_{\ell}$,
as follows. Suppose $v_{1},\dots,v_{i-1}$ are already matched with
$u_{1},\dots,u_{i-1}$ (some $v_{j}$ may be matched with some $v_{q}$,
which means $u_{j}=v_{q}$ and $u_{q}=v_{j}$). Let $V_{i-1}=\left\{ v_{1},\dots,v_{i-1}\right\} $
and $U_{i-1}=\left\{ u_{1},\dots,u_{i-1}\right\} $. Then, choose
$u_{i}$ to be an (arbitrary) vertex of smallest degree among $N_{G}\left(v_{i}\right)\setminus\left(V_{i-1}\cup U_{i-1}\cup\left\{ v\right\} \right)$.
We need to show that this set is always nonempty, so that this choice
is always possible.
\begin{itemize}
\item If $d\left(v_{i}\right)=1$ then $u_{i}$ is the unique neighbour
of $v_{i}$. By definition $u_{i}$ is dangerous, so $u_{i}\ne v$.
Since $G$ has no component of size 2, $u_{i}$ does not have degree
1. Since the $v_{i}$ are ordered by degrees, this means $u_{i}\notin V_{i-1}$.
Since $G$ has no cherry, $u_{i}\notin U_{i-1}$.
\item Suppose $d\left(v_{i}\right)=2$. Since there are no three small vertices
within distance 4 of each other, there is at most one element of $V_{i-1}$
within distance 2 of $v_{i}$. We consider three cases.
\begin{itemize}
\item If $v_{i}$ has no neighbour in $V_{i-1}\cup U_{i-1}$, then $v_{i}$
has at least one neighbour other than $v$, which is a viable choice
for $u_{i}$.
\item If $v_{i}$ has exactly one neighbour in $V_{i-1}\cup U_{i-1}$, then
some vertex in $V_{i-1}$ is within distance 2 of $v_{i}$. By definition
the other neighbour of $v_{i}$ is dangerous, so it cannot have been
chosen for $v$ and is a viable choice for $u_{i}$.
\item The remaining case is that $v_{i}$ has two neighbours in $V_{i-1}\cup U_{i-1}$. These neighbours cannot be of the form $\{v_j,v_q\}$ or $\{v_j,u_q\}$ or $\{u_j,u_q\}$ for $j\ne q$, as this would imply two different elements of $V_{i-1}$
within distance 2 of $v_{i}$. So, it actually remains to consider the case $N_{G}\left(v_{i}\right)=\left\{ v_{j},u_{j}\right\} $
for some $j<i$. This would mean that $v_{i}$ was a viable choice
for $u_{j}$ but was not chosen, which means $d\left(u_{j}\right)\le2$.
But this would give three small vertices at distance 1 of each
other, which is impossible.
\end{itemize}
\item If $3\le d\left(v_{i}\right)\le40$ then by the same reasoning as
above, $v_{i}$ has at most one neighbour in $V_{i-1}\cup U_{i-1}$.
It's possible that $v$ is also a neighbour of $v_{i}$, but there
is still at least one neighbour left for $u_{i}$.
\item If $d\left(v_{i}\right)\ge41$, then $v_{i}$ has at most 13 neighbours
in $V_{i-1}\cup U_{i-1}$, since otherwise there would be 7 bad vertices
within distance 2 of $v_{i}$ (and therefore 8 bad vertices within
distance 4 of each other). So there are plenty of neighbours left
for $u_{i}$.
\end{itemize}
We have proved that all bad vertices can be matched. After removing all
vertices matched so far (and $v$), every vertex has at least $\log n/200-17$ neighbours
in $A$ and in $B$ (no neighbourhood loses more than 17 vertices, or else there would be more than 8
bad vertices within distance 4 of each other). The remaining vertices
in $A$ and $B$ may no longer form a balanced bipartition. In order
to apply the property in \cref{lem:PM-min-degree}, we move some carefully
chosen vertices across the partition to balance it. A \emph{2-independent
set} of vertices is a set of vertices such that no two share a common
neighbour. Recalling that $G$ has maximum degree at most $6\log n$,
it has a 2-independent set of size $\Omega\left(n/\log^{2}n\right)$,
which is more than the $O\left(n/\log^{40}n\right)$ vertices we must
move to balance the bipartition. So, move a 2-independent set of vertices
to balance the partition. At most one neighbour of each vertex is moved, so the edges between the parts form a balanced
bipartite graph with minimum degree at least $\log n/200-18$. We can then apply the property in \cref{lem:PM-min-degree} to see that this graph has a perfect matching, finishing the proof.
\end{proof}

\section{\label{sec:eps}Proof of \texorpdfstring{\cref{thm:eps}}{Proposition~\ref{thm:eps}}\label{sec:eps-n}}

In this section we prove \cref{thm:eps}. Note that we may assume $k=\omega(\sqrt n)$ (otherwise we can defer to \cref{thm:o(n)}), and we may assume that $k/n\le 1/100$ (because if $k/n=\Omega(1)$ then \cref{thm:eps} is trivial). Also, note that it actually suffices
to prove that a.a.s.\ $G\left(N\right)$ has $k-f$ vertices of degree
$n-1$, for some $f=O(k^2/n)$. Indeed, since $G\left(N\right)$ is $k$-matching-free, within
the remaining $n-k+f$ vertices there is no matching of size $f$,
meaning that there must be an independent set of size $n-k+f-2\left(f-1\right)>n-k-f$.

Define $\tau$ as in \cref{subsec:o(n)-initial}. We first need a stronger version of \cref{lem:o(n)-tau-bound}.

\begin{lem}
\label{lem:eps-tau-bound}A.a.s.\ $\tau\le(1+O(k/n))k$.
\end{lem}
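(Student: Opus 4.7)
The plan is to sharpen the argument of \cref{lem:o(n)-tau-bound} by replacing Markov's inequality with Chernoff-type concentration, which becomes available precisely in the regime $k=\omega(\sqrt n)$. Before time $\tau$ every offered edge is accepted, so $G(t)=G^\all(t)$ for $t\le\tau$, and $\tau$ is simply the first time the matching number of the Erd\H os--R\'enyi process reaches $k-1$. Setting $t^{\star}=\lceil(1+\alpha k/n)k\rceil$ for a sufficiently large absolute constant $\alpha$, it therefore suffices to show that a.a.s.\ $\nu(G^\all(t^{\star}))\ge k-1$.

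To lower-bound $\nu(G^\all(t^{\star}))$ I would construct a matching greedily: process the edges $e(1),\dots,e(t^{\star})$ in order, maintaining a matching $M$, and add $e(i)$ to $M$ whenever it is vertex-disjoint from every edge already in $M$; let $X_{i}$ be the indicator that $e(i)$ is \emph{not} added. Then $\nu(G^\all(t^{\star}))\ge|M|=t^{\star}-\sum_{i\le t^{\star}}X_{i}$. Conditional on $e(1),\dots,e(i-1)$, the edge $e(i)$ is uniformly distributed on the $\binom{n}{2}-(i-1)$ remaining pairs, and since $|V(M)|\le 2(i-1)$ before step $i$,
\[
\Pr\!\left(X_{i}=1\cond e(1),\dots,e(i-1)\right)\le\frac{2(i-1)(n-1)}{\binom{n}{2}-(i-1)}\le(1+o(1))\frac{4(i-1)}{n}.
\]

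Because this bound is deterministic, a standard coupling shows that $\sum_{i\le t^{\star}}X_{i}$ is stochastically dominated by a sum $Y$ of independent Bernoullis with total mean $\mu\le(1+o(1))\cdot 2(t^{\star})^{2}/n\le(4+o(1))k^{2}/n$. The hypothesis $k=\omega(\sqrt n)$ forces $\mu=\omega(1)$, so the multiplicative Chernoff bound yields $\Pr(Y\ge 2\mu)\le e^{-\mu/3}=o(1)$. Hence a.a.s.\ $|M|\ge t^{\star}-2\mu\ge k+(\alpha-8-o(1))k^{2}/n\ge k-1$ provided $\alpha$ is a sufficiently large absolute constant, which yields $\tau\le t^{\star}=(1+O(k/n))k$ as required.

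The main obstacle is getting an asymptotically vanishing failure probability rather than merely a bounded one. Markov's inequality applied to the expected wasted count $O(k^{2}/n)$ only gives constant-bounded failure probability; this is exactly what suffices for the weaker bound $\tau\le 2k$ in \cref{lem:o(n)-tau-bound}, but it falls short of the sharper $(1+O(k/n))k$ needed here. Chernoff succeeds only because $k=\omega(\sqrt n)$ forces $\mu=\omega(1)$, which is why the complementary regime $k=O(\sqrt n)$ has to be handled separately via \cref{thm:o(n)}.
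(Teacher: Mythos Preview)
Your proposal is correct and follows essentially the same approach as the paper: bound the probability that each step fails to extend the greedy matching, stochastically dominate the number of failures by a sum of independent Bernoullis, and apply Chernoff (which is enabled precisely by the assumption $k=\omega(\sqrt n)$). The only cosmetic difference is that the paper uses a uniform bound $9k/n$ on each step's failure probability and dominates by a single binomial $\Bin(q,9k/n)$, whereas you track the varying bound $4(i-1)/n$; this changes the constants but not the argument.
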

\begin{proof}
Let $q=(1+11k/n)k$. We proceed in basically the same way as \cref{lem:o(n)-tau-bound}. For each $t\le q$, $G^\all(t-1)$ has at most $q$ edges, comprising at most $2k$ vertices. The probability that $e(t)$ does not increase the matching number is at most the probability that it intersects the edges of $G^\all(t-1)$, which is at most $2qn/\left(\binom{n}{2}-q\right)\le 9k/n$. The number of such steps is stochastically dominated by the binomial distribution $\Bin(q,9k/n)$, which has expectation $9qk/n\le 10k^2/n$. So, recalling the assumption that $k^2/n=\omega(1)$, by the Chernoff bound a.a.s.\ there are at most $11k^2/n$ steps among the first $q$ that do not increase the matching number. It follows that $\tau\le q$.
\end{proof}

Now, condition on an outcome of $\tau,e(1),\dots,e(\tau)$ such that $\tau\le(1+O(k/n))k$, and recall the definitions of $M,A,B$ and augmenting paths from \cref{subsec:o(n)-initial}. In $G^\all(\tau)$, note that there are only $O(k^2/n)$ edges not in $M$, and each such edge can be incident to at most two edges of $M$. Therefore, $(1-O(k/n))k$ of the $k-1$ edges of $M$ are isolated in $G^\all(\tau)$. Let $M'$ be the sub-matching of such edges.
For each $e\in M'$, let $E_e$ be the event that we are offered an edge between  some $b\in B$ and some endpoint $a^+_e$ of $e$ (let $a^-_e$ be the other endpoint of $e$) before we are ever offered any edges between $e$ and $A$, and then we are offered another edge between $a^+_e$ and $B$ before we are offered the edge $\{a^-_e,b\}$ or any further edges between $e$ and $A$. There are $2|B|$ possible edges between $e$ and $B$, out of $2(n-2)$ possible edges involving $e$ other than $e$ itself. After $\{a^+_e,b\}$ has been revealed, there are $|B|-1$ possible further edges between $a^+_e$ and $B$, out of a total of $n-3$ possible further edges involving $a^+_e$. There are also $|A|-2$ edges between $a^-_e$ and $A$. So,
\begin{equation}
\Pr(E_e)=\frac{2|B|}{2(n-2)}\cdot\frac{|B|-1}{(n-3)+1+(|A|-2)}=1-O\left(\frac kn\right).\label{eq:Ee}
\end{equation}
Now, consider distinct $e,e'\in M'$. We will show that $E_e$ and $E_{e'}$ are essentially independent. For a possible edge $f\subseteq A$, let $Q^e_f$ be the event that $f$ is not offered until three edges have already been offered between $e$ and $B$ (this means that $f$ is not offered until $E_e$ has already been determined). Recalling $|B|=\Theta(n)$, we have $$\Pr(Q_f^e)=\frac{2|B|}{2|B|+1}\cdot\frac{2|B|-1}{2|B|}\cdot\frac{2|B|-2}{2|B|-1}=1-O\left(\frac1n\right).$$
Now, let $Q$ be the intersection of the events $Q_f^e\cap Q_f^{e'}$ for each of the four possible edges $f$ between $e$ and $e'$. If $Q$ holds, then none of these four edges are offered until $E_e$ and $E_{e'}$ have already been determined. By the union bound, $\Pr(Q)=1-O(1/n)$.  Let $E_e'$ and $E_{e'}'$ have the same definitions as the events $E_e$ and $E_{e'}$, but ignoring all edges between $e$ and $e'$.  This means that $E_e'$ and $E_{e'}'$ are independent. Now, note that
\[
\Pr(E_e\cap E_{e'})=O(\Pr(\overline{Q}))+\Pr(E_e\cap E_{e'}\cap Q)=O\left(\frac1n\right)+\Pr(E_e\cap E_{e'}\cap Q),
\]
and similarly $\Pr(E_e'\cap E_{e'}')=O\left(1/n\right)+\Pr(E_e'\cap E_{e'}'\cap Q)$.  Observe that $E_e\cap E_{e'}\cap Q$ is actually the same event as $E_e'\cap E_{e'}'\cap Q$, so
$$\Pr(E_e\cap E_{e'})=\Pr(E_e'\cap E_{e'}')+O\left(\frac1n\right)=\Pr(E_e')\Pr(E_{e'}')+O\left(\frac1n\right)=\Pr(E_e)^2+O\left(\frac1n\right).$$
Let $X$ be the number of edges $e\in M'$ such that $E_e$ holds. By the above calculations, $\E X=k-O(k^2/n)$ and
$$\Var X=\!\!\sum_{(e,e')\in E(M')^2}\left(\Pr(E_e\cap E_{e'})-\Pr(E_e)\Pr(E_{e'})\right)=O(k^2/n).$$
 Recalling the assumption that $k^2/n=\omega(1)$, a.a.s.\ $X\ge \E X-k^2/n$ by Chebyshev's inequality, implying that $E_e$ holds for $(1-O(k/n))k$ edges $e$ of $M'$. Now, the following lemma completes the proof.

\begin{lem}
If $E_e$ holds, then $a^+_e$ has degree $n-1$ in $G(N)$.
\end{lem}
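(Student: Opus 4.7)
The plan is to show that every edge of the form $\{a^+_e, v\}$ offered during the process is accepted, which immediately yields $\deg_{G(N)}(a^+_e) = n-1$. Such an edge is rejected only if its addition creates an $M$-augmenting path, and since $\{a^+_e, a^-_e\}$ is the unique $M$-edge at $a^+_e$, any such augmenting path must contain a segment $w - a^-_e - a^+_e - v$ with $\{w, a^-_e\}$ non-$M$. Thus it suffices to show that in every $G(t)$, there is no $M$-alternating path from $a^-_e$ to $B$ starting with a non-$M$ edge.

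First I would dispose of the two edges guaranteed by $E_e$. At the moment $\{a^+_e, b\}$ is offered, the first condition of $E_e$ ensures that no other edge incident to $e$ has appeared yet, so $a^-_e$ has no non-$M$ neighbors and no augmenting path can form. Between that moment and the offer of $\{a^+_e, b'\}$, the second condition of $E_e$ permits only edges of the form $\{a^-_e, b''\}$ with $b'' \in B \setminus \{b\}$ to be offered; each such edge is rejected because it creates the augmenting path $b'' - a^-_e - a^+_e - b$. Hence $a^-_e$ is still pendant when $\{a^+_e, b'\}$ arrives, so $\{a^+_e, b'\}$ is accepted too. Let $\tau'$ denote the time right after both are accepted.

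The heart of the argument is to introduce two auxiliary $(k-1)$-matchings,
\[
\hat M = (M \setminus \{\{a^+_e, a^-_e\}\}) \cup \{\{a^+_e, b\}\}, \qquad \hat M' = (M \setminus \{\{a^+_e, a^-_e\}\}) \cup \{\{a^+_e, b'\}\}.
\]
Both lie in $G(\tau')$ and therefore in every $G(t)$ with $t \ge \tau'$. Since the process preserves the matching number at exactly $k-1$, each of $\hat M, \hat M'$ is a maximum matching in every such $G(t)$ and so neither admits an augmenting path. The vertices exposed by $\hat M$ are $(B \setminus \{b\}) \cup \{a^-_e\}$, and those exposed by $\hat M'$ are $(B \setminus \{b'\}) \cup \{a^-_e\}$.

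The main obstacle is then a short case analysis. Suppose, for contradiction, that an $M$-alternating path $P$ from $a^-_e$ to some $b_2 \in B$, starting with a non-$M$ edge, exists in some $G(t)$. First, $P$ cannot use $\{a^+_e, a^-_e\}$, since that would force the first edge of $P$ at $a^-_e$ to be the unique $M$-edge at $a^-_e$. A parity check rules out $P$ using both $\{a^+_e, b\}$ and $\{a^+_e, b'\}$, because these two non-$M$ edges at $a^+_e$ would have to occupy adjacent positions in $P$, which must alternate. If $P$ avoids $\{a^+_e, b\}$, then $P$ is also $\hat M$-alternating, and the non-existence of $\hat M$-augmenting paths forces $b_2$ to be the unique $B$-vertex matched in $\hat M$, namely $b_2 = b$. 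If instead $P$ uses $\{a^+_e, b\}$, the parity of this non-$M$ edge combined with $b$ having no $M$-edge forces $P$ to terminate at $b$, so $b_2 = b$ again. The symmetric argument via $\hat M'$ yields $b_2 = b'$, contradicting $b \ne b'$. Hence no such $P$ exists, every $\{a^+_e, v\}$ is accepted, and $a^+_e$ has degree $n-1$ in $G(N)$.
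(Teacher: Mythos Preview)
Your proof is correct. Both your argument and the paper's hinge on the same key observation: once $a^+_e$ has two accepted edges to $B$, any hypothetical $M$-augmenting path through $\{a^+_e,v\}$ already yields a forbidden augmenting structure in $G(t-1)$. The paper implements this via direct path surgery: it takes the augmenting path, keeps the portion up through the edge $e=\{a^-_e,a^+_e\}$, and replaces everything beyond $a^+_e$ by the single edge $\{a^+_e,b_i\}$ for whichever $b_i\in\{b,b'\}$ differs from the far endpoint; this yields an $M$-augmenting path already in $G(t-1)$, contradicting maximality of $M$. You instead introduce the shifted matchings $\hat M,\hat M'$ and argue that the portion of the path from $a^-_e$ to $B$ is $\hat M$- (or $\hat M'$-) augmenting. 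These are equivalent viewpoints---your $\hat M$-augmenting path is exactly the paper's surgically shortened $M$-augmenting path with the final edge $\{a^+_e,b\}$ absorbed into the matching. Your version is somewhat longer because of the case split on whether $P$ uses $\{a^+_e,b\}$ or $\{a^+_e,b'\}$; in fact those cases are vacuous, since once you have shown $P$ avoids $\{a^+_e,a^-_e\}$ (the unique $M$-edge at $a^+_e$), the vertex $a^+_e$ cannot appear on the alternating path $P$ at all, and both $\hat M,\hat M'$ arguments apply simultaneously.
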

\begin{proof}
If $E_e$ holds, the first edge between $a^+_e$ and $B$ will be accepted, all subsequent edges between the other endpoint $a^-_e$ of $e$ and $B$ will be rejected (because they would create a length-3 augmenting path), and the next edge between $a^+_e$ and $B$ will be accepted (at this point the connected component of $e$ will then be a 3-edge star, involving two vertices $b_1,b_2\in B$). Now, suppose for the purpose of contradiction that some further edge $e(t)$ involving $a^+_e$ is rejected. This means that $e(t)$ would introduce an augmenting path $P$ starting at some vertex in $B$, passing through $e$ and $e(t)$ consecutively, then ending at some vertex $b^*$ in $B$. Without loss of generality suppose $b_1\ne b^*$; but then $G(t-1)$ already had an augmenting path, obtained by replacing the portion of $P$ after $e$ by the edge $\{a^+_e,b_1\}$. This is a contradiction.
\end{proof}

\section{\label{sec:not-extremal}Proof of \texorpdfstring{\cref{thm:not-extremal}}{Proposition~\ref{thm:not-extremal}}}

First, claim (1) will be an immediate consequence
of two simple lemmas. 
\begin{lem}
A.a.s.\ $G\in\GG\left(n,6n\right)$
has no independent set of size $n/3$.
\end{lem}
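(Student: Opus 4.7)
The plan is a direct first-moment union bound over all $\binom{n}{\lceil n/3\rceil}$ candidate independent sets. For a fixed set $S$ of $\lceil n/3\rceil$ vertices, setting $N=\binom{n}{2}$ and $s=\binom{\lceil n/3\rceil}{2}$, the probability that $S$ is independent in $\GG(n,6n)$ equals $\binom{N-s}{6n}/\binom{N}{6n}$, which is bounded above by $(1-s/N)^{6n}\le \exp(-6ns/N)$. A short calculation gives $6ns/N=\tfrac{2}{3}n-O(1)$, so this probability is at most $e^{-(2/3)n+O(1)}$.

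Multiplying by the number of choices of $S$ and applying the sharp entropy bound $\binom{n}{\lceil n/3\rceil}\le 2^{H(1/3)n}$ with $H(1/3)=\log_2 3-\tfrac{2}{3}$, the expected number of independent sets of size $\lceil n/3\rceil$ in $\GG(n,6n)$ is at most $\exp\bigl((\log 3-\tfrac{2}{3}\log 2-\tfrac{2}{3})n+O(1)\bigr)$. Since $\log 3-\tfrac{2}{3}\log 2\approx 0.636<\tfrac{2}{3}$, this quantity is $e^{-\Omega(n)}$, and Markov's inequality finishes the proof.

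The only real obstacle is that the computation is numerically tight: the crude estimate $\binom{n}{\lceil n/3\rceil}\le(3e)^{n/3}$ is too weak, because $(3e)^{1/3}>e^{2/3}$, so one must use the entropy bound (equivalently, Stirling directly). Apart from this, the argument is completely routine. A purely cosmetic variant would be to transfer from $\GG(n,12/n)$ via \cite[Proposition~1.15]{JLR}, but this gives the same numerical content and is no simpler.
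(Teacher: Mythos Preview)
Your proof is correct and essentially identical to the paper's: both are first-moment arguments requiring the sharp entropy/Stirling bound on $\binom{n}{n/3}$ to beat the $e^{-2n/3}$ probability factor, and you correctly identify that the crude $(en/k)^k$ bound is too weak here. The only cosmetic difference is that the paper first transfers to $\GG(n,p)$ with $p=6n/\binom{n}{2}$ (via \cite[Corollary~1.16]{JLR}) before doing the same computation, exactly the variant you mention and dismiss at the end.
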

\begin{proof}
Let $p=6n/\binom{n}{2}$; using say \cite[Corollary~1.16]{JLR} it
suffices to show that $G'\in\GG\left(n,p\right)$ has the required
property. Let $X$ be the number of independent sets of size $n/3$ in $G'$; then using Stirling's approximation we have
$$\E X=\binom{n}{n/3}(1-p)^{\binom{n/3}{2}}\le \exp \left(-\left(\frac 1 3 \log{\frac 1 3}+\frac 2 3 \log{\frac 2 3}+\frac 23+o(1)\right)n\right)=o(1),$$ so the desired result follows from Markov's inequality.
\end{proof}
\begin{lem}
A.a.s.\ the largest matching in $G\in\GG\left(n,6n\right)$
has size at most $n/2-e^{-13}n/2$.
\end{lem}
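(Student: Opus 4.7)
My plan is to bound the matching number by counting isolated vertices: since an isolated vertex cannot be incident to any edge of a matching, if $I$ denotes the number of isolated vertices of $G$, then the maximum matching in $G$ has size at most $(n-I)/2$. Thus it suffices to show that a.a.s.\ $G\in\GG(n,6n)$ has at least $e^{-13}n$ isolated vertices.

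First I would pass to the binomial model. Since ``having at least $e^{-13}n$ isolated vertices'' is monotone (decreasing) in the number of edges, by \cite[Corollary~1.16]{JLR} (as in the previous lemma) it suffices to establish this for $G'\in\GG(n,p)$, where $p=12/(n-1)$ is chosen so that $\binom{n}{2}p=6n$.

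Next I would apply a standard second moment argument to the number of isolated vertices $I=\sum_v X_v$ of $G'$, where $X_v$ is the indicator that $v$ is isolated. The expectation is $\E I=n(1-p)^{n-1}=(1+o(1))e^{-12}n$. A direct computation gives $\operatorname{Cov}(X_v,X_w)=(1-p)^{2n-3}-(1-p)^{2(n-1)}=p(1-p)^{2n-3}=O(e^{-24}/n)$ for $v\ne w$, so $\Var(I)=O(n)=o((\E I)^2)$. Chebyshev's inequality then yields that a.a.s.\ $I\geq (1-o(1))e^{-12}n$, which exceeds $e^{-13}n$ for large $n$ because $e^{-12}>e^{-13}$.

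There is no real obstacle: this is a routine second-moment calculation, entirely analogous to the preceding lemma about independent sets. The only observation to make is that the constant $e^{-12}$ arising from $\E I$ leaves a comfortable factor of $e$ of slack above the target $e^{-13}$, which easily absorbs the $o(1)$ losses from the asymptotics.
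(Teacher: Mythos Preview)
Your proposal is correct and follows essentially the same approach as the paper: both transfer to the binomial model $\GG(n,p)$ with $p=6n/\binom n2=12/(n-1)$, show by a second-moment computation that a.a.s.\ there are at least $e^{-13}n$ isolated vertices (using $\E I=(1+o(1))e^{-12}n$ and $\Var I=o((\E I)^2)$), and conclude that any matching misses these vertices. Your write-up is in fact slightly more explicit about the covariance calculation and about why the monotonicity justifies the model transfer, but the argument is the same.
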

\begin{proof}
Let $p=6n/\binom{n}{2}$; it suffices to show that $G'\in\GG\left(n,p\right)$
has the required property. Let $X$ be the number of isolated vertices
in $G'$; then we compute $\E X=n\left(1-p\right)^{n-1}=\left(1-o\left(1\right)\right)e^{-12}n=\Omega\left(n\right)$,
whereas one can check that $\Var X=o\left(n^{2}\right)=o\left(\left(\E X\right)^{2}\right)$. So,
a.a.s.\ $G'$ has $e^{-13}n$ isolated vertices, which
cannot contribute to a matching.
\end{proof}
Recall the definition of $\tau$ from \cref{sec:o(n),sec:eps}; we have proved
that for $k\ge n/2-e^{-13}n/2$, a.a.s.\ $G^{\all}\left(\tau\right)=G\left(\tau\right)\subseteq G\left(N\right)$
has no independent set of size $n/3=n-k-\Omega\left(n\right)$, proving (1).

Now we consider claim (2). Say a 2-path in a graph is \emph{isolated}
if there are no edges between the 2-path and the rest of the graph.
\begin{lem}
Consider any constants $0<R_{1}<R_{2}$. Then a.a.s.\ each $G^{\all}\left(t\right)$,
for $R_{1}n\le t\le R_{2}n$, has $\Omega\left(n\right)$ isolated
2-paths.
\end{lem}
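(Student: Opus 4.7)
The plan is to exhibit, for each realisation of the process, $\Omega(n)$ \emph{persistent} isolated 2-paths: triples $\{u,v,w\}$ with designated middle vertex $v$ such that the edges $uv,vw$ are offered among the first $R_1n$ edges, while $uw$ and all $3(n-3)$ edges from $\{u,v,w\}$ to $[n]\setminus\{u,v,w\}$ are offered only after time $R_2n$. Any such triple is an isolated 2-path in $G^{\all}(t)$ for \emph{every} $R_1n\le t\le R_2n$ simultaneously, so a lower bound on their number yields the conclusion uniformly in $t$.

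To enable a standard second-moment calculation I would mimic the two-phase coupling used for \cref{lem:isolated-triangle}: fix small $\varepsilon>0$, set $p_1=2(R_1-\varepsilon)/n$ and $p_2=2(R_2-R_1+2\varepsilon)/n$, and take independent $G_1\in\GG(n,p_1)$ and $G_2\in\GG(n,p_2)$. By the Chernoff bound, a.a.s.\ $|G_1|<R_1n<R_2n<|G_1\cup G_2|$, so $G_1,G_2$ and $(G^{\all}(t))_t$ can be coupled with a.a.s.\ $G_1\subseteq G^{\all}(R_1n)$ and $G^{\all}(R_2n)\subseteq G_1\cup G_2$. It therefore suffices to show that a.a.s.\ there are $\Omega(n)$ triples that form a 2-path in $G_1$ whose vertex set is isolated in $G_1\cup G_2$.

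Let $Y$ count such triples, and write $q=(1-p_1)(1-p_2)=1-\Theta(1/n)$ for the probability that a given edge is absent from $G_1\cup G_2$. Then
\[
\E Y = 3\binom{n}{3}\,p_1^{2}\,q^{3n-8}=\Theta(n).
\]
The key observation for the second moment is that two distinct persistent isolated 2-paths must be vertex-disjoint: if they shared a vertex then, by chasing inclusions, isolation of one 2-path would force the other to lie on the same 3-vertex set and in fact share the same pair of edges. Hence only disjoint pairs contribute to $\E[Y(Y-1)]$, and
\[
\E[Y(Y-1)]=9\binom{n}{3}\binom{n-3}{3}\,p_1^{4}\,q^{6n-25}=(1+o(1))(\E Y)^{2},
\]
the shift in the exponent of $q$ accounting for the $9$ cross-edges between the two 3-sets that would otherwise be double-counted as forbidden. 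Chebyshev's inequality then yields $Y=\Omega(n)$ a.a.s.

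The argument presents no substantive obstacle. The only care required is the routine bookkeeping of which edges must be present in $G_1$ and which must be absent from $G_1\cup G_2$ when writing $\E Y$ and $\E[Y(Y-1)]$, and verifying that vertex-overlap forces equality of persistent 2-paths (so that $\E[Y(Y-1)]/(\E Y)^2\to 1$ uniformly). Conceptually, the whole point of working with persistent 2-paths rather than with isolated 2-paths at a single time is that a single second-moment argument at time $R_2n$ certifies the desired bound uniformly across the entire window.
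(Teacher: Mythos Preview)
Your argument is correct and takes a genuinely different route from the paper. The paper fixes a single $t$, transfers to $\GG(n,p)$ via Pittel's inequality, and then shows the number $X$ of isolated 2-paths satisfies $\Pr(X\le \E X/2)\le e^{-\Omega(n)}$ using a Bernstein-type concentration inequality (changing one edge alters $X$ by at most $2$); this exponential tail is strong enough to union-bound over the $O(n)$ values of $t$ in the window. Your approach instead counts \emph{persistent} isolated 2-paths---those whose two path-edges appear in $G_1$ and whose remaining $3n-8$ incident edges are absent from $G_1\cup G_2$---so that a single a.a.s.\ lower bound on $Y$ certifies the conclusion simultaneously for every $t$. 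The payoff of your route is that only the second moment method is needed (no Bernstein-type inequality, no Pittel transfer), and the observation that two persistent 2-paths sharing a vertex must coincide makes the variance calculation clean. The payoff of the paper's route is that it avoids the two-phase coupling and works directly at each $t$, at the cost of heavier concentration tools; it would also extend more readily if one wanted, say, an $o(1/n^c)$ failure probability for larger $c$. Both are valid; yours is more elementary and nicely reuses the two-round coupling already set up in \cref{lem:isolated-triangle}.
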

\begin{proof}
Consider $R_{1}n\le t\le R_{2}n$. To use the union bound, it suffices to show that $G^{\all}(t)$ has the required property with probability $1-o\left(1/n\right)$. Let $p=t/{n \choose 2}$; by Pittel's inequality (see \cite[Section~1.4]{JLR}), it actually suffices to show this for $G'\in\GG\left(n,p\right)$, instead. Let $X$
be the number of isolated 2-paths in $G'$, so that
\[
\E X=3{n \choose 3}p^{2}\left(1-p\right)^{3\left(n-3\right)}=\Omega\left(n\right).
\]
Observe that changing the status of any edge changes the value of
$X$ by at most 2, so by a Bernstein-type concentration inequality (see for example \cite[Theorem~2.11]{Kwa}),
\[
\Pr\left(X\le\E X/2\right)\le \exp\left(-\frac{(\E X/2)^2}{16t+2\E X}\right)=e^{-\Omega\left(n\right)},
\]
as desired.
\end{proof}
Let $R_{1}=(k-1)/n=\Theta\left(1\right)$, and again recall the definition of $\tau$ from \cref{sec:o(n),sec:eps}. Using \cref{lem:n-tau-bound}, we know
that there is $R_{2}$ such that a.a.s.\ $\tau\le R_{2}n$, and trivially (as remarked in \cref{sec:o(n)}) we have $\tau \ge k-1=R_{1}n$. It follows that that a.a.s.\ $G^{\all}\left(\tau\right)=G\left(\tau\right)$
has $\Omega\left(n\right)$ isolated 2-paths. For any maximum $\left(k-1\right)$-matching
in $G\left(\tau\right)$, there is at least one vertex in every isolated
2-path which does not contribute to that matching, so there are a.a.s.\ $2\left(k-1\right)+\Omega\left(n\right)$ non-isolated vertices. This proves claim (2).

\section{Concluding remarks}

In this paper we studied the random greedy $k$-matching-free process,
in which edges are iteratively added to an empty graph, each chosen
uniformly at random subject to the restriction that no $k$-matching
is formed. We discovered that if $k=o\left(n\right)$ or $n-2k=o\left(\sqrt{n}/\log n\right)$
then this process is likely to produce an extremal $k$-matching-free
graph. We also made a first step exploring the intermediate regime, but here there is much more work to be done. In particular, \cref{thm:not-extremal} says that there is a range of values of $k$ for which the outcome of the $k$-matching-free process is likely to be far from an
extremal graph; we wonder whether these random graphs have interesting properties that may be useful for other problems.

We also hope that the ideas in this paper may be useful for studying
other related kinds of random processes. For example, it is natural
to ask about the $k$-path-free process, or the restricted-girth process
where we greedily add edges keeping the girth above some value $k$.
(The restricted-girth process has already been studied for fixed $k$
by Osthus and Taraz \cite{OT}; see also the work of Bayati, Montanari
and Saberi \cite{BMS} on a slightly different process.) There are
also natural generalisations of these processes to hypergraphs. In
particular, define the \emph{$\left(-2\right)$-girth }of a 3-uniform
hypergraph to be the smallest integer $g\ge4$ such that there is
a set of $g$ vertices spanning at least $g-2$ edges. Erd\H os \cite{Erd73}
asked in 1973 whether there are hypergraphs with large girth and quadratically
many edges; in an earlier version of this paper we suggested that analysis of a hypergraph generalisation of the restricted-girth
process might lead to progress on this question. Since that time, Glock, K\"uhn, Lo and Osthus \cite{GKLO} managed to prove Erd\H os' conjecture in precisely this way. (The authors also mention that the same result was independently proved by Bohman and Warnke).

Finally, we remarked in the introduction that despite the matching number and vertex cover number of a graph being very closely related to each other, the restricted covering process exhibits quite trivial behaviour compared to the matching-free process. Perhaps it would be interesting to explore more closely the relationship between these two parameters by considering random graph models where the vertex cover number and matching number are constrained to be equal (for example, one could consider a random process where edges are added as long as they do not separate the vertex cover number and matching number).

\providecommand{\bysame}{\leavevmode\hbox to3em{\hrulefill}\thinspace}
\providecommand{\MR}{\relax\ifhmode\unskip\space\fi MR }
\providecommand{\MRhref}[2]{%
  \href{http://www.ams.org/mathscinet-getitem?mr=#1}{#2}
}
\providecommand{\href}[2]{#2}

\end{document}